\newcommand{\urltilde}{\kern -.15em\lower .7ex\hbox{~}\kern .04em}  
\newcommand{\diag}{\mathop{\mathrm{diag}}\nolimits}
\newcommand{\lie}{\mathop{\mathrm{Lie}}\nolimits}
\newcommand{\Ad}{\mathop{\mathrm{Ad}}\nolimits}
\newcommand{\ad}{\mathop{\mathrm{ad}}\nolimits}
\newcommand{\im}{\mathop{\mathrm{Im}}\nolimits}
\newcommand{\In}{\mathop{\mathrm{In}}\nolimits}
\newcommand{\I}{\mathop{\mathrm{I}_n}\nolimits}
\newcommand{\rev}{\mathop{\mathrm{rev}}\nolimits}
\newcommand{\lex}{\mathop{\mathrm{lex}}\nolimits}
\newcommand{\spec}{\mathop{\mathrm{Spec}}\nolimits}  
\newcommand{\wt}{\mathop{\mathrm{wt}\,}\nolimits}
\newcommand{\tr}{\mathop{\mathrm{tr}}\nolimits}
\newcommand{\FHilb}{\mathop{\mathrm{FHilb}}\nolimits}
\newcommand{\Hilb}{\mathop{\mathrm{Hilb}}\nolimits}
\newcommand{\End}{\mathop{\mathrm{End}}\nolimits}
\newif\if@borderstar
   \def\bordermatrix{\@ifnextchar*{%
       \@borderstartrue\@bordermatrix@i}{\@borderstarfalse\@bordermatrix@i*}%
   }
   \def\@bordermatrix@i*{\@ifnextchar[{\@bordermatrix@ii}{\@bordermatrix@ii[()]}}
   \def\@bordermatrix@ii[#1]#2{%
   \begingroup
     \m@th\@tempdima8.75\p@\setbox\z@\vbox{%
       \def\cr{\crcr\noalign{\kern 2\p@\global\let\cr\endline }}%
       \ialign {$##$\hfil\kern 2\p@\kern\@tempdima & \thinspace %
       \hfil $##$\hfil && \quad\hfil $##$\hfil\crcr\omit\strut %
       \hfil\crcr\noalign{\kern -\baselineskip}#2\crcr\omit %
       \strut\cr}}%
     \setbox\tw@\vbox{\unvcopy\z@\global\setbox\@ne\lastbox}%
     \setbox\tw@\hbox{\unhbox\@ne\unskip\global\setbox\@ne\lastbox}%
     \setbox\tw@\hbox{%
       $\kern\wd\@ne\kern -\@tempdima\left\@firstoftwo#1%
         \if@borderstar\kern2pt\else\kern -\wd\@ne\fi%
       \global\setbox\@ne\vbox{\box\@ne\if@borderstar\else\kern 2\p@\fi}%
       \vcenter{\if@borderstar\else\kern -\ht\@ne\fi%
         \unvbox\z@\kern-\if@borderstar2\fi\baselineskip}%
         \if@borderstar\kern-2\@tempdima\kern2\p@\else\,\fi\right\@secondoftwo#1 $%
     }\null \;\vbox{\kern\ht\@ne\box\tw@}%
   \endgroup
   }
\newtheorem{theorem}{Theorem}[section] 
\newtheorem{proposition}[theorem]{Proposition}
\newtheorem{lemma}[theorem]{Lemma}
\newtheorem{example}[theorem]{Example}
\newtheorem{definition}[theorem]{Definition}
\newtheorem{remark}[theorem]{Remark} 
\newtheorem{corollary}[theorem]{Corollary}
\newtheorem{notation}[theorem]{Notation}
\newtheorem{weight}[theorem]{Remark (Weight Function)}
\newtheorem{monomial}[theorem]{Weighted Monomial Order}
\newtheorem{coordinatering}[theorem]{Systematic Procedure (off-diag coords of $\mathbb{C}[\mu^{-1}(0)^{rss}]$)}
\begin{document}

\title[The regular semisimple locus of the affine quotient by the Borel]{The regular semisimple locus of the affine quotient of the cotangent bundle of the Grothendieck-Springer resolution}  
\author{Mee Seong Im} 
\keywords{Hamiltonian reduction of an enhanced Borel subalgebra, Grothendieck-Springer resolutions, moment maps, regular semisimple locus, generalized almost-commuting varieties, Hilbert schemes for nonreductive groups}
\address{Department of Mathematics, University of Illinois at Urbana-Champaign, Urbana, IL 61801 USA}
\email{mim2@illinois.edu}  
\address{Department of Mathematical Sciences, United States Military Academy, West Point, NY 10996 USA}
\email{meeseongim@gmail.com} 
\date{\today}

\begin{abstract}  
Let $G= GL_n(\mathbb{C})$, the general linear group over the complex numbers,  and let $B$ be the set of invertible upper triangular matrices in $G$.  
Let $\mathfrak{b}=\lie(B)$.   
For $\mu:T^*(\mathfrak{b}\times \mathbb{C}^n)\rightarrow \mathfrak{b}^*$, where $\mathfrak{b}^*\cong \mathfrak{g}/\mathfrak{u}$ and $\mathfrak{u}$ being strictly upper triangular matrices in $\mathfrak{g}=\lie(G)$, 
we prove 
that the Hamiltonian reduction $\mu^{-1}(0)^{rss}/\!\!/B$ of the extended regular semisimple locus  $\mathfrak{b}^{rss}$ of the Borel subalgebra is smooth, affine, reduced, and 
scheme-theoretically isomorphic to a dense open locus of $\mathbb{C}^{2n}$. 
We also show that the $B$-invariant functions on the regular semisimple locus of the Hamiltonian reduction of $\mathfrak{b}\times \mathbb{C}^n$ 
arise as the trace of a certain product of matrices. 
\end{abstract}   

\subjclass[2000]{Primary 14L40, 14Q15; Secondary 14D21, 14C05}

\maketitle

\bibliographystyle{amsalpha}  
 
\setcounter{tocdepth}{1}

\section{Introduction}\label{section:introduction}

Grothendieck-Springer resolutions and Springer resolutions, which can be defined for any semisimple Lie algebra or reductive algebraic group\footnote{The Springer resolution for an algebraic group is defined in a similar manner (cf. \cite{Springer69}): letting $U$ to be the set of unipotent elements in $G$ and $\mathcal{B}$ to be the set of all Borel subgroups of $G$, the Springer resolution of $U$ is 
$\widetilde{U}=\{ (u,B)\in U\times \mathcal{B}: u \in B\}$. Since $\mathcal{B}$ can be endowed with a structure of a variety such that $\mathcal{B}\cong G/B$ for any Borel subgroup $B$, $\mathcal{B}$ is irreducible, proper, smooth, and homogeneous under the $G$-action.},  
are fundamental and important objects in representation theory and algebraic geometry, and we give their constructions as follows.  
Throughout this manuscript, 
we will restrict to the setting when $G$ is the general linear group $GL_n(\mathbb{C})$ over the complex numbers. 
Let $G$ act on its Lie algebra $\mathfrak{g}:=\mathfrak{gl}_n=\lie(G)$ by conjugation, and let   
$\psi: \mathfrak{g}\twoheadrightarrow\mathfrak{g}/\!\!/G =\spec(\mathbb{C}[\tr(r),\ldots,\det(r)])=\mathbb{C}^n$ be the adjoint quotient map. 
All elements $r\in \mathfrak{g}$ such that $\psi(r)=0$ 
are those matrices whose characteristic polynomial is of the form $p_r(t)=t^n$. This implies that $r$ is a nilpotent matrix, and writing 
$\mathcal{N}:=\psi^{-1}(0)$, the preimage $\mathcal{N}$ of zero under $\psi$ is called the nilpotent cone;  
it is a normal, reduced, closed subvariety of $\mathfrak{g}$ (cf. \cite{Kostant63}). 
Let $B$ be the set of invertible upper triangular matrices in $G$ and let  
$\mathfrak{b}=\lie(B)$. 
Let $G/B$ be the flag variety parameterizing Borel subalgebras in $\mathfrak{g}$.  
Then writing 
$\widetilde{\mathfrak{g}} := G\times_B \mathfrak{b} =\{ (x,\mathfrak{b})\in \mathfrak{g}\times G/B:x\in \mathfrak{b} \}$
and 
$\widetilde{\mathcal{N}} := G\times_B \mathfrak{n}= \{ (x,\mathfrak{b})\in \mathcal{N}\times G/B: x\in \mathfrak{b} \}$, 
we have a commuting diagram 
\[ \xymatrix{
\widetilde{\mathfrak{g}} \ar@{->>}[rr]^{\pi_1} & & \mathfrak{g} \ar@{->>}[rr]^{\psi} & & \mathfrak{g}/\!\!/G = \mathbb{C}^n  \\  
& & & & \\ 
\widetilde{\mathcal{N}}  \ar@{^{(}->}[uu]  \ar@{->>}[rr]^{p_1}   & & \mathcal{N} \ar@{^{(}->}[uu]  \ar@{->>}[rr]^{\phi} & & \mathcal{N}/\!\!/G  = \{ 0\}, \ar@{^{(}->}[uu]       \\
} 
\] 
where 
$\widetilde{\mathcal{N}}  \stackrel{p_1}{\twoheadrightarrow} \mathcal{N}$ is called the Springer resolution of nilpotent elements in $\mathfrak{g}$ while 
$\widetilde{\mathfrak{g}} \stackrel{\pi_1}{\twoheadrightarrow}\mathfrak{g}$ is called the Grothendieck-Springer resolution for the Lie algebra $\mathfrak{g}$ (cf. \cite{MR0442103}, \cite{Steinberg74} \cite{Steinberg76}), with $\widetilde{\mathcal{N}}\cong T^*(G/B)$  (cf. Chapter 3 in \cite{MR2838836}, \cite{Dol-Gold84}).  
The Springer resolution is a symplectic resolution of the singular symplectic variety $\mathcal{N}$, while, roughly speaking, the commuting diagram above exhibits $\tilde{\mathfrak{g}} \rightarrow \mathfrak{g}$ as a versal Poisson deformation of the Springer resolution.

 An important result known as the Springer correspondence gives a bijection between irreducible representations of the Weyl group (which is the symmetric group $S_n$ of $n$ letters when $G$ is the general linear group) and unipotent conjugacy classes of $G$, which are also parametrized by the partitions of $n$ through the theory of Jordan normal forms; 
 furthermore, for a unipotent conjugacy class $\mathcal{O}$ and a fixed element $u\in \mathcal{O}$, the corresponding irreducible representation of $S_n$ is the cohomology group $H^{2\dim \mathcal{B}_u}(\mathcal{B}_u; \mathbb{Q})$, where $\mathcal{B}_u$   is the set of Borel subgroups of $G$ in the Springer resolution of the algebraic group $U$ containing $u$  
 (cf. \cite{Borho-Mac81}, Chapter 3 in \cite{MR2838836}, \cite{MR1649626}, Chapter 9 of \cite{Humphreys95}, \cite{Lusztig-Spaltenstein-85}, \cite{Spaltenstein82}, \cite{Springer78}).  
In other words, 
the Weyl group acts on cohomology groups of fibers of the Springer resolution.  Moreover,  
Steinberg and Spaltenstein, see \cite{Steinberg74} and \cite{Spaltenstein82}, study the fibers of the Springer resolution and give explicit dimension formulas for the fibers, which are now fundamental algebro-geometric properties of the Springer resolution.

One has an isomorphism $\widetilde{\mathcal{N}} \cong T^*(G/B)$ that identifies the Springer resolution with the moment map $T^*(G/B) \rightarrow \mathcal{N}\subset \mathfrak{g}\cong \mathfrak{g}^*$, where we identify $\mathfrak{g}$ and $\mathfrak{g}^*$ via the trace pairing. 
Dualize the moment map to obtain the comoment map 
$\mathfrak{g} \rightarrow \Gamma(T_{G/B}) \subset \mathbb{C}[T^*(G/B)]$, which coincides with the infinitesimal Lie algebra action $\mathfrak{g}\rightarrow T(G/B)$. 
Then 
by quantizing the infinitesimal action, we obtain a map 
$U(\mathfrak{g})\rightarrow  \mathcal{D}_{G/B}^{\mathcal{L}(\lambda)}$ 
from the universal enveloping algebra to global differential operators on the flag variety twisted by a line bundle $\mathcal{L}(\lambda)$. 
In fact, highest weight representations of $U(\mathfrak{g})$ 
are realized by $B$-equivariant $\mathcal{D}_{G/B}^{\mathcal{L}(\lambda)}$-modules on $G/B$, i.e., $\mathcal{D}^{\mathcal{L}(\lambda)}$-modules on $B\backslash G/B$. 
The map $U(\mathfrak{g}) \rightarrow \mathcal{D}_{G/B}$ and its twisted analogues provide the induction functors appearing in Beilinson-Bernstein localization (a geometric characterization) between $U(\mathfrak{g})$-modules with a given central character and $\mathcal{D}_{G/B}^{\mathcal{L}(\lambda)}$-modules on $G/B$ for generic parameters (cf. \cite{Beilinson83}, \cite{Beilinson-Bernstein-81}, \cite{Hotta-Takeuchi-Tanisaki}).


Now, there is another important object in algebraic geometry called the  
Hilbert scheme, which is a parameter space for closed subschemes of a projective scheme; it is a disjoint union of schemes corresponding to the Hilbert polynomial of the subschemes of the projective scheme.   
In particular, the Hilbert scheme $\Hilb^{n}(\mathbb{C}^2)$ of $n$ points on a complex plane has been extensively studied, for example, see  
\cite{Ginzburg-Nakajima-quivers}, \cite{MR1711344}, \cite{Nakajima16}. 
In \cite{MR1711344}, 
Nakajima gives a description of $\Hilb^{n}(\mathbb{C}^2)$ as the geometric invariant theory (GIT) quotient  
$\mu_{\text{Nak}}^{-1}(0)/\!\!/_{\det}G \cong \mu_{\text{Nak}}^{-1}(0)/\!\!/_{\det^{-1}}G $
of the $G$-equivariant moment map 
$\mu_{\text{Nak}}:T^*(\mathfrak{g}\times \mathbb{C}^n)\rightarrow \mathfrak{g}^*$, given by $(r,s,i,j)\mapsto [r,s]+ij$,  
where 
$(r,s,i,j)\in \mathfrak{g}\times \mathfrak{g}^*\times \mathbb{C}^n\times (\mathbb{C}^n)^* \cong T^*(\mathfrak{g}\times \mathbb{C}^n)$, a quadruple where $r$ and $s$ are $n\times n$ complex matrices, $i$ is a vector, and $j$ is a covector.

Let $B$ act on the vector space $\mathfrak{b}\times \mathbb{C}^n$ via  
$b.(r,i)=(brb^{-1},bi)$. This action is induced onto the cotangent bundle $T^*(\mathfrak{b}\times \mathbb{C}^n) = \mathfrak{b}\times \mathfrak{b}^*\times \mathbb{C}^n\times (\mathbb{C}^n)^*$ of $\mathfrak{b}\times \mathbb{C}^n$ as: 
\[ 
b.(r,s,i,j)= (\Ad_b(r), \Ad_b^*(s),bi, jb^{-1})= (brb^{-1},\overline{bsb^{-1}}, bi, jb^{-1}), 
\]  
 where $\mathfrak{b}^*\cong \mathfrak{g}/\mathfrak{u}$,  
$\mathfrak{u}$ is the strictly upper triangular matrices in $\mathfrak{g}$, 
and 
$\overline{v}:\mathfrak{g}^* \rightarrow \mathfrak{b}^*$ is the canonical projection map 
(the identification between the dual $\mathfrak{b}^*$ of $\mathfrak{b}$ and $\mathfrak{g}/\mathfrak{u}$ is given by the bilinear pairing $\mathfrak{b}\times \mathfrak{g}\twoheadrightarrow \mathbb{C}$, $(r,s)\mapsto \tr(rs)$ which factors through the  bilinear, nondegenerate pairing 
$\mathfrak{b}\times \mathfrak{g}/\mathfrak{u} \rightarrow \mathbb{C}$).  
 The infinitesimal action of $B$ induces the map $a:\mathfrak{b}  \rightarrow 
 \Gamma(T_{\mathfrak{b} \times \mathbb{C}^n}) \subset \mathbb{C}[T^*(\mathfrak{b} \times \mathbb{C}^n)]$ 
 which is given as $a(v)(r,i)=\frac{d}{dt}(g_t.(r,i))|_{t=0}=([v,r],vi)$, where $g_t=\exp(tv)$.
  Dualizing this map gives us 
  the moment map $\mu:T^*\left( \mathfrak{b}\times\mathbb{C}^n \right)\rightarrow \mathfrak{b}^*$,   
  where $(r,s,i,j)$ is mapped to $\ad^*_r(s) + \overline{a^*(ij)}$, where  $a:\mathfrak{g}\rightarrow\End(\mathbb{C}^n)$ is a representation of $\mathfrak{g}$ on $\mathbb{C}^n$. 
  Since $\mathfrak{g}$ is the Lie algebra of the general linear group, the pullback map $a^*:\End(\mathbb{C}^n)^*\rightarrow \mathfrak{g}^*$ is given by 
$a^*(ij)=ij$. 
Thus in our setting, $(r,s,i,j)\mapsto \overline{[r,s]+ij}$. 

In this manuscript, we study the $B$-equivariant moment map $\mu:T^*(\mathfrak{b}\times \mathbb{C}^n)\rightarrow \mathfrak{b}^*$ since there is a close relationship between $\widetilde{\mathfrak{g}}$ and the $B$-moment map $\mu$, which we will now explain. 
Let $G$ act on $G\times \mathfrak{b}\times \mathbb{C}^n$ by 
$g.(g',r,i)=(g'g^{-1},r, gi)$ (note that if $i:G\rightarrow G$ is the inversion map, then its differential $di_{\I}:T_{\I}G\rightarrow T_{\I}G$ is given by $di_{\I}(x)=-x$). 
It induces a 
$G$-moment map $\mu_{G}: T^*(G\times \mathfrak{b}\times \mathbb{C}^n)\rightarrow \mathfrak{g}^*$, where $T^*(G\times \mathfrak{b}\times \mathbb{C}^n) \cong G\times \mathfrak{g}^*\times \mathfrak{b}\times \mathfrak{b}^*\times \mathbb{C}^n\times (\mathbb{C}^n)^*$, 
given by $(g,\theta,r,s,i,j)\mapsto -\theta + a^*(ij)$. 
There is also a $B$-action on $G\times \mathfrak{b}\times \mathbb{C}^n$, which induces 
the moment map $\mu_B:T^*(G\times \mathfrak{b}\times \mathbb{C}^n)\rightarrow \mathfrak{b}^*$ given by 
$(g,\theta,r,s,i,j)\mapsto \overline{\Ad_g^*(\theta)}+\ad_r^*(s)$. 
We define the moment map $\mu_{G\times B}:T^*(G\times \mathfrak{b}\times \mathbb{C}^n)\rightarrow \mathfrak{g}^*\times \mathfrak{b}^*$ 
as 
\begin{align*}
\mu_{G\times B}(g,\theta,r,s,i,j)=(\mu_G(g,\theta,r,s,i,j),\mu_B(g,\theta,r,s,i,j)).
\end{align*} 
So 
$\mu_{G\times B}^{-1}(0)=\{ (g,\theta,r,s,i,j)\in G\times \mathfrak{g}\times \mathfrak{b}\times \mathfrak{b}^*\times \mathbb{C}^n\times (\mathbb{C}^n)^*: \theta = ij, \overline{g\theta g^{-1}}=-\ad_r^*(s) \}$. 
Since $g\in G$ is a free parameter, apply the $G$-action so that $g=1$.  
We are now able to deduce the following (see also Proposition 3.2 and Corollary 3.3 in \cite{Nevins-GSresolutions}):  
\begin{proposition} 
The inclusion $\mu^{-1}(0)\hookrightarrow \mu_{G\times B}^{-1}(0)$ given by 
$(r,s,i,j)\mapsto (1, ij,r,s,i,j)$ induces a bijection between $B$-orbits on $\mu^{-1}(0)$ and $G\times B$-orbits on $\mu_{G\times B}^{-1}(0)$. This gives an isomorphism 
\begin{align*} 
\mu^{-1}(0)/B \cong T^*(\widetilde{\mathfrak{g}}\times \mathbb{C}^n/G)
\end{align*} 
of quotient stacks. 
\end{proposition}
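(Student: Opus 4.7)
The plan is to exploit the freeness of the $G$-action on the first factor of $G \times \mathfrak{b} \times \mathbb{C}^n$. Since the action $g.(g', r, i) = (g'g^{-1}, r, gi)$ is free on the $G$-factor, every $G$-orbit meets the slice $\{1\} \times \mathfrak{b} \times \mathbb{C}^n$ in exactly one point. I would combine this with the standard principle that Hamiltonian reduction of a cotangent bundle by a free group action returns the cotangent bundle of the quotient, so that $T^*(G \times \mathfrak{b} \times \mathbb{C}^n)/\!\!/_0 G \cong T^*(\mathfrak{b} \times \mathbb{C}^n)$.

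First I would verify that the inclusion $(r, s, i, j) \mapsto (1, ij, r, s, i, j)$ lands in $\mu_{G\times B}^{-1}(0)$ precisely when $(r, s, i, j) \in \mu^{-1}(0)$. The vanishing of $\mu_G$ forces $\theta = ij$, and evaluating $\mu_B$ at the slice $g = 1, \theta = ij$ gives $\overline{ij} + \ad_r^*(s)$, which equals $\mu(r,s,i,j)$ after unwinding the trace pairing identification of $\ad_r^*(s) \in \mathfrak{b}^*$ with the class of $[r,s]$ in $\mathfrak{g}/\mathfrak{u}$. Next I would show that every $G$-orbit on $\mu_{G\times B}^{-1}(0)$ meets the image of this inclusion in exactly one point: applying $g \in G$ to an arbitrary point $(g,\theta,r,s,i,j)$ sends it uniquely to the slice $g = 1$, with the cotangent data transformed accordingly.

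Next I would verify that the resulting bijection between $G$-orbits on $\mu_{G\times B}^{-1}(0)$ and points of $\mu^{-1}(0)$ is $B$-equivariant. The $B$-action on $G \times \mathfrak{b} \times \mathbb{C}^n$ commutes with the $G$-action, so it descends to the slice; a direct comparison shows that the descended action recovers the standard $B$-action $b.(r, i) = (brb^{-1}, bi)$ on $\mathfrak{b} \times \mathbb{C}^n$ of the paper, and its cotangent lift matches the one used to define $\mu$. Consequently the orbit bijection is $B$-equivariant, yielding the desired bijection between $G \times B$-orbits on $\mu_{G\times B}^{-1}(0)$ and $B$-orbits on $\mu^{-1}(0)$.

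To promote the orbit correspondence to the isomorphism of quotient stacks $\mu^{-1}(0)/B \cong T^*(\widetilde{\mathfrak{g}} \times \mathbb{C}^n/G)$, I would perform the two commuting reductions in the opposite order: quotienting $G \times \mathfrak{b} \times \mathbb{C}^n$ by $B$ first produces $\widetilde{\mathfrak{g}} \times \mathbb{C}^n$ with a residual free $G$-action, so reducing $T^*(G \times \mathfrak{b} \times \mathbb{C}^n)$ by $B$ at level $0$ yields $T^*(\widetilde{\mathfrak{g}} \times \mathbb{C}^n)$, and the further $G$-quotient gives the desired stack. The main obstacle is bookkeeping: the argument juggles two commuting group actions, two moment maps, and the identifications $\mathfrak{b}^* \cong \mathfrak{g}/\mathfrak{u}$ and $T^*G \cong G \times \mathfrak{g}^*$, so conventions for trivializations, sign choices, and cotangent lifts must all be tracked carefully. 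Once these are fixed, the proof reduces essentially to the slice argument and a direct substitution check.
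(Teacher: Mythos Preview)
Your proposal is correct and follows the same approach as the paper: the paper's argument, given in the introduction immediately before the proposition, consists precisely of computing $\mu_{G\times B}^{-1}(0)$ explicitly, observing that $g\in G$ is a free parameter so that the $G$-action can be used to set $g=1$, and then citing Nevins (Proposition~3.2 and Corollary~3.3) for the remaining details. Your write-up is considerably more detailed than what the paper itself provides, in particular your reduction-in-stages argument for the stack isomorphism, but the underlying idea---slicing at $g=1$ via the free $G$-action and matching the residual $B$-action---is identical.
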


We will study the $B$-moment map rather than directly investigate the cotangent bundle of extended Grothendieck-Springer resolution.    
For each algebraic character $\chi:B\rightarrow\mathbb{C}^*$, 
we would like to understand the Hamiltonian reduction $\mu^{-1}(0)/\!\!/_{\chi}B$ 
of $\mathfrak{b}\times \mathbb{C}^n$ twisted by $\chi$ (for various $\chi$) and relate the scheme to other well-known schemes, such as, the Hilbert scheme of $n$ points on a complex plane, i.e., 
construct a morphism $\mu^{-1}(0)/\!\!/_{\chi}B\rightarrow \mu^{-1}(0)/\!\!/B$ and relate it to 
    $(\mathbb{C}^2)^{[n]}\stackrel{}{\twoheadrightarrow} \mathbb{C}^{2n}/S_n$.  
     
The flag Hilbert scheme on a complex plane is defined to be 
\begin{align*}
\FHilb^n(\mathbb{C}^2) =  
\{ I_n \subseteq \ldots \subseteq I_1 \subseteq I_0 = \mathbb{C}[x,y]: \dim_{\mathbb{C}} \mathbb{C}[x,y]/I_i = i
\}, 
\end{align*}
which also has the following description: let $B^-$ be lower triangular matrices in $G$ and let $\mathfrak{b}^- :=\lie(B^-)$. Let $\mathfrak{u}^-$ be nilpotent matrices in $\mathfrak{b}^-$. Then 
\begin{align*}
\FHilb^n(\mathbb{C}^2)= 
\{ (x,y,i)\in \mathfrak{b}^-\times \mathfrak{b}^-\times \mathbb{C}^n: [x,y]=0, x^a y^b i \mbox{ span }\mathbb{C}^n
\}/B^-.
\end{align*} 
The flag Hilbert schemes are singular for large $n\gg 0$, 
reducible, and their dimensions are much greater than their expected dimensions. 
They are currently of great interest in quantum topology and categorical representation theory because of the correspondence between the Koszul complexes of the torus fixed points on the flag Hilbert scheme and idempotents in the category of Soergel bimodules  
(cf. \cite{GNR16}). It would be interesting to construct explicit maps from $\mu^{-1}(0)/\!\!/_{\chi} B$ to the flag Hilbert scheme for appropriate choices of $\chi$.

The cotangent bundle $T^*(\mathfrak{b}\times \mathbb{C}^n)$ 
can also be viewed as a certain filtered quiver representation space using a double framed Jordan quiver using what is known as universal quiver flags (cf. Section $2$ of \cite{Cerulli-Irelli-Feigin-Reineke}, Section $2$ of \cite{Craw11}, Chapter $3$ in \cite{Im-doctoral-thesis}). 
The quiver flag varieties appear in the geometric interplay of Khovanov-Lauda-Rouquier (also known as KLR or quiver Hecke) algebras (\cite{MR2525917}, \cite{MR2763732}, \cite{Przezdziecki15}, \cite{Rouquier-2-Kac-Moody-algebras}, \cite{Stroppel-Webster-quiver-schur-algebras-q-fock-space}) and in Lusztig's geometric representation of the upper half $U^+$ of the universal enveloping algebra of a Kac-Moody algebra (\cite{MR1035415}, \cite{MR1182165}, \cite{MR1758244}), 
so the importance of the (Grothendieck-)Springer resolution is paramount.

Thus a good understanding of the geometry of the Hamiltonian reduction $\mu^{-1}(0)/\!\!/_{\chi}B$ of $\mathfrak{b}\times \mathbb{C}^n$ by $B$ is useful and valuable.  
On the other hand, standard results from geometric invariant theory do not apply since it is a quotient by a nonreductive group.

Now, before we can construct the GIT or affine quotient, $\mu^{-1}(0)/\!\!/_{\chi}B$ or $\mu^{-1}(0)/\!\!/B$, respectively, we need to show that $\mu^{-1}(0)$ is a complete intersection so that the preimage of zero has appropriate number of irreducible components, i.e., $2^n$ to be exact, in our setting (see \cite{Nevins-GSresolutions} for more detail). 
Although the author has made progress in this direction, it remains an open problem. 
This manuscript, however, gives a complete answer for the open locus of points represented by regular semisimple matrices of a Borel subalgebra, both set-theoretically and scheme-theoretically.

\begin{definition}\label{definition:distinct-r-eigenvalues}
Let $\mu^{-1}(0)^{rss}$ be the set of quadruples 
\[ \{ (r,s,i,j)\in \mu^{-1}(0): r \mbox{ has distinct eigenvalues}\}.  
  \]  
\end{definition}

Let $r=(r_{\iota\gamma})$ be an $n\times n$ matrix in $\mathfrak{b}$. We write $\diag(r)$ to mean the diagonal matrix 
with entries $(r_{11},r_{22},\ldots, r_{nn})$ along its main diagonal.  

Let $\I$ be the $n\times n$ identity matrix.

\begin{proposition}\label{proposition:map-from-B-rss-to-complex-space}
For $(r,s,i,j)$ in $\mu^{-1}(0)^{rss}$, 
choose $b\in B$ as in Proposition~\ref{proposition:distinct-ev-diagonalizable} so that 
$(\Ad_b(r)$, $\Ad_b^*(s)$, $bi$, $jb^{-1})$ $=$ $(\diag(r)$, $s'$, $i'$, $j')$. 
Then each diagonal coordinate function of $s'=(s_{\iota\gamma}')$ is 
\begin{equation}\label{eqn:s-single-prime-first-prop}
s_{\iota\iota}'=
 \left[ \tr\left( \prod_{
1\leq k\leq n, k\not=\iota 
} l_k(r) \right)\right]^{-1} 
  \tr \left(\prod_{
1\leq k\leq n, k\not=\iota
} l_k(r) \:s\right),  
\end{equation} 
where $l_k(r) = r -r_{kk}\I$. 
\end{proposition}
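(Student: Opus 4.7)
The plan is to reduce the identity to the observation that for a matrix $r$ with distinct eigenvalues, the polynomial $\prod_{k\neq\iota}(r-r_{kk}\I)$ is, up to a Vandermonde scalar, the spectral projector onto the $r_{\iota\iota}$-eigenspace. No use of the moment-map equation $\mu(r,s,i,j)=0$ is needed; the formula is a purely linear-algebraic consequence of $r$ being regular semisimple.

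First I would record the Lagrange/functional-calculus identity
\[
\prod_{k\neq\iota} l_k(r) \;=\; \prod_{k\neq\iota}(r-r_{kk}\I) \;=\; \Bigl(\prod_{k\neq\iota}(r_{\iota\iota}-r_{kk})\Bigr)\,P_\iota,
\]
where $P_\iota$ is the rank-one idempotent projecting onto the $r_{\iota\iota}$-eigenspace of $r$. This follows because $r$ is upper triangular with pairwise distinct diagonal entries $r_{11},\ldots,r_{nn}$, so $r$ is diagonalizable with eigenvalues equal to these entries, and the polynomial in question annihilates every eigenspace of $r$ except the $r_{\iota\iota}$-one. Since $\tr(P_\iota)=1$, taking traces gives
\[
\tr\!\Bigl(\prod_{k\neq\iota} l_k(r)\Bigr) \;=\; \prod_{k\neq\iota}(r_{\iota\iota}-r_{kk}),
\]
which is nonzero, and the right-hand side of \eqref{eqn:s-single-prime-first-prop} simplifies to $\tr(P_\iota\, s)$.

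Next I would invoke Proposition~\ref{proposition:distinct-ev-diagonalizable} to choose $b\in B$ with $\Ad_b(r)=\diag(r)$. Since $\Ad_b$ is an algebra isomorphism sending $r$ to $\diag(r)$, it sends $P_\iota$ to the spectral projector of $\diag(r)$ onto the $r_{\iota\iota}$-eigenspace, namely the matrix unit $E_{\iota\iota}$; equivalently, $P_\iota=b^{-1}E_{\iota\iota}\,b$. The cyclic property of the trace then gives
\[
\tr(P_\iota\, s)\;=\;\tr\bigl(E_{\iota\iota}\cdot bsb^{-1}\bigr)\;=\;(bsb^{-1})_{\iota\iota}.
\]
Because $\mathfrak{b}^*\cong\mathfrak{g}/\mathfrak{u}$ with $\mathfrak{u}$ strictly upper triangular, diagonal entries descend unambiguously to $\mathfrak{b}^*$, so $s'_{\iota\iota}=\overline{bsb^{-1}}_{\iota\iota}=(bsb^{-1})_{\iota\iota}$. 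Combining the three displays above yields the claimed formula.

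The only conceptually nontrivial step is the spectral projector identity; the rest is bookkeeping about how conjugation by $b$ simultaneously diagonalizes $r$ and intertwines the associated projectors. This also makes transparent why the regular semisimple hypothesis is essential: distinctness of the $r_{kk}$ is precisely what makes each Vandermonde factor $r_{\iota\iota}-r_{kk}$ nonzero, so the scalar prefactor in \eqref{eqn:s-single-prime-first-prop} is well-defined and invertible.
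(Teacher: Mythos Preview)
Your proof is correct and takes a genuinely different route from the paper's. The paper proves the identity by an explicit coordinate computation: it uses the concrete formulas for the entries of $b$ and $b^{-1}$ from Proposition~\ref{proposition:distinct-ev-diagonalizable} (equations~\eqref{eqn:diagonalizingmatrix-B} and~\eqref{eqn:diagonalizingmatrix-inverse-of-B}) to expand $(\overline{bsb^{-1}})_{\iota\iota}=\sum_{\gamma}(bs)_{\iota\gamma}(b^{-1})_{\gamma\iota}$ and then verifies term-by-term that $(bs)_{\iota\gamma}(b^{-1})_{\gamma\iota}=(L^{\iota}s)_{\gamma\gamma}$, invoking the vanishing results of Corollary~\ref{cor:productoflkr-r}. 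You instead recognize $L^{\iota}$ directly as the Lagrange spectral projector $P_\iota$, observe that any $b$ diagonalizing $r$ must conjugate $P_\iota$ to $E_{\iota\iota}$ (since $P_\iota$ is a polynomial in $r$), and finish with trace cyclicity. Your argument is shorter, coordinate-free, and in fact proves slightly more: the formula holds for \emph{any} $b\in B$ diagonalizing $r$, not only the specific one of Proposition~\ref{proposition:distinct-ev-diagonalizable}. The paper's approach, by contrast, ties the computation concretely to the structural lemmas about $L^{\iota}$ developed in Section~\ref{subsection:orthogonal-projection-operators}, which are reused elsewhere; your approach bypasses that machinery entirely but does not exercise it.
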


\begin{definition}\label{definition:definition-of-Delta-n-set}
We denote $\Delta_n\subseteq \mathbb{C}^{2n}$ to be the set $\{ (x_{11},\ldots, x_{nn},0,\ldots, 0): x_{\iota\iota}=x_{\gamma\gamma} 
$\mbox{ for some $\iota\not=\gamma$}$ \}$.
\end{definition}

Thus $\mathbb{C}^{2n}\setminus \Delta_n$ is the locus  
$\{ (x_{11},\ldots, x_{nn},y_{11},\ldots,y_{nn}): x_{\iota\iota}\not= x_{\gamma\gamma} \mbox{ whenever } \iota\not= \gamma
\}.
$ 
We now state the main theorems proved in this paper. 

\begin{theorem}\label{theorem:rss-extending-to-all-of-the-reg-semi-stable-locus}
The map 
$P:\mu^{-1}(0)^{rss}\rightarrow\hspace{-8pt}\rightarrow \mathbb{C}^{2n}\setminus \Delta_n$  
given by sending 
\[(r,s,i,j)\mapsto (r_{11},\ldots, r_{nn},s_{11}',\ldots, s_{nn}'),
\] where 
$s_{\iota\iota}'$ in \eqref{eqn:s-single-prime-first-prop}  
is a regular, well-defined, and $B$-invariant surjective map separating orbit closures. 
\end{theorem}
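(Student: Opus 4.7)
The plan is to verify the four asserted properties of $P$ in turn. The key simplification is that, by conjugation invariance of the trace computed in the basis where $r$ is diagonal, the denominator
\[
\tr\!\Bigl( \prod_{1\leq k\leq n,\, k\neq\iota} l_k(r) \Bigr) = \prod_{k\neq\iota}(r_{\iota\iota}-r_{kk})
\]
is a Vandermonde-like product of eigenvalue differences of $r$: after diagonalizing $r$, the matrix $\prod_{k\neq\iota}l_k(r)$ becomes diagonal with only its $(\iota,\iota)$-entry nonzero, equal to $\prod_{k\neq\iota}(r_{\iota\iota}-r_{kk})$. Since $(r,s,i,j)\in\mu^{-1}(0)^{rss}$ forces the diagonal entries of $r$ (its eigenvalues) to be pairwise distinct, this denominator is nowhere vanishing on $\mu^{-1}(0)^{rss}$, so $s_{\iota\iota}'$ extends to a regular function there; combined with the evidently polynomial coordinates $r_{\iota\iota}$, this gives regularity of $P$. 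Well-definedness is immediate because the formula \eqref{eqn:s-single-prime-first-prop} is intrinsic in $(r,s)$, involving no auxiliary choice of diagonalizing element $b\in B$ from Proposition~\ref{proposition:map-from-B-rss-to-complex-space}.

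For $B$-invariance, I would use that since $b\in B$ is upper-triangular, $brb^{-1}$ has the same diagonal as $r$, so each scalar $r_{kk}$ is $B$-invariant and $l_k(brb^{-1})=b\,l_k(r)\,b^{-1}$; hence $\prod_{k\neq\iota}l_k(brb^{-1}) = b\bigl(\prod_{k\neq\iota}l_k(r)\bigr)b^{-1}$, and cyclic invariance of the trace handles the denominator. For the numerator, the trace pairing $\tr(X\tilde s)$ with $X\in\mathfrak{b}$ and $\tilde s\in\mathfrak{g}$ descends to the class of $\tilde s$ in $\mathfrak{b}^*\cong \mathfrak{g}/\mathfrak{u}$ because $\tr(Xu)=0$ for $u\in\mathfrak{u}$ and $X\in\mathfrak{b}$; thus $\overline{bsb^{-1}}$ pairs unambiguously with $b\bigl(\prod_{k\neq\iota} l_k(r)\bigr)b^{-1}\in\mathfrak{b}$, and cyclic invariance then yields $\tr\bigl(b(\prod_{k\neq\iota} l_k(r))b^{-1}\cdot \overline{bsb^{-1}}\bigr)=\tr\bigl((\prod_{k\neq\iota} l_k(r))\cdot s\bigr)$, giving $B$-invariance of $s_{\iota\iota}'$.

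For surjectivity I would exhibit an explicit preimage: given $(x_1,\ldots,x_n,y_1,\ldots,y_n)\in\mathbb{C}^{2n}\setminus\Delta_n$, take $(r,s,0,0)$ with $r=\diag(x_1,\ldots,x_n)$ and $s=\diag(y_1,\ldots,y_n)$. Then $[r,s]=0$ and $ij=0$, so $(r,s,0,0)\in\mu^{-1}(0)^{rss}$, and a direct computation in diagonal form gives $s_{\iota\iota}'=y_\iota$. For the orbit-closure separation, a soft argument suffices: since $P$ is regular and $B$-invariant, every fiber is a closed $B$-invariant subset of $\mu^{-1}(0)^{rss}$, and every $B$-orbit together with its closure is contained in a single fiber, so orbits with distinct images have disjoint closures. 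The step I expect to be the main obstacle---and which is presumably what eventually upgrades this to the scheme-theoretic isomorphism $\mu^{-1}(0)^{rss}/\!\!/B\cong\mathbb{C}^{2n}\setminus\Delta_n$ promised in the abstract---is the sharper converse that each fiber of $P$ is itself a single $B$-orbit (equivalently, contains a unique closed $B$-orbit). To attack that, I would normalize $r$ to $\diag(x_1,\ldots,x_n)$, reduce to the residual diagonal-torus action on $(s,i,j)$ under the moment-map constraints $i_\iota j_\iota=0$ and $s_{\iota\gamma}=-i_\iota j_\gamma/(x_\iota-x_\gamma)$ for $\iota>\gamma$, and use the remaining freedom in the upper-triangular entries of $s$ to collapse each fiber to a single orbit.
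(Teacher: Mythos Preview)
Your treatment of regularity, $B$-invariance, and surjectivity is correct and matches the paper (your surjectivity via the explicit diagonal preimage $(\diag(x),\diag(y),0,0)$ is in fact cleaner than the paper's argument). The gap is in ``separating orbit closures'': your soft argument shows only the trivial implication $P(x)\neq P(y)\Rightarrow\overline{B.x}\cap\overline{B.y}=\varnothing$, which follows from $B$-invariance and continuity alone. The content of the theorem---and what the paper actually proves in Section~\ref{section:proof-of-reg-semistable-locus}---is the converse, that $P$ is injective on orbit closures. You flag this as the ``main obstacle'' but defer it to Theorem~\ref{theorem:rss-locus-to-complex-space-B-invariant}; it is already the substance of Theorem~\ref{theorem:rss-extending-to-all-of-the-reg-semi-stable-locus}. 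The paper's argument runs as follows: after diagonalizing $r$ (Proposition~\ref{proposition:distinct-ev-diagonalizable}), every orbit closure contains a point of the form $(\diag(r),\diag(s'),0,0)$, reached via an explicit $1$-parameter subgroup $\lambda(t)=\diag(t^{a_1},\ldots,t^{a_n})$ with each $a_\iota\in\{-1,0,1\}$ chosen according to which of $i_\iota',j_\iota'$ vanishes (Proposition~\ref{proposition:openlocus-affine-quotient}); distinct diagonal points lie in distinct $B$-orbits (Corollary~\ref{corollary:closed-orbits-are-disjoint}); and $P$ records exactly these diagonal coordinates, so it is injective on orbit closures.

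Two smaller issues in your final sketch. First, $s\in\mathfrak{b}^*\cong\mathfrak{g}/\mathfrak{u}$ has no upper-triangular entries to exploit---its representative is lower-triangular, and once $r$ is diagonal the strictly-lower entries of $s$ are determined by $i,j$ via the moment map, so the ``remaining freedom'' you invoke does not exist; what the paper uses instead is the torus degeneration above. Second, ``each fiber is a single $B$-orbit'' is \emph{not} equivalent to ``each fiber contains a unique closed $B$-orbit'': the fibers here genuinely contain non-closed orbits (those with $i$ or $j$ nonzero) which degenerate to the unique closed one, so only the latter formulation holds and is what is needed.
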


\begin{theorem}\label{theorem:rss-locus-to-complex-space-B-invariant}
The map $P$ in Theorem~\ref{theorem:rss-extending-to-all-of-the-reg-semi-stable-locus} descends 
to a set-theoretic bijective homeomorphism 
$p: \mu^{-1}(0)^{rss}/\!\!/B\rightarrow \mathbb{C}^{2n}\setminus \Delta_n$, where 
\[ \overline{B.(r,s,i,j)} \mapsto (r_{11},\ldots, r_{nn},s_{11}',\ldots, s_{nn}') 
\]  
with $s_{\iota\iota}'$ is given in \eqref{eqn:s-single-prime-first-prop} and $\overline{B.(r,s,i,j)}$ is the $B$-orbit closure of $(r,s,i,j)$. In fact, $p$ induces an isomorphism of varieties.  
\end{theorem}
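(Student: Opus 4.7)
The plan is to produce an explicit two-sided inverse of $p$ via a slice of diagonal quadruples, which will simultaneously yield the set-theoretic bijection, the homeomorphism, and the isomorphism of varieties. By Theorem~\ref{theorem:rss-extending-to-all-of-the-reg-semi-stable-locus}, $P$ is a regular, $B$-invariant morphism, so by the universal property of the affine quotient it factors uniquely as $P=p\circ\pi$, where $\pi:\mu^{-1}(0)^{rss}\to\mu^{-1}(0)^{rss}/\!\!/B$ is the canonical quotient morphism and $p$ is a morphism of affine varieties. For the inverse, I would define the slice
\[
\sigma:\mathbb{C}^{2n}\setminus\Delta_n\longrightarrow\mu^{-1}(0)^{rss},\qquad
(x_{11},\ldots,x_{nn},y_{11},\ldots,y_{nn})\longmapsto\bigl(\diag(x),\diag(y),0,0\bigr).
\]
This is well-defined: the two diagonal matrices commute so $[\diag(x),\diag(y)]=0$, and $ij=0\cdot 0=0$, hence $\mu(\sigma(x,y))=\overline{0}=0$; and the eigenvalues of $\diag(x)$ are the numbers $x_{11},\ldots,x_{nn}$, which are pairwise distinct precisely because $(x,y)\notin\Delta_n$. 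Set $q:=\pi\circ\sigma$.

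The identity $p\circ q=\id_{\mathbb{C}^{2n}\setminus\Delta_n}$ is a short calculation from \eqref{eqn:s-single-prime-first-prop}. With $r=\diag(x)$ I may take $b=\I$ in Proposition~\ref{proposition:map-from-B-rss-to-complex-space}; then each $l_k(r)=r-x_{kk}\I$ is diagonal with a zero in position $k$, so $\prod_{k\neq\iota}l_k(r)$ is diagonal with its only nonzero entry $\prod_{k\neq\iota}(x_{\iota\iota}-x_{kk})$ in position $(\iota,\iota)$. Multiplying by $s=\diag(y)$ scales that single entry by $y_{\iota\iota}$, so the ratio in \eqref{eqn:s-single-prime-first-prop} collapses to $y_{\iota\iota}$. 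Thus $p(q(x,y))=(x,y)$, so $p$ is surjective and $p^*$ is injective.

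To finish, I would upgrade $p$ to an isomorphism by showing that the pullback $p^*:\mathbb{C}[\mathbb{C}^{2n}\setminus\Delta_n]\to\mathbb{C}[\mu^{-1}(0)^{rss}]^B$ is surjective. Given any $B$-invariant regular $f$ on $\mu^{-1}(0)^{rss}$, put $h:=\sigma^*f$. Since $P\circ\sigma=\id$, for each $(r,s,i,j)\in\mu^{-1}(0)^{rss}$ the two points $(r,s,i,j)$ and $\sigma(P(r,s,i,j))$ share the same image under $P$, so by the orbit-closure-separation assertion of Theorem~\ref{theorem:rss-extending-to-all-of-the-reg-semi-stable-locus} their $B$-orbit closures coincide; because any regular $B$-invariant function is continuous and constant on $B$-orbits it is constant on $B$-orbit closures, so
\[
f(r,s,i,j)=f(\sigma(P(r,s,i,j)))=h(P(r,s,i,j))=(P^*h)(r,s,i,j),
\]
whence $f=P^*h$ lies in the image of $p^*$. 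This makes $p^*$ an isomorphism of coordinate rings, hence $p$ an isomorphism of affine varieties, and the set-theoretic bijection together with the homeomorphism follow at once. The main obstacle is precisely this surjectivity step: because $B$ is nonreductive, standard reductive GIT does not apply, and the entire argument rests on the orbit-closure-separation content of Theorem~\ref{theorem:rss-extending-to-all-of-the-reg-semi-stable-locus} and the fact that regular $B$-invariant functions are constant on orbit closures.
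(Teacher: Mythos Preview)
Your slice argument is a genuinely different route from the paper's.  The paper proves Theorem~\ref{theorem:rss-locus-to-complex-space-B-invariant} by an explicit computation of the invariant ring: it uses Systematic Procedure~\ref{coordinatering:coordinate-ring-off-diagonal} to eliminate the off-diagonal $s_{\iota\gamma}$, identifies $\mathbb{C}[\mu^{-1}(0)^{rss}]$ as a quotient by the $F_\iota$, establishes via the Gr\"obner/initial-term argument (Lemmas~\ref{lemma:initial-term-of-each-F}--\ref{lemma:deriving-reg-seq-from-initialterms}) that the $F_\iota$ form a regular sequence, and then reads off the $B$-invariants as generated by $H_\iota,G_\iota,K_{\gamma\nu}$.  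Your approach instead builds an explicit section $\sigma$ through diagonal quadruples and appeals to the orbit-closure analysis of Section~\ref{section:B-orbits-rss-locus}; this is more conceptual and sidesteps the monomial-ordering machinery.

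There is one imprecision and one genuine gap.  The imprecision: from $P(r,s,i,j)=P(\sigma(P(r,s,i,j)))$ you conclude that ``their $B$-orbit closures coincide,'' but Theorem~\ref{theorem:rss-extending-to-all-of-the-reg-semi-stable-locus} only gives that the two orbit closures contain the \emph{same closed orbit}, not that they are equal as sets.  What you actually need (and what does hold, by Proposition~\ref{proposition:openlocus-affine-quotient} together with Proposition~\ref{proposition:deriving-diag-coordinates-of-bsb-inverse}) is the weaker statement $\sigma(P(r,s,i,j))\in\overline{B.(r,s,i,j)}$, and this suffices for your continuity argument.

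The genuine gap is the last inference ``$f(r,s,i,j)=(P^*h)(r,s,i,j)$ for all points, whence $f=P^*h$.''  This passage from pointwise equality to equality in the coordinate ring requires $\mu^{-1}(0)^{rss}$ to be \emph{reduced}, and you have not established this.  Without reducedness, $f-P^*h$ could be a nonzero nilpotent $B$-invariant, and your surjectivity of $p^*$ fails at the scheme level (which is what ``isomorphism of varieties'' demands here, cf.\ the abstract's ``scheme-theoretically isomorphic'').  This is exactly where the paper's Section~\ref{subsection:initial-ideal-regular-sequence} enters: the regular-sequence result shows $\mu^{-1}(0)^{rss}$ is a complete intersection, and combined with generic smoothness (for instance at points with $r$ diagonal, $j=0$, and all $x_\iota\neq 0$, where the Jacobian $(\partial F_\iota/\partial y_l)$ is $\diag(x_1,\ldots,x_n)$) one gets reducedness.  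You should either invoke that material or supply an alternative proof that $\mu^{-1}(0)^{rss}$ carries no nilpotents; as written, the argument establishes only an isomorphism of reduced schemes.
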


\subsection{Summary of the sections}\label{subsection:summary}
Section~\ref{subsection:orthogonal-projection-operators} investigates properties of a certain set of orthogonal idempotents and their action on a regular semisimple upper-triangular matrix. An orthogonal idempotent $L^{\iota}$ is defined in Definition~\ref{defn:coordinatefunction-of-product-of-rs}, where $1\leq \iota\leq n$. Intuitively, given a regular semisimple matrix $r\in \mathfrak{b}^{rss}$, 
$r$ is diagonalizable with pairwise distinct eigenvalues $r_{kk}$ and eigenvector $v_k = e_k + \sum_{j<k} a_j e_j$. The matrix $l_k(r)=r-r_{kk}\I$ of rank $n-1$ has the same eigenvectors as $r$, and $v_k$ is in the kernel of $l_k(r)$. It follows that multiplying $l_k(r)$, $1\leq k\leq n$, $k\not=\iota$, has all $v_j$, $j\not=\iota$, in the kernel and the vector $v_{\iota}$ as an eigenvector with nonzero eigenvalue. 
This section discusses patterns among the idempotents $L^{\iota}$ and their interactions with $r\in \mathfrak{b}^{rss}$. We also give some results of the interactions between $L^{\iota}$ and $s\in \mathfrak{b}^*$. 
The results in this section are not obvious to some readers so we give a full detail on the idempotents. 
In Section~\ref{subsection:Borel-fixed-points}, we show that certain points remain invariant under the Borel action.

In Section~\ref{section:proof-of-Brss-complex-space}, we prove Proposition~\ref{proposition:map-from-B-rss-to-complex-space}, showing that when $r\in \mathfrak{b}^{rss}$ is diagonalized by the Borel element in  \eqref{eqn:diagonalizingmatrix-B}, the diagonal coordinates of $s'$ are the expected rational functions.  
We then study $B$-orbits on the regular semisimple locus in Section~\ref{section:B-orbits-rss-locus}. In particular, we show that closed orbits in $\mu^{-1}(0)^{rss}$ must be of the form $(r,s,0,0)$ in Remark~\ref{remark:exhausting-all-closed-orbits} and Proposition~\ref{proposition:openlocus-affine-quotient}, 
where $r$ is a regular semisimple element in $\mathfrak{b}$. 

In Section~\ref{section:proof-of-reg-semistable-locus}, we give a proof of Theorem~\ref{theorem:rss-extending-to-all-of-the-reg-semi-stable-locus}. 
  Section~\ref{section:coordinate-ring-rss-locus} thoroughly investigates the coordinate ring of the regular semisimple locus by changing coordinates (Section~\ref{subsection:changing-coordinates}), explicitly describing the 
  $B$-invariant subalgebra of $\mathbb{C}[\mu^{-1}(0)^{rss}]$ (Section~\ref{subsection:B-invariantfunctions}),  and introducing an initial ideal with respect to a weighted monomial ordering in order to show that the set of functions $F_{\iota}(r,s,i,j)=\tr(jL^{\iota}i)$ forms a regular sequence in the ring $\mathbb{C}[T^*(\mathfrak{b}\times \mathbb{C}^n)^{rss}]$ (Section~\ref{subsection:initial-ideal-regular-sequence}).  
 In Section~\ref{section:proof-of-last-proposition-coordinate-ring}, we prove Theorem~\ref{theorem:rss-locus-to-complex-space-B-invariant}, showing a homeomorphism between $B$-affine quotient of the locus $\mu^{-1}(0)^{rss}$ and an open dense subset of $\mathbb{C}^{2n}$.

\subsection{Acknowledgement}\label{subsection:acknowledgement}
The author would like to thank Thomas Nevins for helpful conversations, and the referee for immensely useful remarks on this manuscript. The author was supported by NSA grant H98230-12-1-0216, by Campus Research Board, and by NSF
grant DMS 08-38434.

\section{Preliminaries}\label{section:preliminaries}  
  
We assume throughout this paper that $r\in \mathfrak{b}^{rss}$, an 
 $n\times n$
upper triangular matrix with pairwise distinct eigenvalues.  

\subsection{Properties of orthogonal idempotents in \texorpdfstring{$B$}{B}}\label{subsection:orthogonal-projection-operators} 

We begin with a discussion of a certain set of orthogonal idempotents in $B$. 

\begin{notation}\label{notation:empty-sum-empty-product-convention}
We will use the convention that an empty sum is defined to be 0 while an empty product is defined to be 1; that is, if $\gamma <\iota$, 
\[ \sum_{k= \iota}^{\gamma} f(k) := 0 
\hspace{4mm}
\mbox{ and } 
\hspace{4mm}
\prod_{k=\iota}^{\gamma} f(k) := 1.  
\]  
We also note that 
\[ \sum_{\iota < k_1 < \ldots < k_v < \mu} f(k_i) :=0 
\hspace{4mm}
\mbox{ if }
  v\geq \mu-\iota. 
\]      
\end{notation}

\begin{definition}\label{defn:coordinatefunction-of-product-of-rs}
For $l_k(r)=r-r_{kk}\I$, 
we define 
\begin{equation}\label{eqn:n-idempotents-L-iota}
L^{\iota} := \left[\tr\left(\prod_{1\leq k\leq n, k\not= \iota} l_k(r)\right)\right]^{-1} \prod_{1\leq k\leq n, k\not= \iota} 
l_k(r) 
\end{equation}
and 
will write  
\[ 
L_{\gamma\mu}^{\iota} = 
\left( \left[\tr\left(\prod_{1\leq k\leq n, k\not= \iota} l_k(r)\right)\right]^{-1} \prod_{1\leq k\leq n, k\not= \iota} 
l_k(r) \right)_{\gamma\mu} 
\]   to denote the coordinate functions of $L^{\iota}$. 
\end{definition} 
In Lemma~\ref{lemma:Baction-on-Liota-operator} and 
Propositions~\ref{proposition:diagonal-rss-natural-matrix-representation} and \ref{proposition:B-invariant-functions-involving-s}, we will write $L^{\iota}(r)$ instead of $L^{\iota}$ since $L^{\iota}$ depends on $r$.

\begin{lemma}\label{lemma:lkrmatrices-orthogonal-rss}  
The $L^{\iota}$'s form mutually orthogonal idempotents. That is, 
we have 
\[ \tr(L^{\iota})=1,\:\: \left( L^{\iota} \right)^2 = L^{\iota}\:\: \mbox{ and }
\:\: L^{\iota} L^{\gamma}=0 
\] for any $\iota\not=\gamma$. 
In particular, any row of $L^{\iota}$
is orthogonal to any column of $L^{\gamma}$ 
for $\iota\not=\gamma$. 
\end{lemma}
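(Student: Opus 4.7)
The plan is to exploit the fact that $r$ is regular semisimple and upper triangular, so by Proposition~\ref{proposition:distinct-ev-diagonalizable} there exists $b \in B$ with $b r b^{-1} = D := \diag(r_{11},\ldots,r_{nn})$. Since the $r_{kk}$ are pairwise distinct, this reduces everything to a computation with diagonal matrices. The hoped-for conclusion is that each $L^{\iota}$ is, up to conjugation by $b$, the standard matrix unit $E_{\iota\iota}$, from which all three identities are immediate.

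More concretely, the first step is to write $l_k(r) = b^{-1}(D - r_{kk}\mathbb{I})b$ and therefore
\[
\prod_{1 \leq k \leq n,\, k \neq \iota} l_k(r) \;=\; b^{-1}\Bigl(\prod_{1 \leq k \leq n,\, k \neq \iota} (D - r_{kk}\mathbb{I})\Bigr) b.
\]
The diagonal matrix in parentheses has $(\gamma,\gamma)$-entry $\prod_{k \neq \iota}(r_{\gamma\gamma} - r_{kk})$, which vanishes for every $\gamma \neq \iota$ (the factor $k = \gamma$ kills it) and equals the nonzero scalar $c_{\iota} := \prod_{k \neq \iota}(r_{\iota\iota} - r_{kk})$ when $\gamma = \iota$. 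Hence the product equals $c_{\iota}\, b^{-1} E_{\iota\iota} b$, and by the cyclic property of the trace, $\tr\bigl(\prod_{k \neq \iota} l_k(r)\bigr) = c_{\iota}\cdot \tr(E_{\iota\iota}) = c_{\iota}$. Substituting into the definition \eqref{eqn:n-idempotents-L-iota} gives
\[
L^{\iota} \;=\; b^{-1} E_{\iota\iota}\, b.
\]

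Given this conjugation-equivalence with the standard orthogonal idempotents $E_{\iota\iota}$, all three claims follow at once: $\tr(L^{\iota}) = \tr(E_{\iota\iota}) = 1$, $(L^{\iota})^2 = b^{-1} E_{\iota\iota}^2 b = L^{\iota}$, and for $\iota \neq \gamma$ we get $L^{\iota} L^{\gamma} = b^{-1} E_{\iota\iota} E_{\gamma\gamma} b = 0$. The row-column orthogonality assertion is then just the entry-wise reading of $L^{\iota} L^{\gamma} = 0$: the $(a,b)$-entry of the product is $\sum_{c} L^{\iota}_{ac} L^{\gamma}_{cb}$, which is precisely the inner product of the $a$-th row of $L^{\iota}$ with the $b$-th column of $L^{\gamma}$.

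There is essentially no real obstacle: the only subtlety is verifying that the diagonalizing conjugator $b$ can be chosen in $B$ (upper triangular), which is where regular semisimplicity of an upper-triangular matrix is used and which the author has already isolated in an earlier proposition. The argument is robust against the choice of $b$ because $L^{\iota}$ is defined intrinsically in terms of $r$, so any valid $b$ yields the same $L^{\iota}$.
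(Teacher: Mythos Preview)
Your proof is correct and conceptually cleaner than the paper's. The paper argues entry-by-entry: it first records the vanishing pattern $L^{\iota}_{\mu\alpha}=0$ whenever $\mu>\iota$ or $\alpha<\iota$ (coming from each $l_k(r)$ being upper triangular with a zero in the $(k,k)$-slot), reads off $\tr(L^{\iota})=1$ from $L^{\iota}_{\iota\iota}=1$, and then verifies $(L^{\iota})^2=L^{\iota}$ and $L^{\iota}L^{\gamma}=0$ by case analysis on the indices using those vanishing properties together with commutativity of the $L^{\iota}$'s. Your approach instead recognises $L^{\iota}$ as the Lagrange interpolation projector: after diagonalising $r$, the polynomial $\prod_{k\neq\iota}(x-r_{kk})\big/\prod_{k\neq\iota}(r_{\iota\iota}-r_{kk})$ evaluated at $D$ is exactly $E_{\iota\iota}$, so $L^{\iota}$ is conjugate to $E_{\iota\iota}$ and all three identities are immediate. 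This is more transparent and makes no use of the upper-triangular shape of $r$.

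One caution about logical ordering: in the paper, Proposition~\ref{proposition:distinct-ev-diagonalizable} appears \emph{after} Lemma~\ref{lemma:lkrmatrices-orthogonal-rss}, and its proof invokes Lemma~\ref{lemma:lkrmatrices-orthogonal-rss} (to check the claimed formula for $b^{-1}$). Citing it here would therefore be circular. Fortunately your argument does not actually need $b\in B$: any $g\in GL_n(\mathbb{C})$ with $grg^{-1}=D$ suffices, and such a $g$ exists simply because $r$ has $n$ distinct eigenvalues. Replace the appeal to Proposition~\ref{proposition:distinct-ev-diagonalizable} by this elementary fact and the proof stands on its own.
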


\begin{proof}  
It is clear that $\tr(L^{\iota})=1$ 
since $L_{\mu\alpha}^{\iota}=0$ if $\mu>\iota$ or $\alpha <\iota$, and the only nonzero diagonal entry is  $L_{\iota\iota}^{\iota}=1$. 

Now suppose 
$\iota=\gamma$.  
Then 
\begin{gather}\label{gather:L-iota-operator-options}
\begin{aligned}
L_{\mu\alpha}^{\iota} =0 &\mbox{ if } \mu>\iota \mbox{ or } \alpha <\iota, \mbox{ and }  \\
L_{\alpha\nu}^{\iota} =0 &\mbox{ if } \alpha >\iota \mbox{ or } \nu<\iota.   \\
\end{aligned}
\end{gather}
So 
\[\begin{aligned}
\left(L^{\iota}L^{\iota} \right)_{\mu\nu} 
    &= 
\sum_{\alpha=1}^n L_{\mu\alpha}^{\iota} L_{\alpha\nu}^{\iota} \\
  &= 
  \sum_{\alpha=\iota}^{\iota} L_{\mu\alpha}^{\iota} L_{\alpha\nu}^{\iota} \\
   &= 
L_{\mu\iota}^{\iota} L_{\iota\nu}^{\iota}  \\
&= \left\{ 
\begin{aligned} 
L_{\mu\iota}^{\iota} = L_{\mu\nu}^{\iota} 
		\quad
			&\mbox{ if } \mu < \iota, \nu = \iota, \\
1\;\; = L_{\mu\nu}^{\iota} 
		\quad  
			&\mbox{ if } \mu=\iota, \nu = \iota, \\
L_{\iota\nu}^{\iota} = L_{\mu\nu}^{\iota}  
		\quad 
			&\mbox{ if } \mu = \iota, \nu > \iota, \\  
 L_{\mu\nu}^{\iota} 
		\quad 
 			&\mbox{ if } \mu <\iota, \nu > \iota, \\  
0\;\; = L_{\mu\nu}^{\iota}   
		\quad  
 			&\mbox{ if } \mu > \iota \mbox{ or } \nu<\iota, \\  
\end{aligned}  
\right. \\ 
\end{aligned}  
\]
where the second equality holds by \eqref{gather:L-iota-operator-options}.  
It is a direct calculation that 
$L_{\mu\iota}^{\iota} L_{\iota\nu}^{\iota} = L_{\mu\nu}^{\iota}$.  
Thus $\left( L^{\iota}\right)^2 = L^{\iota}$.

For  
$\iota > \gamma$, 
\[
\left( L^{\iota}L^{\gamma}\right)_{\mu\nu} 
=  \sum_{\alpha=1}^n L_{\mu\alpha}^{\iota}L_{\alpha\nu}^{\gamma}=0
\] since $L_{\mu\alpha}^{\iota}=0$ for each  
$\alpha <\iota$ 
and $L_{\alpha\nu}^{\gamma}=0$ for each $\alpha>\gamma$. 
 Thus $L^{\iota}L^{\gamma}=0$ whenever $\iota >\gamma$.

Finally for 
$\iota < \gamma$,  
$L^{\iota}L^{\gamma} = L^{\gamma} L^{\iota} =0$  since $L^{\iota}$'s commute and by previous case.  
\end{proof} 

\begin{remark}
The $n$ idempotents $L^{\iota}$ 
in \eqref{eqn:n-idempotents-L-iota}  
sum to the identity: 
$\displaystyle{\sum_{\iota=1}^{n} L^{\iota}}=\I$. 
So the identity element decomposes as the sum of these orthogonal idempotents. 
\end{remark}

\begin{corollary}\label{cor:product-of-r-minus-rkk} 
For $1\leq \iota\leq n$,   
$\displaystyle{
L^{\iota}  r = r L^{\iota}}
$ 
are zero strictly to the left of $\iota$-th column or 
strictly below $\iota$-th row. 
\end{corollary}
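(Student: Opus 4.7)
The proof splits naturally into two pieces: the commutation relation, and the block-triangular vanishing pattern.

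For the commutation $L^{\iota} r = r L^{\iota}$, the strategy is trivial: by Definition~\ref{defn:coordinatefunction-of-product-of-rs}, $L^{\iota}$ is a scalar multiple of $\prod_{k \neq \iota} (r - r_{kk} \mathbf{I}_n)$, i.e., a polynomial in $r$. Any polynomial in $r$ commutes with $r$, so nothing more needs to be said.

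For the vanishing claim, the plan is to exploit the vanishing pattern of $L^{\iota}$ recorded in \eqref{gather:L-iota-operator-options} of the proof of Lemma~\ref{lemma:lkrmatrices-orthogonal-rss}, namely
\[
L^{\iota}_{\mu\alpha} = 0 \text{ if } \mu > \iota \text{ or } \alpha < \iota, \qquad L^{\iota}_{\alpha\nu} = 0 \text{ if } \alpha > \iota \text{ or } \nu < \iota,
\]
together with upper-triangularity of $r$, which says $r_{\alpha\beta} = 0$ whenever $\alpha > \beta$. I would then compute $(L^{\iota} r)_{\mu\nu} = \sum_{\alpha} L^{\iota}_{\mu\alpha} r_{\alpha\nu}$ and check the two cases:
\begin{enumerate}
\item If $\mu > \iota$, then $L^{\iota}_{\mu\alpha} = 0$ for every $\alpha$, so the whole sum vanishes.
\item If $\nu < \iota$, then a nonzero contribution requires $\alpha \geq \iota$ (so that $L^{\iota}_{\mu\alpha} \neq 0$) and simultaneously $\alpha \leq \nu$ (so that $r_{\alpha\nu} \neq 0$). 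These force $\iota \leq \alpha \leq \nu < \iota$, a contradiction, so every term vanishes.
\end{enumerate}
Since $L^{\iota} r = r L^{\iota}$, the identical conclusion for $r L^{\iota}$ is automatic; alternatively one can run the symmetric argument using $(r L^{\iota})_{\mu\nu} = \sum_{\alpha} r_{\mu\alpha} L^{\iota}_{\alpha\nu}$.

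Honestly, there is no real obstacle here: once the coordinate vanishing of $L^{\iota}$ from Lemma~\ref{lemma:lkrmatrices-orthogonal-rss} is in hand, the corollary reduces to a two-line bookkeeping exercise on indices, combined with the observation that $L^{\iota}$ is a polynomial in $r$. The only mild subtlety is remembering that ``strictly to the left of the $\iota$-th column'' means $\nu < \iota$ and ``strictly below the $\iota$-th row'' means $\mu > \iota$, so that both statements are addressed by the two cases above.
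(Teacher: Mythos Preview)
Your proof is correct and follows the same route the paper takes: its proof is the single sentence ``This is deduced from vanishing properties of $L^{\iota}$,'' and you have simply unpacked that remark by citing \eqref{gather:L-iota-operator-options} and the upper-triangularity of $r$. Your observation that $L^{\iota}$ is a polynomial in $r$ (hence commutes with $r$) is exactly the intended justification for $L^{\iota}r = rL^{\iota}$.
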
 

\begin{proof} 
This is deduced from vanishing properties of $L^{\iota}$. 
\end{proof}



%
%
 
\begin{lemma}\label{lemma:lkr-times-r-identity}
For each $\gamma$ and $r\in\mathfrak{b}$,  
$(L^{\iota} r)_{\gamma\gamma} = (rL^{\iota})_{\gamma\gamma} = r_{\iota\iota}L_{\gamma\gamma}^{\iota}$.  
\end{lemma}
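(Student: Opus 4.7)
The plan is to promote the diagonal identity to the sharper two-sided eigenvalue equation $rL^{\iota} = L^{\iota}r = r_{\iota\iota}L^{\iota}$ and then read off the $(\gamma,\gamma)$-entry. Since $r\in\mathfrak{b}^{rss}$ has pairwise distinct eigenvalues $r_{11},\ldots,r_{nn}$, its characteristic polynomial factors as $\prod_{k=1}^{n}(t-r_{kk})$, so Cayley-Hamilton gives $\prod_{k=1}^{n}l_k(r)=0$. The factors $l_k(r)=r-r_{kk}\I$ are polynomials in $r$ and therefore pairwise commute, so I may isolate the $k=\iota$ factor and write $l_{\iota}(r)\cdot\prod_{k\neq\iota}l_k(r)=0$. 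Dividing by the normalizing scalar $\tr\bigl(\prod_{k\neq\iota}l_k(r)\bigr)$ yields $l_{\iota}(r)L^{\iota}=0$, and commutativity (which is exactly Corollary~\ref{cor:product-of-r-minus-rkk}) gives $L^{\iota}l_{\iota}(r)=0$ as well. Unfolding $l_{\iota}(r)=r-r_{\iota\iota}\I$ yields $rL^{\iota}=L^{\iota}r=r_{\iota\iota}L^{\iota}$, and taking the $(\gamma,\gamma)$-entry of both sides is the claim.

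As an independent cross-check (and in the entry-wise spirit of Lemma~\ref{lemma:lkrmatrices-orthogonal-rss}), one can argue from the vanishing pattern \eqref{gather:L-iota-operator-options}: upper-triangularity of $r$ gives $(L^{\iota}r)_{\gamma\gamma}=\sum_{\alpha\leq\gamma}L^{\iota}_{\gamma\alpha}r_{\alpha\gamma}$, and since $L^{\iota}_{\gamma\alpha}=0$ unless $\gamma\leq\iota\leq\alpha$, the sum collapses to at most the single index $\alpha=\iota=\gamma$, contributing $L^{\iota}_{\iota\iota}r_{\iota\iota}=r_{\iota\iota}$; for $\gamma\neq\iota$ the sum is empty. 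This matches $r_{\iota\iota}L^{\iota}_{\gamma\gamma}$, since $L^{\iota}_{\gamma\gamma}=\delta_{\gamma\iota}$. The argument for $(rL^{\iota})_{\gamma\gamma}$ is symmetric.

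The main thing to verify is that the normalizing scalar $\tr\bigl(\prod_{k\neq\iota}l_k(r)\bigr)$ is nonzero, so that $L^{\iota}$ is well-defined and the division in the Cayley-Hamilton step is legitimate. This is immediate by diagonalizing $r$: in an eigenbasis, $\prod_{k\neq\iota}l_k(r)$ is diagonal with a single nonzero entry $\prod_{k\neq\iota}(r_{\iota\iota}-r_{kk})$, and this product is nonzero precisely because the eigenvalues of $r$ are distinct. Beyond this routine check, no substantive obstacle arises; the result is essentially a restatement of the fact that $L^{\iota}$ projects onto the $r_{\iota\iota}$-eigenline of $r$.
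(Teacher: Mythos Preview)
Your proof is correct. Your primary argument via Cayley--Hamilton is a genuinely different and more conceptual route than the paper's: the paper proceeds purely entry-wise, computing $(L^{\iota}r)_{\gamma\gamma}=\sum_{\mu}L^{\iota}_{\gamma\mu}r_{\mu\gamma}$ and then splitting into the three cases $\gamma>\iota$, $\gamma=\iota$, $\gamma<\iota$ using the vanishing pattern of $L^{\iota}$ and upper-triangularity of $r$---essentially your cross-check paragraph. Your Cayley--Hamilton argument instead proves the stronger operator identity $rL^{\iota}=L^{\iota}r=r_{\iota\iota}L^{\iota}$ in one stroke, making transparent that $L^{\iota}$ is the spectral projector onto the $r_{\iota\iota}$-eigenline; the diagonal identity then falls out trivially. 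This buys you brevity and a statement that is implicitly used elsewhere in the paper (e.g.\ in the diagonalization of $r$ by $b$ in Proposition~\ref{proposition:distinct-ev-diagonalizable}). One minor point: the commutativity $rL^{\iota}=L^{\iota}r$ is immediate from $L^{\iota}$ being a polynomial in $r$, so you need not invoke Corollary~\ref{cor:product-of-r-minus-rkk}, whose content is the vanishing pattern rather than commutativity.
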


\begin{proof}
For a fixed $\iota$, it is clear that $L^{\iota}r=rL^{\iota}$ since $r$ and $L^{\iota}$ commute. We will thus show for each $\gamma$, 
		$(L^{\iota}r)_{\gamma\gamma} = r_{\iota\iota}L_{\gamma\gamma}^{\iota}$.  
So 
\[
  \begin{aligned} 
\left( 
  L^{\iota}r\right)_{\gamma\gamma} &= \sum_{\mu=1}^n 
L_{\gamma\mu}^{\iota} r_{\mu\gamma}  \\
&= \sum_{\mu=\iota}^n L_{\gamma\mu}^{\iota}r_{\mu\gamma} 
\hspace{4mm}
\mbox{ since }L_{\gamma\mu}^{\iota} =0 \;\;\;\mbox{ for }\mu<\iota \\  
&= \left\{  
\begin{aligned}  
\sum_{\mu=\iota}^n 0\cdot r_{\mu\gamma} = 0=r_{\iota\iota}L_{\gamma\gamma}^{\iota} 
			\qquad\qquad\qquad 
			&\mbox{ if }\gamma >\iota, \\
 \sum_{\mu=\iota}^n L_{\gamma\mu}^{\iota} r_{\mu\gamma} 
 	\stackrel{\dagger}{=}		\sum_{\mu=\iota}^{\iota} L_{\iota\mu}^{\iota} r_{\mu\iota} 
	= 	r_{\iota\iota}\cdot 1= r_{\iota\iota}L_{\gamma\gamma}^{\iota}      
			\;\; 
			&\mbox{ if }\gamma = \iota, \\
\sum_{\mu=\iota}^n L_{\gamma\mu}^{\iota} r_{\mu\gamma} 
	\stackrel{\ddagger}{=}   \sum_{\mu=\iota}^n L_{\gamma\mu}^{\iota}\cdot 0 = r_{\iota\iota}L_{\gamma\gamma}^{\iota}  
			\qquad 
			&\mbox{ if }\gamma < \iota,  \\  
\end{aligned} 
\right.  
\end{aligned}  
\] 
where 
	$\dagger$ holds since $r_{\mu\gamma}=0$ for all $\mu > \gamma$ and since $\gamma=\iota$, 
 		and 
 			$\ddagger$ holds since $r_{\mu\gamma}=0$ for all $\mu > \gamma$ 
 				and since $\mu$ ranges over $\iota \leq \mu\leq n$.  
\end{proof}

%
%
%

\begin{corollary}\label{cor:productoflkr-r}  
Let $\iota=1,2,\ldots, n$,   and $s\in\mathfrak{b}^*$. 
For each $\gamma > \iota$, $\left(L^{\iota}s\right)_{\gamma\mu}=0$.  
\end{corollary}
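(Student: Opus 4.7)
The plan is to reduce this corollary to a single row-vanishing property of $L^{\iota}$ that was already established in the proof of Lemma~\ref{lemma:lkrmatrices-orthogonal-rss}. There is essentially nothing new to do beyond expanding a matrix product.

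First, I recall from the display \eqref{gather:L-iota-operator-options} (used in proving the idempotency of the $L^{\iota}$) that $L_{\mu\alpha}^{\iota} = 0$ whenever $\mu > \iota$, for every column index $\alpha$. In particular, the entire $\gamma$-th row of $L^{\iota}$ is zero as soon as $\gamma > \iota$. This is the only ingredient.

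Next, writing out the matrix product for a representative $s = (s_{\alpha\mu})$ of the class in $\mathfrak{g}/\mathfrak{u} \cong \mathfrak{b}^*$, I compute
\[
(L^{\iota} s)_{\gamma\mu} \;=\; \sum_{\alpha=1}^{n} L_{\gamma\alpha}^{\iota}\, s_{\alpha\mu}.
\]
When $\gamma > \iota$, every factor $L_{\gamma\alpha}^{\iota}$ in this sum vanishes by the preceding paragraph, so the entire sum is $0$. As a quick consistency check, the statement is independent of the choice of representative of $s$: if $s' = s + u$ with $u \in \mathfrak{u}$, then $(L^{\iota}(s'-s))_{\gamma\mu} = (L^{\iota}u)_{\gamma\mu}$ vanishes for $\gamma > \iota$ by exactly the same argument, so the conclusion passes to the quotient $\mathfrak{g}/\mathfrak{u}$ without ambiguity.

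There is no real obstacle here. The only bookkeeping point worth emphasizing is that, unlike Lemma~\ref{lemma:lkr-times-r-identity}, we make no use of any relation between $L^{\iota}$ and $s$ (such as commutativity or triangularity of $s$): the vanishing is forced purely by the shape of $L^{\iota}$, and in fact would hold with $s$ replaced by any $n \times n$ matrix whatsoever.
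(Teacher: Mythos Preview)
Your proof is correct and follows essentially the same route as the paper: both expand $(L^{\iota}s)_{\gamma\mu}$ as a sum over the $\gamma$-th row of $L^{\iota}$ and invoke the vanishing $L^{\iota}_{\gamma\alpha}=0$ for $\gamma>\iota$. The only cosmetic difference is that the paper cites Corollary~\ref{cor:product-of-r-minus-rkk} for this vanishing, while you cite the display~\eqref{gather:L-iota-operator-options} from the proof of Lemma~\ref{lemma:lkrmatrices-orthogonal-rss} directly; your additional remark about independence of representative in $\mathfrak{g}/\mathfrak{u}$ is a harmless extra observation.
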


The matrix  
$\displaystyle{
   L^{\iota} s 
}$ 
in Corollary~\ref{cor:productoflkr-r} 
is zero strictly below $\iota$-th row. 
However,  
$\displaystyle{L^{\iota} s }$ is not zero strictly to the left of $\iota$-th column. 

\begin{proof}  
For $\gamma > \iota$, 
\[ 
\left( \prod_{1\leq k \leq n, k\not=\iota} l_k(r) \: s \right)_{\gamma\mu} 
	= \sum_{\kappa=1}^n \left(\prod_{1\leq k\leq n, k\not=\iota }l_k(r) \right)_{\gamma\kappa} \!\! s_{\kappa\mu}  
			=   
	\sum_{\kappa=1}^n \: 0\cdot s_{\kappa\mu}  = 0,   \\  
\] 
where the second equality holds by Corollary~\ref{cor:product-of-r-minus-rkk}. 
\end{proof}

%

We will see an application of the following Lemma in Section~\ref{subsection:B-invariantfunctions}. 

\begin{lemma}\label{lemma:Baction-on-Liota-operator}
For any $d\in B$, $L^{\iota}(\Ad_d(r))=\Ad_d (L^{\iota}(r))$, where the $B$-action on the operator is by conjugation. 
\end{lemma}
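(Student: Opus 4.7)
The plan is to reduce the claim to two elementary facts: that conjugation by an element of $B$ preserves the diagonal entries of an upper triangular matrix, and that conjugation is a ring homomorphism on $\mathfrak{g}$ (so it commutes with taking products and scalar linear combinations) and preserves the trace.

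First I would verify that for any $d \in B$ and $r \in \mathfrak{b}$, the diagonal entries of $\Ad_d(r) = d r d^{-1}$ agree with those of $r$. This is a one-line index computation: $(drd^{-1})_{\iota\iota} = \sum_{j,k} d_{\iota j} r_{jk} (d^{-1})_{k\iota}$, and upper triangularity of $d$, $r$, and $d^{-1}$ forces $\iota \leq j \leq k \leq \iota$, hence $j = k = \iota$, so the sum collapses to $d_{\iota\iota} r_{\iota\iota} (d^{-1})_{\iota\iota} = r_{\iota\iota}$. Consequently, for every $k$,
\begin{equation*}
l_k(\Ad_d(r)) = \Ad_d(r) - r_{kk}\mathbf{I} = d(r - r_{kk}\mathbf{I})d^{-1} = \Ad_d(l_k(r)),
\end{equation*}
since $\Ad_d$ fixes the identity matrix and the eigenvalue $r_{kk}$ is preserved.

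Next I would push this through the product. Because $\Ad_d$ is an algebra automorphism of $\mathfrak{g}$,
\begin{equation*}
\prod_{1 \leq k \leq n,\, k \neq \iota} l_k(\Ad_d(r)) \;=\; \prod_{1 \leq k \leq n,\, k \neq \iota} \Ad_d(l_k(r)) \;=\; \Ad_d\!\left( \prod_{1 \leq k \leq n,\, k \neq \iota} l_k(r) \right).
\end{equation*}
Taking traces and using the cyclicity of the trace gives
\begin{equation*}
\tr\!\left( \prod_{1 \leq k \leq n,\, k \neq \iota} l_k(\Ad_d(r)) \right) = \tr\!\left( \prod_{1 \leq k \leq n,\, k \neq \iota} l_k(r) \right),
\end{equation*}
so the normalizing scalar in the definition \eqref{eqn:n-idempotents-L-iota} of $L^{\iota}$ is unchanged under $r \mapsto \Ad_d(r)$.

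Combining these two observations yields
\begin{equation*}
L^{\iota}(\Ad_d(r)) = \left[\tr\!\left( \prod_{k \neq \iota} l_k(r) \right)\right]^{-1} \Ad_d\!\left( \prod_{k \neq \iota} l_k(r) \right) = \Ad_d(L^{\iota}(r)),
\end{equation*}
which is the desired equivariance. There is essentially no obstacle here; the only subtle point is the preservation of diagonal entries of $r$ under conjugation by $B$, which uses in an essential way that both $r$ and $d$ are upper triangular. The statement would fail for conjugation by a general element of $G$ (which permutes and scales the eigenvalues), but holds for $B$ precisely because the eigenvalues $r_{kk}$ appear on the diagonal and are fixed.
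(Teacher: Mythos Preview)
Your proof is correct and follows essentially the same route as the paper's: both arguments unwind the definition of $L^{\iota}$, use that $\Ad_d$ is an algebra automorphism to pull $d$ and $d^{-1}$ outside the product $\prod_{k\neq\iota} l_k(\cdot)$, and invoke trace invariance under conjugation to see that the normalizing scalar is unchanged. The only difference is that you explicitly verify the preservation of diagonal entries $(drd^{-1})_{\iota\iota}=r_{\iota\iota}$ inline, whereas the paper uses it tacitly in the first displayed line (and records it separately as Proposition~\ref{proposition:diagonal-being-invariant}).
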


\begin{proof}
For any $d\in B$, 
\[\begin{aligned} 
L^{\iota}(\Ad_d(r)) &=  \left[
\tr\left( \prod_{
1\leq k\leq n, k\not=\iota 
} 
      drd^{-1} -r_{kk}\I   
\right)\right]^{-1}   \left(\prod_{
1\leq k\leq n, k\not=\iota
}    
   drd^{-1} - r_{kk}\I
\right) \\  
&=  \left[\tr\left(  d\left( \prod_{1\leq k \leq n,k\not=\iota} 
     r-r_{kk}\I \right) 
 d^{-1} \right) \right]^{-1}  d   
   \left( 
   \prod_{1\leq k\leq n, k\not=\iota} r-r_{kk}\I 
  \right)
 d^{-1}  
\\ 
&= d \left( \left[\tr\left(   \prod_{1\leq k \leq n,k\not=\iota} 
     r-r_{kk}\I   
  \right) \right]^{-1}    
   \prod_{1\leq k\leq n, k\not=\iota} 
   \left(  r-r_{kk}\I 
 					 \right) 
  \right) d^{-1}   \\    
   &= \Ad_d(L^{\iota}(r)).    \\  
\end{aligned} 
\] 
\end{proof}

%
%
%

%
%
%
%

\subsection{Borel fixed points}\label{subsection:Borel-fixed-points} 
In this section, we prove some basic facts about the action of $B$. 
Proposition~\ref{proposition:diagonal-being-invariant} shows that the diagonal entries of an upper triangular matrix do not change under the $B$-conjugation action whilst Lemma~\ref{lemma:s-diagonal-conjugation-still-diagonal} shows that diagonal entries of a diagonal matrix are preserved under the $B$-action.

\begin{proposition}\label{proposition:diagonal-being-invariant}
For any $r$ in $\mathfrak{b}$ and for any $b$ in $B$, we have $\diag(\Ad_b(r))=\diag(r)$. 
\end{proposition}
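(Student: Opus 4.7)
The plan is to reduce the claim to the elementary fact that a triple product of upper triangular matrices has diagonal entries equal to the product of the diagonals, which immediately gives the invariance once we notice that $b^{-1}$ contributes the reciprocal of $b_{ii}$.

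First I would write out the matrix entry directly: for each $1 \leq i \leq n$,
\begin{equation*}
(\Ad_b(r))_{ii} = (brb^{-1})_{ii} = \sum_{j,k=1}^{n} b_{ij}\, r_{jk}\, (b^{-1})_{ki}.
\end{equation*}
Next I would exploit the upper-triangularity of each factor. Since $b \in B \subset GL_n(\CC)$ and $B$ is a subgroup, $b^{-1}$ is also invertible upper triangular, so $b_{ij} = 0$ when $i > j$, $r_{jk} = 0$ when $j > k$, and $(b^{-1})_{ki} = 0$ when $k > i$. A nonzero summand therefore requires $i \leq j \leq k \leq i$, forcing $i = j = k$. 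Thus the sum collapses to a single term,
\begin{equation*}
(\Ad_b(r))_{ii} = b_{ii}\, r_{ii}\, (b^{-1})_{ii} = b_{ii}\, r_{ii}\, b_{ii}^{-1} = r_{ii},
\end{equation*}
using that the diagonal entries of $b^{-1}$ are the reciprocals of the diagonal entries of $b$ (a fact which is itself a triangular back-substitution computation, or follows from the previous collapse argument applied to $bb^{-1} = \I$).

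There is no real obstacle here; the only small point worth noting is that one must cite (or record) the closure of upper triangularity under inversion in $GL_n$, so that $(b^{-1})_{ki}$ actually vanishes for $k > i$. Conceptually, the statement is the infinitesimal counterpart of the fact that $B$ acts trivially on the quotient $\mathfrak{b}/\mathfrak{u} \cong \mathfrak{h}$ under the adjoint action, since the adjoint action factors through the abelian quotient $T = B/U$ acting on its own Lie algebra. Either framing yields the same one-line conclusion, so I would present the explicit index argument as it requires no additional machinery and already prepares the reader for the analogous (but less trivial) computation for $s \in \mathfrak{b}^*$ in Lemma~\ref{lemma:s-diagonal-conjugation-still-diagonal}.
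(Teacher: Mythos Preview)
Your proof is correct and follows essentially the same approach as the paper: both compute the diagonal entries of $brb^{-1}$ directly and use the upper-triangularity of $b$, $r$, and $b^{-1}$ (together with $(b^{-1})_{ii}=b_{ii}^{-1}$) to reduce to $b_{ii}r_{ii}b_{ii}^{-1}=r_{ii}$. The only cosmetic difference is that the paper carries out the product in two stages---first computing $(br)_{\iota\gamma}$ and then $((br)b^{-1})_{\iota\gamma}$---whereas you collapse the triple sum in a single step via the chain $i\le j\le k\le i$; the content is identical.
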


\begin{proof}
Denote the entries of $b$, $r$, and $b^{-1}$ as 
$b_{\iota\gamma}$, 
$r_{\iota\gamma}$, and  
$b_{\iota\gamma}'$, respectively,  
with 
$b_{\iota\iota}'=b_{\iota\iota}^{-1}$. 
Then   
$(br)_{\iota\gamma} 
= \displaystyle{\sum_{k=1}^n b_{\iota k}r_{k\gamma}}$. 
Since $b_{\iota k}$ and $r_{\iota k}$ equal 0 whenever $\iota >k$, 
\[ (br)_{\iota\gamma} 
= \left\{ 
 \begin{aligned}
 \sum_{k=\iota}^{\gamma} b_{\iota k}r_{k\gamma}\qquad & \mbox{ if } \iota <\gamma, \\ 
   b_{\iota\iota}r_{\iota\iota}\qquad       \quad    & \mbox{ if } \iota =\gamma, \\ 
   0 \qquad\qquad                               &  \mbox{ if } \iota>\gamma. \\ 
 \end{aligned}
\right. 
\] 
Renaming 
$(br)_{\iota k}$ as $d_{\iota k}$, we obtain  
 $(brb^{-1})_{\iota \gamma}
=
\displaystyle{\sum_{k=1}^n d_{\iota k}b_{k\gamma}'}$. 
Since $d_{\iota k}$ and $b_{\iota k}'$ equal 0 whenever $\iota >k$, 
\[ (brb^{-1})_{\iota\gamma} = \left\{ 
\begin{aligned} 
\sum_{k=\iota }^{\gamma} d_{\iota k}b_{k\gamma}' \qquad 
       & \mbox{ if $\iota <\gamma$}, \\ 
  d_{\iota\iota}b_{\iota\iota }'  \qquad\quad 
       & \mbox{ if $\iota =\gamma$}, \\ 
  0\qquad\qquad 
       &  \mbox{ if $\iota>\gamma$}.  \\   
\end{aligned}
\right. 
\] 
Since $(brb^{-1})_{\iota\iota}=d_{\iota\iota}b_{\iota\iota}'=b_{\iota\iota}r_{\iota\iota}b_{\iota\iota}^{-1} = r_{\iota\iota}$, we are done.  
\end{proof}

\begin{lemma}\label{lemma:s-diagonal-conjugation-still-diagonal}
Let $s=\diag(s)$. For any $b$ in $B$, $\diag(\Ad_b^*(s))=\diag(s)$.  
\end{lemma}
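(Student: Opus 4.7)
\smallskip

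\noindent\textbf{Proof proposal.} The plan is to reduce the statement to a direct entry-wise computation, essentially parallel to the one carried out in Proposition~\ref{proposition:diagonal-being-invariant}, but now taking into account that the coadjoint action $\Ad_b^*$ on $\mathfrak{b}^* \cong \mathfrak{g}/\mathfrak{u}$ is induced from conjugation on a lifted representative followed by the projection $\mathfrak{g} \twoheadrightarrow \mathfrak{g}/\mathfrak{u}$. Since the canonical projection $\overline{\,\cdot\,}$ only discards strictly upper-triangular entries, the diagonal coordinates of $\Ad_b^*(s)$ agree with the diagonal coordinates of $bsb^{-1}$ computed in $\mathfrak{g}$. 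Thus it suffices to show that $(bsb^{-1})_{\iota\iota} = s_{\iota\iota}$ for all $\iota$ whenever $s$ is a diagonal matrix and $b \in B$.

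First I would lift $s = \diag(s_{11}, \ldots, s_{nn})$ to $\mathfrak{g}$ and compute $(bs)_{\iota\gamma}$. Writing $b = (b_{\iota\gamma})$ with $b_{\iota\gamma} = 0$ for $\iota > \gamma$, the product simplifies to $(bs)_{\iota\gamma} = \sum_k b_{\iota k} s_{k\gamma} = b_{\iota\gamma} s_{\gamma\gamma}$ because only the term $k = \gamma$ survives. Next I would right-multiply by $b^{-1} \in B$, whose entries $b_{\iota\gamma}'$ also vanish below the diagonal, to obtain
\[
(bsb^{-1})_{\iota\iota} = \sum_{k=1}^n b_{\iota k}\, s_{kk}\, b_{k\iota}'.
\]
The upper-triangularity of $b$ forces $k \geq \iota$, while the upper-triangularity of $b^{-1}$ forces $k \leq \iota$, so the sum collapses to $k = \iota$. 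Since $b_{\iota\iota}' = b_{\iota\iota}^{-1}$, we find $(bsb^{-1})_{\iota\iota} = b_{\iota\iota} s_{\iota\iota} b_{\iota\iota}^{-1} = s_{\iota\iota}$.

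Projecting back to $\mathfrak{b}^* \cong \mathfrak{g}/\mathfrak{u}$ does not affect these diagonal entries, so $\diag(\Ad_b^*(s)) = \diag(s)$, as required. I do not expect any serious obstacle here: the result is a mechanical consequence of the vanishing pattern of upper-triangular matrices and the fact that $\mathfrak{u}$ lies in the kernel of the diagonal-extraction map. The only point to be careful about is making explicit that $\overline{\,\cdot\,}:\mathfrak{g}^* \to \mathfrak{b}^*$ preserves the diagonal coordinates under the identification $\mathfrak{b}^* \cong \mathfrak{g}/\mathfrak{u}$ used throughout the paper, so that computing in $\mathfrak{g}$ rather than $\mathfrak{b}^*$ is legitimate.
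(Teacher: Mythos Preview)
Your proof is correct. The paper, however, takes a shorter route: it simply observes that a diagonal matrix lies in $\mathfrak{h}=\mathfrak{b}\cap\mathfrak{b}^*$, so one may regard $s$ as an element of $\mathfrak{b}$ and invoke Proposition~\ref{proposition:diagonal-being-invariant} directly, rather than repeating the entry-wise computation. Your argument is essentially the specialization of that proposition's proof to diagonal $r$, together with the (correct) remark that the projection $\overline{\,\cdot\,}$ does not disturb diagonal entries; the paper's proof buys brevity by recycling the general result, while yours has the virtue of being self-contained and making the role of the projection explicit.
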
 

\begin{proof} 
Restrict $r$ in Proposition~\ref{proposition:diagonal-being-invariant} to those in  $\mathfrak{b}\cap\mathfrak{b}^*=\mathfrak{h}$. 
\end{proof}

For a diagonal matrix $r$ in $\mathfrak{b}$ and for a general $b$ in $B$, 
$brb^{-1}$ need not equal $r$ (that is, $brb^{-1}$ need not be diagonal). 
However, for a diagonal matrix $s$ in $\mathfrak{b}^*$ and for any $b$ in $B$, $\Ad_b^*(s) = s$ (i.e., $\Ad_b^*(\diag(s)) = \diag(s)$) always holds in $\mathfrak{b}^* = \mathfrak{b}/\mathfrak{u}$ since the strictly upper triangular part is killed in $\mathfrak{b}^*$.

\begin{proposition}\label{proposition:diag-entries-r-is-trace}
For each $\iota$, 
$r_{\iota\iota} = \tr(L^{\iota}r)$.   
\end{proposition}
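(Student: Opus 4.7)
The plan is to combine two results already established in the excerpt: Lemma~\ref{lemma:lkr-times-r-identity}, which identifies the diagonal entries of $L^\iota r$, and Lemma~\ref{lemma:lkrmatrices-orthogonal-rss}, which records the normalization $\tr(L^\iota)=1$ built into the definition \eqref{eqn:n-idempotents-L-iota}. The claim will then drop out by a one-line trace computation, so no new computation on matrix entries is required.

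Concretely, I would start by expanding the trace as a sum over diagonal entries, $\tr(L^\iota r) = \sum_{\gamma=1}^n (L^\iota r)_{\gamma\gamma}$. Then I would invoke Lemma~\ref{lemma:lkr-times-r-identity} to rewrite each summand as $(L^\iota r)_{\gamma\gamma} = r_{\iota\iota}\,L^\iota_{\gamma\gamma}$, noting that the factor $r_{\iota\iota}$ is independent of $\gamma$ and may be pulled out of the sum. This yields
\begin{equation*}
\tr(L^\iota r) \;=\; r_{\iota\iota}\sum_{\gamma=1}^n L^\iota_{\gamma\gamma} \;=\; r_{\iota\iota}\,\tr(L^\iota).
\end{equation*}
Finally, applying $\tr(L^\iota)=1$ from Lemma~\ref{lemma:lkrmatrices-orthogonal-rss} gives $\tr(L^\iota r) = r_{\iota\iota}$, as desired.

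There is essentially no obstacle here: the proposition is a clean corollary of the two lemmas just cited. The only thing worth flagging is that one must be careful that the trace normalization factor in the definition of $L^\iota$ is nonzero, which is guaranteed by the standing assumption $r\in\mathfrak{b}^{rss}$ (so that the eigenvalues $r_{kk}$ are pairwise distinct and the product $\prod_{k\neq\iota} l_k(r)$ has nonzero trace, namely $\prod_{k\neq\iota}(r_{\iota\iota}-r_{kk})$). Under that standing assumption, the proof consists solely of the three-line manipulation above.
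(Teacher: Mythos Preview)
Your proposal is correct and matches the paper's own proof essentially line for line: the paper writes $\tr(L^{\iota}r)=\tr(r_{\iota\iota}L^{\iota})=r_{\iota\iota}$, citing Lemma~\ref{lemma:lkr-times-r-identity} for the first equality and the explicit diagonal entries $L^{\iota}_{\gamma\gamma}=\delta_{\gamma\iota}$ for the second. The only cosmetic difference is that you invoke $\tr(L^{\iota})=1$ from Lemma~\ref{lemma:lkrmatrices-orthogonal-rss}, whereas the paper restates that same diagonal computation inline.
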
  

\begin{proof}
We have 
$\tr(L^{\iota}r)=\tr(r_{\iota\iota}L^{\iota})=r_{\iota\iota}$ where the first equality holds by Lemma~\ref{lemma:lkr-times-r-identity} and the second equality holds by direct calculation ($L_{\gamma\gamma}^{\iota}$ is 0 if $\gamma\not=\iota$  and equals 1 if $\gamma = \iota$).  
\end{proof}

\begin{proposition}\label{proposition:difference-of-r-is-trace}
For each $1\leq \gamma < \nu\leq n$, 
$(r_{\nu\nu} - r_{\gamma\gamma})^{-1} = [\tr((L^{\nu}-L^{\gamma})r)]^{-1}$.  
\end{proposition}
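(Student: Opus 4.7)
The plan is to apply Proposition~\ref{proposition:diag-entries-r-is-trace} twice and use the linearity of the trace. First I would invoke Proposition~\ref{proposition:diag-entries-r-is-trace} with $\iota = \nu$ and $\iota = \gamma$ to get $r_{\nu\nu} = \tr(L^{\nu} r)$ and $r_{\gamma\gamma} = \tr(L^{\gamma} r)$. Subtracting and using linearity of the trace yields
\[
r_{\nu\nu} - r_{\gamma\gamma} = \tr(L^{\nu} r) - \tr(L^{\gamma} r) = \tr\bigl( (L^{\nu} - L^{\gamma}) r \bigr).
\]

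Next I would justify that taking reciprocals is legitimate. Since $r \in \mathfrak{b}^{rss}$ has pairwise distinct eigenvalues, the standing assumption at the start of Section~\ref{section:preliminaries} guarantees $r_{\nu\nu} \neq r_{\gamma\gamma}$ whenever $\gamma < \nu$, so the left-hand side is a nonzero complex number. Hence the right-hand side is nonzero as well and we may invert both sides to conclude $(r_{\nu\nu} - r_{\gamma\gamma})^{-1} = [\tr((L^{\nu} - L^{\gamma}) r)]^{-1}$.

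There is no real obstacle here; the proposition is a one-line corollary of Proposition~\ref{proposition:diag-entries-r-is-trace} together with the regular-semisimple hypothesis on $r$ that ensures invertibility. The only thing to check carefully is that the denominators $\tr(\prod_{k \neq \iota} l_k(r))$ implicit in the definition of each $L^{\iota}$ are nonzero, which follows from the distinct-eigenvalue assumption on $r$ and is already tacitly used throughout Section~\ref{subsection:orthogonal-projection-operators}.
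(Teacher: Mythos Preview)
Your proof is correct and matches the paper's own argument essentially line for line: apply Proposition~\ref{proposition:diag-entries-r-is-trace} for $\iota=\nu$ and $\iota=\gamma$, subtract using linearity of trace, and invert. Your explicit remark about invertibility via the regular-semisimple hypothesis is a welcome clarification that the paper leaves implicit.
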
 

\begin{proof}
Applying Proposition~\ref{proposition:diag-entries-r-is-trace}, we have  $\tr(L^{\nu}r-L^{\gamma}r)$ $=$ $\tr(L^{\nu}r)-\tr(L^{\gamma}r)$ $=$ $r_{\nu\nu}-r_{\gamma\gamma}$. 
\end{proof}


\begin{proposition}\label{proposition:distinct-ev-diagonalizable}
Let $r$ in $\mathfrak{b}^{rss}$ and let $E_{\iota\iota}$ have 1 in $(\iota,\iota)$-entry and $0$ elsewhere. 
There exists 
\begin{equation}\label{eqn:diagonalizingmatrix-B}
b = \sum_{\iota =1}^n  
 E_{\iota\iota} \: \diag(b) L^{\iota} \in B 
\end{equation}  
which diagonalizes $r$. 
Furthermore, the inverse of this Borel element has matrix representation 
\begin{equation}\label{eqn:diagonalizingmatrix-inverse-of-B}
b^{-1} = \sum_{\iota =1}^n  L^{\iota} 
\diag(b)^{-1} \: E_{\iota\iota}. 
\end{equation}
\end{proposition}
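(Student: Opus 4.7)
The plan is to exploit the fact that each $L^{\iota}$ is the spectral projector of $r$ onto its $r_{\iota\iota}$-eigenline. Being a scalar multiple of $\prod_{k\neq \iota}(r-r_{kk}\I)$, the operator $L^{\iota}$ commutes with $r$, and since $\prod_{k}(r-r_{kk}\I)$ is the characteristic polynomial of the upper triangular matrix $r$ evaluated at $r$, Cayley--Hamilton forces $(r-r_{\iota\iota}\I)L^{\iota}=0$, giving
\[ rL^{\iota} \;=\; L^{\iota}r \;=\; r_{\iota\iota}L^{\iota}. \]
(Alternatively this follows from the definition together with Lemma~\ref{lemma:lkrmatrices-orthogonal-rss}.)

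Next I would verify that the proposed $b$ lies in $B$. Since $E_{\iota\iota}$ retains only the $\iota$-th row of its right multiplicand and $\diag(b)$ then scales that row by $b_{\iota\iota}$, the $\iota$-th row of $b$ equals $b_{\iota\iota}$ times the $\iota$-th row of $L^{\iota}$. By the vanishing pattern recorded in Lemma~\ref{lemma:lkrmatrices-orthogonal-rss}, namely $L^{\iota}_{\iota\mu}=0$ for $\mu<\iota$ and $L^{\iota}_{\iota\iota}=1$, this row is zero in its first $\iota-1$ positions with $b_{\iota\iota}$ in position $\iota$, so $b$ is upper triangular with diagonal $(b_{11},\ldots,b_{nn})$ and hence lies in $B$ as soon as the $b_{\iota\iota}$ are taken nonzero. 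To see that $b$ diagonalizes $r$ it suffices to show $br=\diag(r)\,b$: using $L^{\iota}r=r_{\iota\iota}L^{\iota}$ together with $E_{\iota\iota}\diag(r)=\diag(r)E_{\iota\iota}=r_{\iota\iota}E_{\iota\iota}$, I obtain
\[ br \;=\; \sum_{\iota}E_{\iota\iota}\,\diag(b)\,L^{\iota}r \;=\; \sum_{\iota}r_{\iota\iota}E_{\iota\iota}\,\diag(b)\,L^{\iota} \;=\; \diag(r)\,b. \]

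For the inverse formula, set $b':=\sum_{\iota}L^{\iota}\,\diag(b)^{-1}\,E_{\iota\iota}$ and compute $bb'$. The orthogonality relations $L^{\iota}L^{\gamma}=\delta_{\iota\gamma}L^{\iota}$ from Lemma~\ref{lemma:lkrmatrices-orthogonal-rss}, together with the bookkeeping identities $E_{\iota\iota}\diag(b)=b_{\iota\iota}E_{\iota\iota}$ and $\diag(b)^{-1}E_{\iota\iota}=b_{\iota\iota}^{-1}E_{\iota\iota}$, collapse the double sum to $\sum_{\iota}E_{\iota\iota}L^{\iota}E_{\iota\iota}$; a direct index calculation (using only $L^{\iota}_{\iota\iota}=1$) yields $E_{\iota\iota}L^{\iota}E_{\iota\iota}=E_{\iota\iota}$, whence $bb'=\sum_{\iota}E_{\iota\iota}=\I$ and $b^{-1}=b'$. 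There is no substantive obstacle in any of this: once $L^{\iota}$ is identified as a spectral projector for $r$, every remaining step is elementary matrix-unit bookkeeping using the orthogonality and vanishing patterns of the $L^{\iota}$'s already established in Section~\ref{subsection:orthogonal-projection-operators}.
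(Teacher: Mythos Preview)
Your proof is correct and, for the inverse formula, essentially identical to the paper's: both compute $bb'$ by collapsing the double sum via the orthogonality $L^{\iota}L^{\gamma}=\delta_{\iota\gamma}L^{\iota}$ of Lemma~\ref{lemma:lkrmatrices-orthogonal-rss} and then use $L^{\iota}_{\iota\iota}=1$ to finish.

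For the diagonalization step you take a cleaner route than the paper. You invoke Cayley--Hamilton to get the global matrix identity $rL^{\iota}=r_{\iota\iota}L^{\iota}$, after which $br=\diag(r)\,b$ is a one-line computation. The paper instead first commutes $r$ past $\prod_{k\neq\iota}l_k(r)$, then argues row by row: using $E_{\iota\iota}\diag(b)=b_{\iota\iota}E_{\iota\iota}$ and the vanishing pattern of $L^{\iota}$ from Corollary~\ref{cor:product-of-r-minus-rkk}, it shows that $E_{\iota\iota}\,r\,\prod_{k\neq\iota}l_k(r)$ collapses to $r_{\iota\iota}E_{\iota\iota}\prod_{k\neq\iota}l_k(r)$. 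Your spectral-projector identity absorbs this bookkeeping into a single algebraic fact, and also makes transparent \emph{why} the formula for $b$ works (each row of $b$ is a left eigencovector of $r$). You also explicitly verify $b\in B$, which the paper leaves implicit.
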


One may replace $\diag(b)$ with $b$ on the right-hand side of \eqref{eqn:diagonalizingmatrix-B} 
and still obtain a Borel matrix that diagonalizes a regular semisimple element $r$. 

\begin{proof}
For $r=(r_{ij})$, 
we will show $brb^{-1}=\diag(r)$, or equivalently, $br=\diag(r)b$. So 
\[ 
\begin{aligned} 
br &=  \sum_{\iota =1}^n  
 \left[\tr\left( \prod_{
1\leq k\leq n, k\not=\iota 
} l_k(r) \right)\right]^{-1} E_{\iota\iota} \: \diag(b) \left(\prod_{
1\leq k\leq n, k\not=\iota
} l_k(r) \right) r\\
&=\sum_{\iota =1}^n  
 \left[ \tr\left( \prod_{
1\leq k\leq n, k\not=\iota 
} l_k(r) \right)\right]^{-1} b_{\iota\iota} E_{\iota\iota} \: r \left(\prod_{
1\leq k\leq n, k\not=\iota
} l_k(r) \right) \\ 
&=\sum_{\iota =1}^n  
 \left[ \tr\left( \prod_{
1\leq k\leq n, k\not=\iota 
} l_k(r) \right)\right]^{-1} \left[ 
\begin{array}{c} 
0 \\ 
\vdots \\ 
b_{\iota\iota} \vec{r}_{\iota} \\ 
\vdots \\ 
0 \\  
\end{array}
\right] \left(\prod_{
1\leq k\leq n, k\not=\iota
} l_k(r) \right) \mbox{ where }\vec{r}_{\iota}\mbox{ is the }
\iota\mbox{-th} \mbox{ row of } r \\ 
&= \sum_{\iota =1}^n  
 \left[ \tr\left( \prod_{
1\leq k\leq n, k\not=\iota 
} l_k(r) \right)\right]^{-1} b_{\iota\iota} r_{\iota\iota} E_{\iota\iota}  \left(\prod_{
1\leq k\leq n, k\not=\iota
} l_k(r) \right)\mbox{ by Corollary~\ref{cor:product-of-r-minus-rkk}} \\ 
&= \sum_{\iota =1}^n  
\left[ \tr\left( \prod_{
1\leq k\leq n, k\not=\iota 
} l_k(r) \right)\right]^{-1} \diag(r) E_{\iota\iota} \diag(b) \left(\prod_{
1\leq k\leq n, k\not=\iota
} l_k(r) \right) \\ 
&= \diag(r)b. \\
\end{aligned} 
\]     

Now, we will prove that $b b^{-1}=\I$. 
The $\iota$-th row of $b$ is $e_{\iota}^* \diag(b)L^{\iota}$ and the 
$\gamma$-th column of $b^{-1}$ is 
$L^{\gamma}\diag(b)^{-1}e_{\gamma}$,  
  where 
   $e_{\gamma}$ is an elementary column vector 
and $e_{\iota}^*$ is an elementary covector. 
By Lemma~\ref{lemma:lkrmatrices-orthogonal-rss},  
\[
e_{\iota}^* \diag(b)L^{\iota}  L^{\gamma}\diag(b)^{-1}e_{\gamma} =
\left\{  
\begin{aligned} 
1 \quad  &\mbox{ if }\iota=\gamma \\  
0 \quad  &\mbox{ if }\iota\not=\gamma.  \\ 
\end{aligned} 
\right. 
\] 

\end{proof}

\begin{remark}
The Borel matrix \eqref{eqn:diagonalizingmatrix-B} has coordinates 
\[ b_{\iota\gamma} = \left\{ 
\begin{aligned} 
0 \;\;\;\;\;\;\;\;\;\;\;\;\;\;\;\;\;\;\;\;\;\;\;\;\;\;\;\;\;\;\; \;\;\;\;\;\;\;\;\;\;\;\;\;\;\;\;\;\;\;\;\;\;\;\;\;\;\;\;\;\;\;\;\;\;\; &\mbox{ if } \iota >\gamma,  \\
b_{\iota\iota}\;\;\;\;\;\;\;\;\;\;\;\;\;\;\;\;\;\;\;\;\;\;\;\;\;\;\;\;\;\; \;\;\;\;\;\;\;\;\;\;\;\;\;\;\;\;\;\;\;\;\;\;\;\;\;\;\;\;\;\;\;\;\;\;\; &\mbox{ if } \iota = \gamma, \\ 
b_{\iota\iota}\left(
\dfrac{r_{\iota\gamma}}{r_{\iota\iota}-r_{\gamma\gamma}}  
+
\sum_{v=1}^{\gamma-\iota-1}\sum_{\iota<k_1<\ldots <k_{v}<\gamma} 
\dfrac{r_{\iota k_1}r_{k_v \gamma} }{(r_{\iota\iota}-r_{k_1k_1})(r_{\iota\iota}-r_{\gamma\gamma})} 
\prod_{u=1}^{v-1} \dfrac{r_{k_uk_{u+1}}}{r_{\iota\iota}-r_{k_{u+1}k_{u+1}}} 
\right) &\mbox{ if } \iota < \gamma, \\ 
\end{aligned}
\right. 
\] 
and the inverse \eqref{eqn:diagonalizingmatrix-inverse-of-B} of $b$ has coordinates 
\[ (b^{-1})_{\iota\gamma} = \left\{ 
\begin{aligned} 
0 \;\;\;\;\;\;\;\;\;\;\;\;\;\;\;\;\;\;\;\;\;\;\;\;\;\;\;\;\;\;\; \;\;\;\;\;\;\;\;\;\;\;\;\;\;\;\;\;\;\;\;\;\;\;\;\;\;\;\;\;\;\;\;\;\;\; &\mbox{ if } \iota >\gamma, \\
b_{\gamma\gamma}^{-1}\;\;\;\;\;\;\;\;\;\;\;\;\;\;\;\;\;\;\;\;\;\;\;\;\;\;\;\;\;\; \;\;\;\;\;\;\;\;\;\;\;\;\;\;\;\;\;\;\;\;\;\;\;\;\;\;\;\;\;\;\;\;\;\;\; &\mbox{ if } \iota = \gamma, \\ 
b_{\gamma\gamma}^{-1}\left(\dfrac{r_{\iota\gamma}}{
r_{\gamma\gamma}-r_{\iota\iota}} 
+
 \sum_{v=1}^{\gamma-\iota-1}
 \sum_{\iota <k_1<\ldots <k_{v}<\gamma} 
\dfrac{r_{\iota k_1}r_{k_v \gamma} }{
(r_{\gamma\gamma}-r_{k_vk_v})
(r_{\gamma\gamma}-r_{\iota\iota})} 
\prod_{u=1}^{v-1} \dfrac{r_{k_uk_{u+1}}}{r_{\gamma\gamma}-r_{k_{u}k_{u}}} 
\right) &\mbox{ if } \iota < \gamma. \\ 
\end{aligned}
\right.  
\]  
\end{remark}

\section{Proof of Proposition \texorpdfstring{$\ref{proposition:map-from-B-rss-to-complex-space}$}{coordinate function of s}}\label{section:proof-of-Brss-complex-space}

We will now prove Proposition~\ref{proposition:map-from-B-rss-to-complex-space}. 

\begin{proposition}\label{proposition:deriving-diag-coordinates-of-bsb-inverse}
For each $\iota$, 
\[ (\Ad_b^*(s))_{\iota\iota} = \tr(L^{\iota}s), 
\] where 
$b$ is given in \eqref{eqn:diagonalizingmatrix-B}.  
\end{proposition}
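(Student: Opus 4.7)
The plan is to compute $(\Ad_b^*(s))_{\iota\iota}$ directly, using the explicit formulas \eqref{eqn:diagonalizingmatrix-B} and \eqref{eqn:diagonalizingmatrix-inverse-of-B} for $b$ and $b^{-1}$, and then to convert the resulting expression to $\tr(L^{\iota}s)$ via the idempotent properties of $L^{\iota}$ from Lemma \ref{lemma:lkrmatrices-orthogonal-rss}. Since the diagonal entries of an element of $\mathfrak{g}$ are unaffected by the projection $\mathfrak{g}\twoheadrightarrow\mathfrak{g}/\mathfrak{u}\cong\mathfrak{b}^{*}$, it suffices to compute $(bsb^{-1})_{\iota\iota}=e_{\iota}^{*}(bsb^{-1})e_{\iota}$.

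First I would extract the $\iota$-th row of $b$ and the $\iota$-th column of $b^{-1}$. Using $e_{\iota}^{*}E_{\gamma\gamma}=\delta_{\iota\gamma}e_{\iota}^{*}$ and $E_{\gamma\gamma}e_{\iota}=\delta_{\iota\gamma}e_{\iota}$, the sums in \eqref{eqn:diagonalizingmatrix-B} and \eqref{eqn:diagonalizingmatrix-inverse-of-B} collapse to a single term, yielding
\[
e_{\iota}^{*}b=b_{\iota\iota}\,e_{\iota}^{*}L^{\iota},\qquad b^{-1}e_{\iota}=b_{\iota\iota}^{-1}\,L^{\iota}e_{\iota}.
\]
The scalars $b_{\iota\iota}$ and $b_{\iota\iota}^{-1}$ cancel, so
\[
(bsb^{-1})_{\iota\iota}=e_{\iota}^{*}L^{\iota}\,s\,L^{\iota}e_{\iota}=(L^{\iota}sL^{\iota})_{\iota\iota}.
\]

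Next I would show $(L^{\iota}sL^{\iota})_{\iota\iota}=\tr(L^{\iota}sL^{\iota})$ by verifying that every other diagonal entry vanishes. By the vanishing pattern \eqref{gather:L-iota-operator-options}, $L^{\iota}_{\gamma\alpha}=0$ unless $\gamma\leq\iota\leq\alpha$, and $L^{\iota}_{\beta\gamma}=0$ unless $\beta\leq\iota\leq\gamma$; combining these forces $\gamma=\iota$ for $(L^{\iota}sL^{\iota})_{\gamma\gamma}$ to be nonzero. Therefore
\[
(L^{\iota}sL^{\iota})_{\iota\iota}=\tr(L^{\iota}sL^{\iota})=\tr((L^{\iota})^{2}s)=\tr(L^{\iota}s),
\]
where the last two equalities use the cyclic property of the trace and the idempotency $(L^{\iota})^{2}=L^{\iota}$ from Lemma \ref{lemma:lkrmatrices-orthogonal-rss}. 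Chaining these identities yields the claim.

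The computation is essentially a bookkeeping exercise; the only mildly subtle step is the diagonal-vanishing argument that converts $(L^{\iota}sL^{\iota})_{\iota\iota}$ to a trace, which is what allows the $\iota$-th diagonal entry of a conjugate of $s$ to be expressed as a \emph{$B$-invariant} function of $r$ and $s$. This $B$-invariance, in turn, is exactly what makes Proposition \ref{proposition:map-from-B-rss-to-complex-space} give a well-defined formula for $s_{\iota\iota}'$ in terms of $r$ and $s$, independent of the choice of diagonalizing $b$.
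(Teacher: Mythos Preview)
Your proof is correct and is organized more cleanly than the paper's. The paper expands $(\overline{bsb^{-1}})_{\iota\iota}=\sum_{\gamma\le\iota}(bs)_{\iota\gamma}(b^{-1})_{\gamma\iota}$, then asserts the coordinate identity $(bs)_{\iota\gamma}(b^{-1})_{\gamma\iota}=(L^{\iota}s)_{\gamma\gamma}$ for each $\gamma$ (the verification being an explicit entrywise computation, essentially re-deriving the idempotency relation $L^{\iota}_{\gamma\iota}L^{\iota}_{\iota\mu}=L^{\iota}_{\gamma\mu}$ from Lemma~\ref{lemma:lkrmatrices-orthogonal-rss}), and finally invokes Corollary~\ref{cor:productoflkr-r} to complete the sum to a trace. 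You instead work at the matrix level: the row/column extraction gives $(bsb^{-1})_{\iota\iota}=(L^{\iota}sL^{\iota})_{\iota\iota}$ in one line, the vanishing pattern \eqref{gather:L-iota-operator-options} shows this single entry \emph{is} the trace, and then cyclicity plus $(L^{\iota})^{2}=L^{\iota}$ finishes. Your route makes the role of Lemma~\ref{lemma:lkrmatrices-orthogonal-rss} transparent and avoids any coordinate bookkeeping, at the modest cost of passing through the auxiliary object $L^{\iota}sL^{\iota}$; the paper's route, by identifying each summand with $(L^{\iota}s)_{\gamma\gamma}$, gives slightly finer information termwise but hides the idempotency behind a computation.
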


\begin{proof}  
Fix $\iota$ and let $\gamma \leq \iota$.   
Since  
\[   
\begin{aligned} 
(bs)_{\iota\gamma} &= \sum_{\mu=\iota}^n b_{\iota\mu}s_{\mu\gamma} \\ 
&= b_{\iota\iota}s_{\iota\gamma}+ \sum_{\mu=\iota+1}^n b_{\iota\iota}
\left( \dfrac{ r_{\iota\mu} }{r_{\iota\iota}-r_{\mu\mu} } 
+ \sum_{v=1}^{\mu -\iota-1} \sum_{\iota < k_1 < \ldots < k_v < \mu} 
\dfrac{r_{\iota k_1}r_{k_v \mu} }{(r_{\iota\iota}-r_{k_1 k_1})(r_{\iota\iota}-r_{\mu\mu}) } \prod_{u=1}^{v-1} 
\dfrac{r_{k_u k_{u+1}}}{r_{\iota\iota}-r_{k_{u+1}k_{u+1}} } 
\right) s_{\mu\gamma},  \\ 
\end{aligned} 
\]   
we have  
$\displaystyle{
(\overline{bsb^{-1}})_{\iota\iota} 
= \sum_{\gamma=1}^{\iota} (bs)_{\iota\gamma} (b^{-1})_{\gamma\iota}
}$ 
since $(bs)_{\iota\gamma} (b^{-1})_{\gamma\iota}=0$ for each $\gamma >\iota$.  

Now considering the product of matrices 
$L^{\iota}s$, 
$(L^{\iota}s)_{\gamma\gamma}=0$
for all $\gamma > \iota$ by Corollary~\ref{cor:productoflkr-r}. 
Since  
\[ (bs)_{\iota\gamma} (b^{-1})_{\gamma\iota} =
 \left( 
 L^{\iota}s 
\right)_{\gamma\gamma}  
 \] for each $\gamma$, we conclude 
 \[ (\overline{bsb^{-1}})_{\iota\iota} = \sum_{\gamma=1}^{\iota} 
 \underbrace{(bs)_{\iota\gamma} (b^{-1})_{\gamma\iota}}_{
 (L^{\iota}s)_{\gamma\gamma}  
 } =   \tr(L^{\iota}s) = s_{\iota\iota}'.  
\]   
\end{proof}

\begin{remark}\label{remark:deriving-diag-coordinates-of-bsb-inverse-opposite-order} 
Suppose $\overline{bsb^{-1}}$ is multiplied from right to left than from left to right. 
That is,  
for $\gamma\geq \iota$, 
\[ (sb^{-1})_{\gamma\iota} = \sum_{\mu=1}^{\iota} s_{\gamma\mu}(b^{-1})_{\mu\iota}
\] with 
\[ (\overline{bsb^{-1}})_{\iota\iota} = \sum_{\gamma=\iota}^{n} b_{\iota\gamma}
(sb^{-1})_{\gamma\iota}, 
\] 
   then we have 
\[ b_{\iota\gamma}
(sb^{-1})_{\gamma\iota} 
= 
\left( s L^{\iota} 
\right)_{\gamma\gamma}, 
\] 
and similar as before, for $\gamma < \iota$, 
\[ b_{\iota\gamma}
(sb^{-1})_{\gamma\iota} =0 
= 
\left( sL^{\iota}
 \right)_{\gamma\gamma}.  
\] 
\end{remark}

\section{\texorpdfstring{$B$}{B}-orbits on the \texorpdfstring{$rss$}{rss}-locus}\label{section:B-orbits-rss-locus}

We now analyze the affine quotient of $\mu^{-1}(0)^{rss}$ by $B$. 

\begin{proposition}\label{proposition:s-in-HR-is-diagonalizable}
If $(r,s,i,j)$ is in $\mu^{-1}(0)^{rss}$ with $i$ or $j$ equaling zero, then $s$ is diagonal.  
\end{proposition}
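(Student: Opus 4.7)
The plan is to reduce to the case where $r$ is diagonal via Proposition~\ref{proposition:distinct-ev-diagonalizable}, and then read off the vanishing of the strictly lower triangular part of $s$ directly from the moment map equation.

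Since $i = 0$ or $j = 0$, the outer product $ij$ vanishes as an $n \times n$ matrix. The moment map equation $\overline{[r,s] + ij} = 0$ in $\mathfrak{b}^* = \mathfrak{g}/\mathfrak{u}$ therefore collapses to $[r,s] \in \mathfrak{u}$. By Proposition~\ref{proposition:distinct-ev-diagonalizable} there exists $b \in B$ with $\Ad_b(r) = \diag(r)$. By $B$-equivariance of $\mu$, the transformed quadruple $(\diag(r), \Ad_b^*(s), bi, jb^{-1})$ still lies in $\mu^{-1}(0)^{rss}$ and still satisfies $bi = 0$ or $jb^{-1} = 0$. Setting $s' := \Ad_b^*(s)$ and picking any matrix representative for it in $\mathfrak{g}$, we therefore have $[\diag(r), s'] \in \mathfrak{u}$; the condition is independent of the choice of representative since $\mathfrak{u}$ is an ideal of $\mathfrak{b}$, so $[\diag(r), \mathfrak{u}] \subseteq \mathfrak{u}$.

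A direct entrywise computation gives $[\diag(r), s']_{\iota\gamma} = (r_{\iota\iota} - r_{\gamma\gamma})\, s'_{\iota\gamma}$. The membership $[\diag(r), s'] \in \mathfrak{u}$ forces this quantity to vanish for all $\iota \geq \gamma$. The case $\iota = \gamma$ is automatic, while for $\iota > \gamma$ the regular semisimple hypothesis yields $r_{\iota\iota} \neq r_{\gamma\gamma}$, so $s'_{\iota\gamma} = 0$. Hence our chosen representative of $s'$ has zero strictly lower triangular part, which is exactly to say that $s' \in \mathfrak{b}^*$ is diagonal.

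Finally, by the remark immediately following Lemma~\ref{lemma:s-diagonal-conjugation-still-diagonal}, diagonal elements of $\mathfrak{b}^*$ are fixed pointwise by the coadjoint $B$-action. Since $(\Ad_b^*)^{-1} = \Ad_{b^{-1}}^*$, this gives $s = \Ad_{b^{-1}}^*(s') = s'$, which is diagonal. The argument has no genuine obstacle; the only care required is bookkeeping with representatives of elements of $\mathfrak{g}/\mathfrak{u}$ and the observation that $[\,\cdot\,,\,\cdot\,]$ modulo $\mathfrak{u}$ depends only on the class of $s$ in $\mathfrak{b}^*$.
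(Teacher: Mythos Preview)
Your proof is correct, but it takes a genuinely different route from the paper. The paper argues by strong induction on $n$: the base case $n=2$ is a direct computation of $[r,s]$, and the inductive step writes the $(n{+}1)\times(n{+}1)$ bracket $[r',s']$ as a block sum of the $n\times n$ bracket $[r,s]$ and a matrix built from the last row and column; one then clears the entries $s_{n+1,\gamma}$ one at a time starting from the bottom-left corner and invokes the induction hypothesis on the $n\times n$ block. Your argument instead first diagonalizes $r$ via Proposition~\ref{proposition:distinct-ev-diagonalizable}, after which the bracket collapses to $(r_{\iota\iota}-r_{\gamma\gamma})s'_{\iota\gamma}$ and the vanishing is immediate; the extra ingredient you need is the fixed-point property of diagonal elements under $\Ad_b^*$ (the remark after Lemma~\ref{lemma:s-diagonal-conjugation-still-diagonal}) to transport the conclusion back to the original $s$. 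Your approach is shorter and reuses machinery already set up in Section~\ref{section:preliminaries}; the paper's inductive proof is more self-contained and avoids invoking the explicit diagonalizing element.
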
 

We prove Proposition~\ref{proposition:s-in-HR-is-diagonalizable} using strong induction. 

\begin{proof} 
Let $r=(r_{\iota\gamma})$ and $s=(s_{\iota\gamma })$. 
Let $n=2$. Then 
\begin{align*} 
[r,s] &= \left( 
\begin{array}{cc} 
r_{11} & r_{12} \\ 
 0 & r_{22} \\ 
\end{array}
\right) 
\left( 
\begin{array}{cc} 
s_{11} & 0 \\ 
 s_{21} & s_{22} \\ 
\end{array}
\right) 
-
\left( 
\begin{array}{cc} 
s_{11} & 0 \\ 
 s_{21} & s_{22} \\ 
\end{array}
\right) 
\left( 
\begin{array}{cc} 
r_{11} & r_{12} \\ 
 0 & r_{22} \\ 
\end{array}
\right)  \\
&= \left( 
\begin{array}{cc} 
r_{11}s_{11}+r_{12}s_{21} & * \\ 
 r_{22}s_{21} & r_{22}s_{22} \\ 
\end{array}
\right) 
-\left( 
\begin{array}{cc} 
r_{11}s_{11} & * \\ 
 r_{11}s_{21} & r_{22}s_{22}+s_{21}r_{12} \\ 
\end{array}
\right)  \\
&= \left( 
\begin{array}{cc} 
r_{12}s_{21} & * \\ 
 (r_{22}-r_{11})s_{21} & -s_{21}r_{12} \\ 
\end{array}
\right).  
\end{align*}
Since $r\in\mathfrak{b}^{rss}$, $s_{21}=0$ and since $s=\diag(s)$, we are done.  

Now assume that the proposition holds when 
the rank of $r$ is $n$ (so $s$ is diagonal). 
Consider $r'$ and $s'$ where 
\[ r' = 
\left( \begin{array}{cc}
r & r_{\iota, n+1} \\ 
0 & r_{n+1,n+1} \\ 
\end{array}
\right) 
\mbox{ and }
s' = 
\left( \begin{array}{cc}
s & 0    \\ 
s_{n+1, \gamma} & s_{n+1, n+1} \\ 
\end{array}
\right),  
\] 
$(n+1)\times (n+1)$ matrices whose upper left blocks are $n\times n$ matrices $r$ and $s$, respectively, 
with $(r',s',i,j)\in \mu^{-1}(0)^{rss}$ and $i=0$ or $j=0$,
$i\in \mathbb{C}^{n+1}$, $j\in (\mathbb{C}^{n+1})^*$. 
Note that $(r_{\iota , n+1})$ is an $n \times 1$ matrix and 
$s_{n+1, \gamma}$ is a $1\times n$ matrix.  
Since  $\mu(r',s',i,j)=0$ with $i=0$ or $j=0$, we have 
$[r',s']=0$ in $\mathfrak{b}^*\subseteq \mathfrak{gl}_{n+1}^*$, 
and $r_{n+1, n+1}$ is distinct from $r_{ll}$ for all $l<n+1$  by assumption.   
So 
\[
([r',s'])_{\iota\gamma} 
=
 \left\{ 
\begin{aligned} 
  0   \;\;\;\;\;\;\;\;\;\;\;\;\;\;\;\;\;\;\;\;\;\;\;\;\;\;\;\;     & \mbox{ if $\iota< \gamma$},  \\ 
(r_{\iota\iota}-r_{\gamma\gamma}) s_{\iota\gamma} +\sum_{k>\iota} r_{\iota k}s_{k\gamma }-\sum_{k<\gamma}s_{\iota k}r_{k\gamma}  \:\:\: &  \mbox{ if $\iota\geq \gamma$}.  
\end{aligned} 
\right. 
\] 
Solely for notational purposes, let $m =n+1$. 
We rewrite the Lie bracket $[r',s']$ as the sum 
\[ 
\begin{aligned}
&\left( 
\begin{array}{cc}
[r,s] & 0 \\ 
0 & 0 \\  
\end{array}  
\right) +            \\ 
& 
\left( 
\begin{array}{ccccc}
r_{1m}s_{m 1} &   & &  & \\ 
r_{2 m}s_{m 1} & r_{2 m}s_{m 2} & & &   \\ 
r_{3 m}s_{m 1} & r_{3 m}s_{m 2} & r_{3 m}s_{m 3} & &  \\ 
\vdots & \vdots  & \vdots&   &   \\
 r_{\iota m}s_{m 1} & r_{\iota m}s_{m 2} & r_{\iota m}s_{m 3} &  \ddots  & \\ 
 \vdots & \vdots & \vdots & &   \\  
(r_{mm}-r_{11})s_{m 1} & (r_{mm}-r_{22})s_{m 2} -s_{m 1}r_{12} & 
(r_{mm}-r_{33})s_{m 3} 
-\displaystyle{\sum_{\iota =1}^{2}} s_{m \iota}r_{\iota 3} 
& \ldots & -\displaystyle{\sum_{k< m}} s_{m k}r_{k m} \\ 
\end{array}
\right).     \\ 
\end{aligned}
\] 
Consider the entry in the furthest bottom left corner of the large matrix. 
 Since the eigenvalues of $r'$ are pairwise distinct, $s_{m 1}=0$. 
 Moving 
 over one column to the right and remaining in the last row, we see that 
 $s_{m 2}=0$ as well. 
Continue recursively until we get to the last column in the large matrix:  
each term in the sum in the bottom right entry equals zero since 
each $s_{m k}=0$ for all $k<m$. 
So the large matrix on the right hand side of the sum equals zero, 
and it follows that 
$[r,s]=0$. 
The induction hypothesis states that if $(r,s,i', j')$ is in $\mu^{-1}(0)^{rss}$ where $r\in \mathfrak{b}^{rss}\subseteq \mathfrak{gl}_n$, 
  $s\in \mathfrak{b}^*\subseteq \mathfrak{gl}_n^*$, and $i'$, ${j'}^t\in \mathbb{C}^n$, with one of $i'$ or $j'$ equaling zero (${j'}^t$ is the transpose of $j'$), 
  then $s$ is diagonal.  
  Since $i'=0$ or $j'=0$, $\mu(r,s,i', j')=0$ is equivalent to $[r,s]=0$. 
We can now invoke the strong induction hypothesis to conclude 
$s=\diag(s)$ and by the above argument, $s'=\diag(s')$, which completes the proof. 
\end{proof}

\begin{lemma}\label{lemma:r-s-0-0-s-is-diagonal}
Let $(r,s,0,0)$ be in $\mu^{-1}(0)^{rss}$. Then there exists $b\in B$ such that $(\Ad_b(r), \Ad_b^*(s),0,0)$
 $=$ $(\diag(r),\diag(s),0,0)$. 
\end{lemma}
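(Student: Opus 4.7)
The plan is to assemble this directly from the three results already established in the preliminaries, without any new calculation. The hypothesis $(r,s,0,0)\in\mu^{-1}(0)^{rss}$ unpacks to mean that $r$ has pairwise distinct eigenvalues and $\mu(r,s,0,0)=\overline{[r,s]+0\cdot 0}=\overline{[r,s]}=0$ in $\mathfrak{b}^*$.

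First I would invoke Proposition~\ref{proposition:s-in-HR-is-diagonalizable} with $i=0$ and $j=0$ to conclude that $s$ is already diagonal, i.e.\ $s=\diag(s)$. Next, since $r\in\mathfrak{b}^{rss}$, Proposition~\ref{proposition:distinct-ev-diagonalizable} provides an explicit Borel element $b\in B$ (the one given in \eqref{eqn:diagonalizingmatrix-B}) such that $\Ad_b(r)=\diag(r)$. It remains to check that this same $b$ does the right thing to $s$ in $\mathfrak{b}^*$: by Lemma~\ref{lemma:s-diagonal-conjugation-still-diagonal}, conjugation by any element of $B$ preserves the diagonal of a diagonal element of $\mathfrak{b}^*$, so $\diag(\Ad_b^*(s))=\diag(s)=s$, and since $s$ was already diagonal and the off-diagonal (strictly upper triangular) contributions are killed in $\mathfrak{b}^*\cong\mathfrak{g}/\mathfrak{u}$, we obtain $\Ad_b^*(s)=\diag(s)$ in $\mathfrak{b}^*$.

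Finally, since the zero vector and zero covector are fixed by the $B$-action $i\mapsto bi$, $j\mapsto jb^{-1}$, we conclude $(\Ad_b(r),\Ad_b^*(s),0,0)=(\diag(r),\diag(s),0,0)$, as required.

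I do not expect any genuine obstacle here, since the argument is a straightforward concatenation of Proposition~\ref{proposition:s-in-HR-is-diagonalizable}, Proposition~\ref{proposition:distinct-ev-diagonalizable}, and Lemma~\ref{lemma:s-diagonal-conjugation-still-diagonal}. The only subtle point to flag is the passage from $\mathfrak{b}^*$ back to an honest element of $\mathfrak{b}$ when writing $\Ad_b^*(s)=\diag(s)$: one should be careful to remember that equalities involving $s$ and $\Ad_b^*(s)$ are really equalities of cosets modulo $\mathfrak{u}$, but since both representatives are diagonal the identification is unambiguous.
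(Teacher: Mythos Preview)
Your proposal is correct and follows essentially the same approach as the paper's proof: invoke Proposition~\ref{proposition:s-in-HR-is-diagonalizable} to see that $s$ is already diagonal, apply Proposition~\ref{proposition:distinct-ev-diagonalizable} to diagonalize $r$ by some $b\in B$, and use Lemma~\ref{lemma:s-diagonal-conjugation-still-diagonal} (and the remark following it) to verify that $\Ad_b^*(s)=\diag(s)$ in $\mathfrak{b}^*$. The paper additionally cites Proposition~\ref{proposition:diagonal-being-invariant} to emphasize that the diagonal entries of $r$ are unchanged, but this is already implicit in the conclusion of Proposition~\ref{proposition:distinct-ev-diagonalizable}, so your argument is complete as written.
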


The Borel matrix in Lemma~\ref{lemma:r-s-0-0-s-is-diagonal} simultaneously diagonalizes $r$ and $s$. 
We add the subtlety in Lemma~\ref{lemma:r-s-0-0-s-is-diagonal} that the $B$-action on the points in $B.(r,s,0,0)$ does not change the diagonal coordinates of $r$ and $s$.

\begin{proof}
By Proposition~\ref{proposition:s-in-HR-is-diagonalizable}, $s$ is diagonal. By 
Proposition~\ref{proposition:distinct-ev-diagonalizable}, there exists a matrix $b$ in the Borel so that $brb^{-1}$ is diagonal. By the second statement after Lemma~\ref{lemma:s-diagonal-conjugation-still-diagonal}, 
we see that $\Ad_b^*(s)$ in $\mathfrak{b}^*$ is always  diagonal. By Proposition~\ref{proposition:diagonal-being-invariant} and Lemma~\ref{lemma:s-diagonal-conjugation-still-diagonal}, 
we see that the diagonal coordinates of $r$ and $s$ are not affected by the $B$-action.  
\end{proof}

\begin{proposition}\label{proposition:i-j-zero-closed-orbit}
Each $B$-orbit containing the quadruple $(r,s,0,0)$, where $r$ is regular semisimple and the commutator of $r$ and $s$ is zero, is closed.
\end{proposition}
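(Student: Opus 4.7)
The plan is to reduce to the diagonal representative provided by Lemma~\ref{lemma:r-s-0-0-s-is-diagonal}, identify the stabilizer of that point in $B$, and then invoke a standard fact about orbits of unipotent groups.

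First I would use Lemma~\ref{lemma:r-s-0-0-s-is-diagonal} to choose $b_0\in B$ with
\[
b_0.(r,s,0,0) \;=\; (\diag(r),\,\diag(s),\,0,\,0) \;=:\; x_0.
\]
Since the $B$-orbit (and its closure) are unchanged by replacing the basepoint with $x_0$, it suffices to show $B.x_0$ is closed in $\mu^{-1}(0)^{rss}$ (equivalently, in the ambient affine space $T^*(\mathfrak{b}\times\mathbb{C}^n)$). Next I would compute the $B$-stabilizer of $x_0$. Because $r\in\mathfrak{b}^{rss}$ has pairwise distinct eigenvalues, any $b\in B$ with $\Ad_b(\diag(r))=\diag(r)$ must commute with the regular diagonal matrix $\diag(r)$ and is therefore itself diagonal, so $b\in T$, the maximal torus of diagonal matrices in $B$. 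Conversely, every $t\in T$ fixes $\diag(r)$ by conjugation, fixes $\diag(s)\in\mathfrak{h}\subset\mathfrak{b}^*=\mathfrak{b}/\mathfrak{u}$ under the coadjoint action (cf.\ Lemma~\ref{lemma:s-diagonal-conjugation-still-diagonal}), and obviously fixes $i=0$ and $j=0$. Hence $\mathrm{Stab}_B(x_0)=T$.

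Now I would use the Levi decomposition $B=T\ltimes U$, where $U$ is the unipotent radical (strictly upper triangular unipotent matrices). Writing a general element of $B$ as $b=ut$ with $u\in U$, $t\in T$, the containment $T\subseteq \mathrm{Stab}_B(x_0)$ gives
\[
b.x_0 \;=\; u.(t.x_0) \;=\; u.x_0,
\]
so the $B$-orbit and $U$-orbit coincide: $B.x_0=U.x_0$. Finally, I would invoke the Kostant--Rosenlicht theorem, which asserts that every orbit of a unipotent algebraic group acting algebraically on an affine variety is closed. Since $U$ is unipotent and $T^*(\mathfrak{b}\times\mathbb{C}^n)$ is affine, $U.x_0$ is closed in the ambient affine space, hence closed in $\mu^{-1}(0)^{rss}$, and the conclusion follows.

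The main thing to verify carefully is the stabilizer computation, and in particular that the $T$-action on $\diag(s)$ is trivial in $\mathfrak{b}^*=\mathfrak{b}/\mathfrak{u}$ (the nontrivial off-diagonal contributions land in $\mathfrak{u}$ and so are killed by the projection $\overline{(\cdot)}$). Once the stabilizer is pinned down as $T$, reducing to a $U$-orbit and applying Kostant--Rosenlicht is essentially automatic; no delicate valuative or GIT argument is required, which is fortunate since $B$ is nonreductive.
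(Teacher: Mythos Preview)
Your proof is correct and takes a different, more rigorous route than the paper. The paper first observes via Proposition~\ref{proposition:s-in-HR-is-diagonalizable} that $s$ is already diagonal, then chooses a one-parameter subgroup $\lambda(t)\subset B$ with $\lim_{t\to 0}\lambda(t).(r,\diag(s),0,0)=(\diag(r),\diag(s),0,0)$ and remarks that this limit lies in the orbit by Lemma~\ref{lemma:r-s-0-0-s-is-diagonal}. As written, that only checks that one particular candidate boundary point already belongs to the orbit; it does not by itself establish that the orbit is closed, and for a nonreductive group there is no Hilbert--Mumford-type principle to fill the gap. Your approach sidesteps this entirely: once you pin down $\mathrm{Stab}_B(x_0)=T$ and use $B=UT$ to identify $B.x_0=U.x_0$, Kostant--Rosenlicht gives closedness outright. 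An alternative direct argument would be to note that the orbit coincides with the affine subspace $\{r'\in\mathfrak{b}:\diag(r')=\diag(r)\}\times\{\diag(s)\}\times\{0\}\times\{0\}$ (every such $r'$ is $B$-conjugate to $\diag(r)$ by Proposition~\ref{proposition:distinct-ev-diagonalizable}), but your unipotent-orbit argument is cleaner and does not require that explicit description.
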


\begin{proof}
By Proposition~\ref{proposition:s-in-HR-is-diagonalizable}, $s$ must be diagonal.  
Choose an appropriate 1-parameter subgroup $\lambda(t)$ so that 
\[ \lim_{t\rightarrow 0} \lambda(t).( r,\diag(s),0,0)=(\diag(r),\diag(s),0,0).  \]
Since $(\diag(r),\diag(s),0,0)$ is in the $B$-orbit by 
Lemma~\ref{lemma:r-s-0-0-s-is-diagonal}, we are done. 
\end{proof}

\begin{corollary}\label{corollary:Dreg-affine-quotient-closed-orbits}
The affine quotient $\{(r,s,0,0):r \mbox{ regular}, [r,s]=0 \}/\!\!/B$ consists of closed orbits. 
\end{corollary}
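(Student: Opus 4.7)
The plan is essentially to cite Proposition~\ref{proposition:i-j-zero-closed-orbit} and translate the orbit-theoretic statement into a statement about the affine quotient. Concretely, I would argue that every point in the locus $Z := \{(r,s,0,0) : r \text{ regular semisimple}, [r,s]=0\}$ lies in a closed $B$-orbit by Proposition~\ref{proposition:i-j-zero-closed-orbit}, so the set-theoretic fibers of the quotient map $Z \to Z/\!\!/B$ are $B$-orbits rather than unions of one open orbit together with boundary orbits; hence every orbit appearing in the quotient is closed.

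First I would spell out what $Z/\!\!/B$ means here: it is $\operatorname{Spec}(\mathbb{C}[Z]^B)$, and the natural quotient map $\pi: Z \twoheadrightarrow Z/\!\!/B$ has the property that for any $z \in Z$, the fiber $\pi^{-1}(\pi(z))$ contains the closure $\overline{B.z}$ as its unique closed $B$-orbit. (For a general non-reductive action one needs the invariant ring to separate closed orbits; in our situation each $B$-orbit is automatically closed by the previous proposition, so this subtlety is vacuous.) Next, I would recall from Lemma~\ref{lemma:r-s-0-0-s-is-diagonal} and Proposition~\ref{proposition:s-in-HR-is-diagonalizable} that any $(r,s,0,0) \in Z$ lies in the same $B$-orbit as a point of the form $(\diag(r),\diag(s),0,0)$, so every $B$-orbit in $Z$ contains a diagonal representative and these representatives are parametrized by the diagonal entries of $r$ and $s$.

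Then I would apply Proposition~\ref{proposition:i-j-zero-closed-orbit} to conclude that $\overline{B.(r,s,0,0)} = B.(r,s,0,0)$ for every $(r,s,0,0) \in Z$. Since the unique closed orbit in each fiber of $\pi$ is the orbit $B.z$ itself, the affine quotient $Z/\!\!/B$ consists precisely of these closed orbits, which is what the corollary asserts.

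The main (and essentially only) potential obstacle is the non-reductivity of $B$: without a reductive group, one cannot in general conclude that points of an affine GIT quotient correspond to closed orbits. Here, however, the issue does not arise because closedness of every orbit is already established by Proposition~\ref{proposition:i-j-zero-closed-orbit}, so the corollary follows immediately; no additional computation is needed.
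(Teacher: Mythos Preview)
Your proposal is correct and takes essentially the same approach as the paper: both deduce the corollary directly from Proposition~\ref{proposition:i-j-zero-closed-orbit}. The paper's proof is literally the single sentence ``This follows from Proposition~\ref{proposition:i-j-zero-closed-orbit},'' whereas you additionally unpack what the affine quotient means and address the non-reductivity of $B$; that extra discussion is sound but not required, since once every orbit is closed the conclusion is immediate.
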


\begin{proof}
This follows from Proposition~\ref{proposition:i-j-zero-closed-orbit}. 
\end{proof}

\begin{corollary}\label{corollary:closed-orbits-are-disjoint}
If the points 
$(\diag(r),\diag(s),0,0)$ and $(\diag(r'),\diag(s'),0,0)$ are distinct, then the $B$-orbits 
$B.(\diag(r),\diag(s),0,0)$ and $B.(\diag(r'),\diag(s'),0,0)$ are disjoint. 
\end{corollary}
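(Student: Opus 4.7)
The plan is to argue by contradiction: suppose there exists $b\in B$ with $b.(\diag(r),\diag(s),0,0)=(\diag(r'),\diag(s'),0,0)$, and deduce that the two points coincide. The $i$- and $j$-coordinates are trivially equal (both zero), so the content lies in comparing the matrix components.

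First I would handle the $\mathfrak{b}^*$-component. The hypothesis gives $\Ad_b^*(\diag(s))=\diag(s')$ in $\mathfrak{b}^*$. By Lemma~\ref{lemma:s-diagonal-conjugation-still-diagonal} (and the remark after it, which notes that the strictly upper triangular part is killed in $\mathfrak{b}^*=\mathfrak{b}/\mathfrak{u}$), one has $\Ad_b^*(\diag(s))=\diag(s)$ in $\mathfrak{b}^*$, forcing $\diag(s)=\diag(s')$.

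Next I would handle the $\mathfrak{b}$-component. The assumption gives $\Ad_b(\diag(r))=\diag(r')$ as elements of $\mathfrak{b}$. Apply $\diag(\,\cdot\,)$ to both sides: Proposition~\ref{proposition:diagonal-being-invariant} yields $\diag(\Ad_b(\diag(r)))=\diag(\diag(r))=\diag(r)$, while the right-hand side has diagonal $\diag(r')$. Hence $\diag(r)=\diag(r')$. Combining these equalities with the trivial agreement of the zero vector/covector components contradicts the assumption that $(\diag(r),\diag(s),0,0)$ and $(\diag(r'),\diag(s'),0,0)$ are distinct.

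There is essentially no obstacle here: the proof is a direct invocation of the two fixed-point statements from Section~\ref{subsection:Borel-fixed-points}. The only subtlety worth flagging is that the $r$-comparison only requires matching of diagonals (not that $b$ itself be diagonal); if one preferred the stronger statement that $b$ centralizes $\diag(r)$, one would additionally use that a regular semisimple diagonal matrix has only the maximal torus as its centralizer in $B$, but this is not needed for the disjointness conclusion.
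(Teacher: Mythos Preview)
Your proof is correct and follows essentially the same route as the paper's own argument: assume the orbits meet, pick $b\in B$ realizing the coincidence, and then invoke Proposition~\ref{proposition:diagonal-being-invariant} for the $r$-component and Lemma~\ref{lemma:s-diagonal-conjugation-still-diagonal} for the $s$-component to force the two diagonal quadruples to agree. The only cosmetic difference is that you treat the $s$-component first and the $r$-component second, whereas the paper does the reverse.
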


\begin{proof}
Suppose the $B$-orbits are not disjoint. 
We will show that the quadruples 
$(\diag(r),\diag(s),0,0)$ are $(\diag(r'),\diag(s'),0,0)$ are the same point in $T^*(\mathfrak{b}\times\mathbb{C}^n)$. 
By assumption, 
the intersection of the $B$-orbits is nonempty, so choose $b\in B$ so that $b.(\diag(r),\diag(s),0,0)=(\diag(r'),\diag(s'),0,0)$. 
This implies  
$(b\diag(r)b^{-1}, b\diag(s)b^{-1},0,0)$ $=$ $(\diag(r')$, $\diag(s')$, $0$, $0)$.  
Setting 
the first coordinates equal, 
$\Ad_b(\diag(r))=\diag(r')$. 
So $b\diag(r)b^{-1}$ is diagonal. By Proposition~\ref{proposition:diagonal-being-invariant}, 
$\diag(r)$ is fixed under the $B$-action: 
$\Ad_b(\diag(r)) = \diag(r)=\diag(r')$. 
Next, 
we see that 
$\Ad_b^*(\diag(s))=\diag(s)=\diag(s')$  
by Lemma~\ref{lemma:s-diagonal-conjugation-still-diagonal}. 
\end{proof} 

\begin{remark}\label{remark:exhausting-all-closed-orbits}
Proposition~\ref{proposition:openlocus-affine-quotient} together with Proposition~\ref{proposition:i-j-zero-closed-orbit}
show that all closed $B$-orbits in $\mu^{-1}(0)^{rss}$ 
must be of the form $(r,s,0,0)$.  
\end{remark}

\begin{proposition}\label{proposition:openlocus-affine-quotient}
Each orbit closure in $\mu^{-1}(0)^{rss}/\!\!/B$ contains a point of the form $(\diag(r),\diag(s'),0,0)$.  
\end{proposition}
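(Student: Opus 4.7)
The plan is to reduce to the case where $r$ is already diagonal and then use a one-parameter subgroup of the maximal torus $T\subset B$ to simultaneously degenerate $i$, $j$, and the off-diagonal entries of $s$. First I apply Proposition~\ref{proposition:distinct-ev-diagonalizable} to select $b \in B$ with $(\Ad_b(r),\Ad_b^*(s),bi,jb^{-1})=(\diag(r), s', i'', j'')$, where the diagonal entries of $s'$ are the rational functions from \eqref{eqn:s-single-prime-first-prop}. Because the $B$-orbit of $(r,s,i,j)$ is unchanged by this move, it suffices to show that $(\diag(r),\diag(s'),0,0)$ lies in $\overline{B.(\diag(r), s', i'', j'')}$.

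Next I would exploit the moment-map relation $\overline{[r,s']+i''j''}=0$ under the assumption that $r=\diag(r)$. For the lower-triangular representative of $s'\in\mathfrak{b}^*$, the $(k,l)$ entry of $[r,s']$ equals $(r_{kk}-r_{ll})s'_{kl}$, so reading off the vanishing of the diagonal and strictly lower entries of $[r,s']+i''j''$ yields $i''_k j''_k = 0$ for every $k$ and $s'_{kl}=-i''_k j''_l/(r_{kk}-r_{ll})$ for each $k>l$; the division is valid because $r\in\mathfrak{b}^{rss}$. In particular, if $s'_{kl}\neq 0$ with $k>l$, then both $i''_k\neq 0$ and $j''_l\neq 0$.

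Then I would pick integers $a_1,\ldots,a_n$ with $a_k>0$ whenever $i''_k\neq 0$ and $a_k<0$ whenever $j''_k\neq 0$; this is feasible because for each $k$ at most one of $i''_k$, $j''_k$ is nonzero. With the cocharacter $\lambda(t)=\diag(t^{a_1},\ldots,t^{a_n})\in B$ for $t\in\mathbb{C}^*$, the diagonal matrix $r$ is preserved, while $s'_{kl}\mapsto t^{a_k-a_l}s'_{kl}$ in $\mathfrak{b}^*$, $i''_k\mapsto t^{a_k}i''_k$, and $j''_k\mapsto t^{-a_k}j''_k$. By the choice of weights, $\lambda(t).i''$ and $\lambda(t).j''$ tend to $0$ as $t\to 0$; whenever $s'_{kl}\neq 0$ with $k>l$, the previous paragraph gives $a_k>0>a_l$, so $t^{a_k-a_l}\to 0$ and the strictly lower entries of $s'$ vanish in the limit, while the diagonal entries $s'_{kk}$ are fixed because $t^{a_k-a_k}=1$. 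Passing to the limit therefore exhibits $(\diag(r),\diag(s'),0,0)$ in $\overline{B.(r,s,i,j)}$, and this limit point still lies in $\mu^{-1}(0)^{rss}$ since $r$ has distinct eigenvalues and $[\diag(r),\diag(s')]=0$.

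The main obstacle is arranging the cocharacter weights to satisfy all three degeneration requirements at once, and this is exactly what the moment-map constraint makes possible: it forbids $i''_k$ and $j''_k$ from both being nonzero (so the sign conditions on the $a_k$ are compatible), and it forces the support of the strictly lower entries of $s'$ to be contained in the product of the supports of $i''$ and $j''$ (so those entries decay automatically once the sign conditions hold). Without the constraint $\mu=0$ there would be no reason for these three sets of conditions to be jointly solvable.
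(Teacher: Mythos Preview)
Your proof is correct and follows essentially the same route as the paper's: diagonalize $r$ via Proposition~\ref{proposition:distinct-ev-diagonalizable}, read off from the moment-map equation that $i''_k j''_k=0$ and $s'_{kl}=-i''_k j''_l/(r_{kk}-r_{ll})$ for $k>l$, then choose a toric cocharacter whose weights are positive on the support of $i''$ and negative on the support of $j''$ so that $i''$, $j''$, and the strictly lower part of $s'$ all degenerate to zero. The paper makes the identical choices (with the explicit values $a_\iota\in\{1,-1,0\}$) and the identical limit computation; your closing paragraph articulating \emph{why} the moment-map constraint makes the three degeneration requirements compatible is a nice addition but not a departure from the argument.
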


\begin{proof}
For $(r,s,i,j)$ be in $\mu^{-1}(0)^{rss}$,  
we will show that there exists a quadruple of the form 
$(\diag(r)$, $\diag(s')$, $0$, $0)$ in its $B$-orbit closure, where $s'$ is some other element in $\mathfrak{b}^*$. 

Firstly, consider points of the form $(r,s,0,0)$. By Lemma~\ref{lemma:r-s-0-0-s-is-diagonal}, we are done.  
Thus consider points of the form $(r,s,i,j)$ in $\mu^{-1}(0)^{rss}$
with $i$ or $j$ not necessarily 0. 
By 
Proposition~\ref{proposition:distinct-ev-diagonalizable}, there is $b\in B$ so that 
$(\Ad_b(r),\Ad_b^*(s), bi, jb^{-1})=(\diag(r),s',i',j')$. 
Let us write $r= (r_{\iota\gamma})$, $s'=(s_{\iota\gamma}')$, $i'=(x_{\iota}')$, and $j'=(y_{\iota}')$.  
Since $\mu$ is $B$-equivariant, $[r,s]+ij=0$ implies $[\diag(r),s']+i'j'=0$ in $\mathfrak{b}^*$. 
Since 
\[
([\diag(r),s'] + i'j')_{\iota\gamma} = \left\{ 
\begin{aligned} 
0       \;\;\;\;\;\;\;\;\;\;\;\;\;\;\;\; & \mbox{ if $\iota< \gamma$}, \\ 
x_{\iota}'y_{\iota}' \;\;\;\;\;\;\;\;\;\;\;\; \;\; &  \mbox{ if $\iota=\gamma$}, \\ 
(r_{\iota\iota}-r_{\gamma\gamma})s_{\iota\gamma}' + x_{\iota}'y_{\gamma}' \;\; & \mbox{ if $\iota>\gamma$},   
\end{aligned} 
\right. 
\] 
  $x_{\iota}'=0$ or $y_{\iota}'=0$ for each $\iota$. 
At this point, we give a recipe for choosing the $a_{\iota}$'s in the 1-parameter subgroup $\lambda(t)=\diag(t^{a_1},\ldots,t^{a_n})$ where $a_{\iota}\in\mathbb{Z}$. Choose 
\[
a_{\iota}= \left\{ 
\begin{aligned} 
1   \;\;\;    & \mbox{ if $x_{\iota}' \not=0$}, \\ 
-1  \;\;\; &  \mbox{ if $y_{\iota}'   \not=0$}, \\ 
0   \;\;\; & \mbox{ if $x_{\iota}'=y_{\iota}'=0$}.  
\end{aligned} 
\right. 
\] 
Now consider $\lambda(t).(\diag(r),s',i',j')$, which equals 
\[\left( 
\diag(r), \left( 
\begin{array}{cccc}
s_{11} &                    &         &  \\ 
       & \ddots             &         &  \\  
       &  t^{a_{\iota}-a_{\gamma}}s_{\iota\gamma}'& \ddots  & \\ 
       &                    &         & s_{nn}\\
\end{array}
\right), 
\left( \begin{array}{c}
t^{a_1}x_1' \\ 
\vdots \\  
t^{a_n}x_n' \\  
\end{array}  
\right),  
\left(    
\begin{array}{ccc}
t^{-a_1} y_1' & \ldots & t^{-a_n} y_n' \\ 
\end{array}
\right)  
\right). 
 \]
If $x_{\iota}'\not=0$, then we have $t x_{\iota}'$. If $y_{\gamma}'\not=0$, then we also have $ty_{\gamma}'$. 
If $s_{\iota\gamma}'\not=0$, then since $s_{\iota\gamma}'=-x_{\iota}'y_{\gamma}'/(r_{\iota\iota}-r_{\gamma\gamma})$, we have $t^2 s_{\iota\gamma}'$. 
We conclude 
\[ \lim_{t\rightarrow 0} \lambda(t).(\diag(r),s',i',j')=(\diag(r),\diag(s'),0,0).
\] 
\end{proof}

The proof for Theorem~\ref{theorem:rss-extending-to-all-of-the-reg-semi-stable-locus} 
includes showing that 
the closure of each $B$-orbit on $\mu^{-1}(0)^{rss}/\!\!/B$ contains at most one point of the form $(\diag(r),\diag(s'),0,0)$.

\begin{remark}\label{remark:orbitclosures-emptyintersection-implies-orbitsempty-intersection}
It is clear that if $\overline{B.P_1}\cap\overline{B.P_2}=\varnothing$ where $P_1, P_2\in T^*(\mathfrak{b}\times \mathbb{C}^n)$, then $B.P_1\cap B.P_2=\varnothing$. 
\end{remark}

\section{Proof of Theorem  \texorpdfstring{$\ref{theorem:rss-extending-to-all-of-the-reg-semi-stable-locus}$}{rss categorical quotient}}\label{section:proof-of-reg-semistable-locus}

We now prove  Theorem~\ref{theorem:rss-extending-to-all-of-the-reg-semi-stable-locus}.

\begin{proof} 
It is clear that $P$ is regular and by Proposition~\ref{proposition:diagonal-being-invariant} and \ref{proposition:deriving-diag-coordinates-of-bsb-inverse}, 
 $P$ is $B$-invariant. 
By 
  Remark~\ref{remark:exhausting-all-closed-orbits}, closed orbits are precisely those that contain a point of the form $(r,s,0,0)$ since orbits that contain a point of the form $(r,s,i,j)$, where $i$ or $j$ is nonzero, are not closed. 
Since 
 each 
   closed orbit contains a unique point of the form $(\diag(r),\diag(s),0,0)$ by Lemma~\ref{lemma:r-s-0-0-s-is-diagonal} and by Corollary~\ref{corollary:closed-orbits-are-disjoint}, it is clear that two such distinct quadruples are mapped to distinct points in $\mathbb{C}^{2n}\setminus \Delta_n$ via the map $P$. In particular, we see that two  $B$-orbits $B.(\diag(r),\diag(s),0,0)$ and $B.(\diag(r'),\diag(s'),0,0)$ cannot be in the same $B$-orbit closure  
 for 
 if the intersection of the two orbits $B.(\diag(r),\diag(s),0,0)$ and $B.(\diag(r'),\diag(s'),0,0)$ were nonempty, then two such closed orbits would not be separated by $P$. 
Thus each $B$-orbit closure contains a unique closed orbit, and 
exactly one point of the form $(\diag(r),\diag(s),0,0)$. 
Thus $P$ is injective on orbit closures.

We will now show that $P$ is surjective. Consider $(x_{11},\ldots,x_{nn},y_{11},\ldots,y_{nn})$ in $\mathbb{C}^{2n}\setminus \Delta_n$. 
A point $(r,s,i,j)$ in $\mu^{-1}(0)^{rss}$ is constrained by $[r,s]+ij=0$. 
Since 
\[ 
([r,s]+ij)_{\iota\gamma} =
\left\{ 
\begin{aligned}
0 \;\;\;\;\;\;\;\; \;\;\;\;\;\; \;\;\;\; \;\;\;\;\;\;\;\;    &\mbox{ if }\iota < \gamma,  \\  
\sum_{\iota <k\leq n} r_{\iota k}s_{k \iota} 
- \sum_{1\leq k< \iota } s_{\iota k}r_{k \iota}+ x_{\iota}y_{\iota} \;\;\;\; &\mbox{ if } \iota = \gamma,  \\ 
\sum_{\iota \leq k\leq n} r_{\iota k}s_{k\gamma} 
- \sum_{1\leq k\leq \gamma}s_{\iota k}r_{k\gamma}+ x_{\iota} y_{\gamma}\;\;    &\mbox{ if }\iota >\gamma, \\ 
\end{aligned}
\right. 
\] 
none 
 of 
 the 
 coordinate functions of $[r,s]+ij$ involve $s_{\iota\iota}$'s. So $s_{\iota\iota}$ is a free parameter. 
Take $s_{\iota\iota}$ to equal 
\begin{equation}\label{eqn:setting-s-iota-iota-equal-to-an-expression}
 s_{\iota\iota} 
=  y_{\iota\iota}
- \tr(L^{\iota}(s-s_{\iota\iota}\I ))
 \end{equation}
and 
 take $r_{\iota\iota}$ to equal $x_{\iota\iota}$ (note that there are no $s_{\iota\iota}$ in the expansion of the expression on the right hand side of \eqref{eqn:setting-s-iota-iota-equal-to-an-expression}). 
Then a quadruple $(r,s,i,j)$ satisfying such conditions, whose $B$-orbit closure contains the unique point $(\diag(x_{\iota\iota}),\diag(y_{\iota\iota}),0,0)$, will map to $(x_{11},\ldots,x_{nn},y_{11},\ldots,y_{nn})$. 
\end{proof}

\section{The coordinate ring of the \texorpdfstring{$rss$}{rss}-locus}\label{section:coordinate-ring-rss-locus}

We will prove that 
the coordinate ring of the affine quotient $\mu^{-1}(0)^{rss}/\!\!/B$ is isomorphic to the coordinate ring of pairwise diagonal matrices in $\mu^{-1}(0)^{rss}$.

\subsection{Changing coordinates}\label{subsection:changing-coordinates}

\begin{definition}\label{definition:subdiagonal-level-k}
Let $(a_{\iota\gamma})$ be a matrix. {\em Level $k$ subdiagonal entries} consist of those coordinates $a_{\iota\gamma}$ below the main diagonal that satisfy $\iota-\gamma=k$. 
\end{definition}

\begin{example}\label{example:subdiagonal-level-k-nxn-matrix}
For an $n\times n$ matrix, level $0$ subdiagonal entries are precisely those along the main diagonal. Level $1$ subdiagonal entries are those immediately below the main diagonal. Level $n-1$ subdiagonal entry is the $(n,1)$-entry, in the lower left corner. 
\end{example}
 
In the next Proposition, we prove that for $\iota > \gamma$,  
the equation $(\mu(r,s,i,j))_{\iota\gamma}=0$
 may be solved for the coordinate function $s_{\iota\gamma}$, which depends on those $s_{ij}$ satisfying $i-j>\iota-\gamma$. That is,  each $s_{\iota\gamma}$ is a regular function of $s_{ij}$ in level $k$ subdiagonal, where $k=i-j>\iota-\gamma$. 

\begin{proposition}\label{proposition:off-diagonal-coordinate-function-defining-variety}
For each $\iota>\gamma$, the coordinate equation $([r,s]+ij)_{\iota\gamma}=0$ 
may be solved for $s_{\iota\gamma}$, which is in 
\[\im \left( \mathbb{C}[ 
\{r_{\iota j}\}_{\iota<j}, \{r_{i\gamma} \}_{i<\gamma},\{s_{ij}\}_{i-j>\iota-\gamma},
x_{\iota},y_{\gamma}][(r_{\iota\iota}-r_{\gamma\gamma})^{-1}]
\longrightarrow\mathbb{C}[\mu^{-1}(0)^{rss}]\right). 
\] 
\end{proposition}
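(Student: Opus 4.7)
The plan is to write $([r,s]+ij)_{\iota\gamma}=0$ explicitly, isolate the term linear in $s_{\iota\gamma}$, and invert its coefficient. Since $r\in\mathfrak{b}$ is upper triangular and $s\in\mathfrak{b}^*\cong\mathfrak{g}/\mathfrak{u}$ is represented by a lower triangular matrix, the entries $r_{ab}$ vanish for $a>b$ and $s_{ab}$ vanish for $a<b$. This two-sided triangularity is the structural input that will force every $s$-variable appearing in the relation (other than $s_{\iota\gamma}$ itself) to sit strictly deeper below the diagonal, i.e., at subdiagonal level greater than $\iota-\gamma$.

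Expanding the commutator,
\[
([r,s])_{\iota\gamma}=\sum_{k}r_{\iota k}s_{k\gamma}-\sum_{k}s_{\iota k}r_{k\gamma},
\]
the first sum collapses to indices $k\geq \iota$ and the second to indices $k\leq \gamma$. I would then split off the boundary term $k=\iota$ from the first sum and $k=\gamma$ from the second; these two contributions combine to give exactly $(r_{\iota\iota}-r_{\gamma\gamma})\,s_{\iota\gamma}$. The remaining summands in the first sum range over $k>\iota$, producing entries $s_{k\gamma}$ with $k-\gamma>\iota-\gamma$ multiplied against $r_{\iota k}$ with $k>\iota$; the remaining summands in the second sum range over $k<\gamma$, producing entries $s_{\iota k}$ with $\iota-k>\iota-\gamma$ multiplied against $r_{k\gamma}$ with $k<\gamma$. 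Adjoining $(ij)_{\iota\gamma}=x_\iota y_\gamma$ assembles the full equation $([r,s]+ij)_{\iota\gamma}=0$.

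To finish, I would invoke the $rss$ hypothesis: the eigenvalues of $r$ are pairwise distinct, so $r_{\iota\iota}-r_{\gamma\gamma}$ is invertible in $\mathbb{C}[\mu^{-1}(0)^{rss}]$. Dividing the isolated equation by this factor expresses
\[
s_{\iota\gamma}=\frac{1}{r_{\iota\iota}-r_{\gamma\gamma}}\Bigl(\sum_{k<\gamma}s_{\iota k}r_{k\gamma}-\sum_{k>\iota}r_{\iota k}s_{k\gamma}-x_\iota y_\gamma\Bigr),
\]
which manifestly lies in the image of the localized polynomial ring generated by $\{r_{\iota j}\}_{j>\iota}$, $\{r_{i\gamma}\}_{i<\gamma}$, $\{s_{ij}\}_{i-j>\iota-\gamma}$, $x_\iota$, $y_\gamma$ and $(r_{\iota\iota}-r_{\gamma\gamma})^{-1}$, as asserted.

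There is no genuine obstacle here: once the triangularity of $(r,s)$ is exploited, the argument reduces to bookkeeping of indices. The only point worth emphasizing is the choice of lower-triangular representative for $s\in\mathfrak{g}/\mathfrak{u}$, which is what guarantees the correct band structure of the two sums and validates the claim that only $s$-variables at strictly higher subdiagonal level (and the prescribed $r$-entries together with $x_\iota,y_\gamma$) appear on the right-hand side.
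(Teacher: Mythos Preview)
Your argument is correct and essentially identical to the paper's: both expand $([r,s]+ij)_{\iota\gamma}$, use the upper/lower triangularity of $r$ and $s$ to restrict the summation ranges, peel off the $k=\iota$ and $k=\gamma$ terms to isolate the coefficient $(r_{\iota\iota}-r_{\gamma\gamma})$ in front of $s_{\iota\gamma}$, and then invert this unit on the $rss$ locus. Your displayed formula for $s_{\iota\gamma}$ agrees with the paper's (up to a harmless sign convention in the denominator), and your explicit observation that the remaining $s$-entries satisfy $i-j>\iota-\gamma$ is exactly what the statement requires.
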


\begin{proof}
For $\iota>\gamma$, the sequence of equalities 
\[  
0 =([r,s]+ij)_{\iota\gamma}= \sum_{\iota\leq j}r_{\iota j}s_{j \gamma}-\sum_{i\leq \gamma}s_{\iota i}r_{i\gamma}+x_{\iota}y_{\gamma} \\ 
=(r_{\iota\iota}-r_{\gamma\gamma} )s_{\iota\gamma} +\sum_{\iota <j}r_{\iota j}s_{j\gamma}-\sum_{i<\gamma}s_{\iota i}r_{i\gamma} + x_{\iota}y_{\gamma} 
\] 
 implies 
\[ 
   s_{\iota\gamma} =\dfrac{1}{r_{\gamma\gamma}-r_{\iota\iota}}\left( \sum_{\iota <j}r_{\iota j}s_{j\gamma}-\sum_{i<\gamma}s_{\iota i}r_{i\gamma} + x_{\iota}y_{\gamma}\right).   
\] 
\end{proof}

\begin{coordinatering}\label{coordinatering:coordinate-ring-off-diagonal}
We apply Proposition~\ref{proposition:off-diagonal-coordinate-function-defining-variety}  starting 
  from level $n-1$ subdiagonal of 
  $[r,s]+ij=0$, 
    add $(r_{nn}-r_{11})^{-1}$ to, and thus will be able to remove the parameter $s_{n1}$ from, the coordinate ring $\mathbb{C}[\mu^{-1}(0)^{rss}]$. 
     We then move to level $n-2$ subdiagonal and repeat the procedure by adding 
$(r_{n-1,n-1}-r_{11})^{-1}$ to the ring and then removing $s_{n-1,1}$, and then 
adding  
 $(r_{nn}-r_{22})^{-1}$ to the coordinate ring and then removing 
 $s_{n2}$. 
  Continue by moving up to the next subdiagonal.  
 \end{coordinatering}

\begin{corollary}\label{corollary:off-diag-coord-fn-defining-variety-iterated-steps}
Applying Systematic Procedure~\ref{coordinatering:coordinate-ring-off-diagonal}, 
it follows from Proposition~\ref{proposition:off-diagonal-coordinate-function-defining-variety} that $s_{\iota\gamma}$ is in 
\[ \begin{aligned} 
\im (\: \mathbb{C}[\{r_{ij}\}_{j>i\geq \iota\mbox{ or }i<j\leq \gamma}, 
&\{ x_k\}_{k\geq \iota},\{ y_l\}_{l\leq \gamma}][\{(r_{ii}-r_{jj})^{-1} \}_{i-j\geq \iota-\gamma} ]  
 \rightarrow \mathbb{C}[\mu^{-1}(0)^{rss}] \: ). \\
\end{aligned}
\] 
\end{corollary}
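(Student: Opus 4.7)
The plan is to prove Corollary~\ref{corollary:off-diag-coord-fn-defining-variety-iterated-steps} by strong downward induction on the subdiagonal level $k := \iota - \gamma$, running from $k = n-1$ down to $k = 1$, carrying out the eliminations in precisely the order prescribed by Systematic Procedure~\ref{coordinatering:coordinate-ring-off-diagonal}. At each stage I substitute the inductive expression for every $s_{ij}$ with $i - j > \iota - \gamma$ into the formula supplied by Proposition~\ref{proposition:off-diagonal-coordinate-function-defining-variety}, and then check that all generators produced sit inside the target ring.

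For the base case $k = n-1$, the only pair is $(\iota,\gamma) = (n,1)$. In this case the index sets $\{r_{nj}\}_{n<j}$, $\{r_{i1}\}_{i<1}$, and $\{s_{ij}\}_{i-j > n-1}$ from the proposition are all empty, so Proposition~\ref{proposition:off-diagonal-coordinate-function-defining-variety} already places $s_{n1}$ in $\mathbb{C}[x_n, y_1][(r_{nn}-r_{11})^{-1}]$, which matches the target ring for $(\iota,\gamma) = (n,1)$.

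For the inductive step, fix $(\iota,\gamma)$ with $1 \leq \iota - \gamma < n-1$ and assume the claim holds for every pair $(i,j)$ with $i - j > \iota - \gamma$. Proposition~\ref{proposition:off-diagonal-coordinate-function-defining-variety} expresses $s_{\iota\gamma}$ as a rational function of the explicit generators $\{r_{\iota j}\}_{\iota<j}$, $\{r_{i\gamma}\}_{i<\gamma}$, $x_\iota$, $y_\gamma$, $(r_{\iota\iota}-r_{\gamma\gamma})^{-1}$, together with the elements $s_{j\gamma}$ for $j>\iota$ and $s_{\iota i}$ for $i<\gamma$. I substitute the inductive expressions for each such $s_{j\gamma}$ and $s_{\iota i}$.

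The remaining work, which is the main but entirely routine obstacle, is index bookkeeping. For a substituted $s_{j\gamma}$ with $j > \iota$, the inductive hypothesis writes it using $r_{kl}$ with $l > k \geq j$ or $k < l \leq \gamma$, $x_k$ with $k \geq j$, $y_l$ with $l \leq \gamma$, and $(r_{kk}-r_{ll})^{-1}$ with $k-l \geq j - \gamma$; since $j > \iota$, each of these conditions implies the target condition (for instance $k \geq j > \iota$ gives $k \geq \iota$, and $k-l \geq j-\gamma > \iota-\gamma$). For a substituted $s_{\iota i}$ with $i < \gamma$, the analogous check uses $l \leq i < \gamma$ and $k-l \geq \iota - i > \iota - \gamma$. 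Finally, the explicit generators $r_{\iota j}$ ($\iota < j$), $r_{i\gamma}$ ($i < \gamma$), $x_\iota$, $y_\gamma$, and $(r_{\iota\iota}-r_{\gamma\gamma})^{-1}$ are visibly in the target ring for the pair $(\iota,\gamma)$. Combining these inclusions closes the induction and yields the stated containment.
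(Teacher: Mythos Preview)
Your proof is correct and follows essentially the same approach as the paper's own proof, which consists of the single sentence ``Exhaust Systematic Procedure~\ref{coordinatering:coordinate-ring-off-diagonal} recursively by decreasing to the next sublevel (and thus moving closer to the main diagonal).'' You have simply made explicit the downward induction on the subdiagonal level and carried out the index bookkeeping that the paper leaves to the reader.
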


Corollary~\ref{corollary:off-diag-coord-fn-defining-variety-iterated-steps} shows each $s_{\iota\gamma}$ (where $\iota>\gamma$) may be solved for so that it does not depend on any of the entries of $s\in\mathfrak{b}^*$. 

\begin{proof}
 Exhaust Systematic Procedure~\ref{coordinatering:coordinate-ring-off-diagonal} 
 recursively by decreasing to the next sublevel (and thus moving closer to the main diagonal). 
\end{proof}

\begin{corollary}\label{corollary:diagonalfunction-matrix-variety}
After replacing 
 each parameter $s_{\mu\nu}$ in the coordinate function $([r,s]+ij)_{\iota\iota}$ by recursively applying Systematic Procedure~\ref{coordinatering:coordinate-ring-off-diagonal}, 
 we obtain that $([r,s]+ij)_{\iota\iota}$ is in the image 
\[ 
   \im(\mathbb{C}[r_{ij},\{ x_k\}_{k\geq \iota} , \{ y_l\}_{l\leq \iota} ] [(r_{ii}-r_{jj})^{-1}]\rightarrow \mathbb{C}[T^*(\mathfrak{b}\times \mathbb{C}^n)^{rss}]). 
\] 
\end{corollary}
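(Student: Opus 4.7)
The plan is to start from the explicit form of the diagonal coordinate equation
\[ ([r,s]+ij)_{\iota\iota} = \sum_{\iota < k \leq n} r_{\iota k} s_{k\iota} \;-\; \sum_{1\leq k < \iota} s_{\iota k} r_{k\iota} \;+\; x_{\iota} y_{\iota}, \]
and then to substitute each off-diagonal variable $s_{\mu\nu}$ appearing on the right-hand side (here $(\mu,\nu)$ is either $(k,\iota)$ with $k>\iota$, or $(\iota,k)$ with $k<\iota$) by the rational expression produced by Corollary~\ref{corollary:off-diag-coord-fn-defining-variety-iterated-steps}. Since that corollary is the output of Systematic Procedure~\ref{coordinatering:coordinate-ring-off-diagonal} applied exhaustively across all subdiagonals, the substituted expression already lies in a ring that contains no $s$-variables at all.

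The only remaining work is then bookkeeping on the ranges of indices. For a term $s_{k\iota}$ with $k > \iota$, Corollary~\ref{corollary:off-diag-coord-fn-defining-variety-iterated-steps} writes it in terms of $\{r_{ij}\}$, vectors $\{x_m\}_{m \geq k}$, covectors $\{y_l\}_{l \leq \iota}$, and inverses of differences $r_{ii}-r_{jj}$; since $k \geq \iota+1 > \iota$, all $x_m$ that arise satisfy $m \geq \iota$. Symmetrically, for $s_{\iota k}$ with $k < \iota$ one obtains $\{x_m\}_{m \geq \iota}$ and $\{y_l\}_{l \leq k \leq \iota}$. The explicit multiplication by the outside $r_{\iota k}$ or $r_{k\iota}$ and the stand-alone $x_\iota y_\iota$ only add vectors $x_\iota$ and covectors $y_\iota$, both of which satisfy the desired index bounds. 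Combining, every variable that appears in the substituted expression lies in $\mathbb{C}[r_{ij}, \{x_k\}_{k \geq \iota}, \{y_l\}_{l \leq \iota}][(r_{ii}-r_{jj})^{-1}]$.

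The main obstacle I anticipate is not a conceptual one but a careful verification that the index ranges really close up correctly under substitution: Corollary~\ref{corollary:off-diag-coord-fn-defining-variety-iterated-steps} produces new $s$-free expressions whose $x,y$-index sets depend on which subdiagonal position is being solved, and one must check that no $x_m$ with $m < \iota$ nor $y_l$ with $l > \iota$ can sneak in through the recursive substitution. This is ensured by the ordering of Systematic Procedure~\ref{coordinatering:coordinate-ring-off-diagonal}: when we resolve $s_{\mu\nu}$, the $x$-indices that enter are all $\geq \mu$ and the $y$-indices are all $\leq \nu$, and this monotonicity is preserved under iteration. Once this index-tracking is made precise, the corollary follows immediately, with no new inverses of eigenvalue differences introduced beyond those already adjoined in the procedure.
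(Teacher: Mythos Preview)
Your proposal is correct and follows essentially the same approach as the paper: both invoke Corollary~\ref{corollary:off-diag-coord-fn-defining-variety-iterated-steps} to replace each off-diagonal $s_{\mu\nu}$ appearing in the diagonal equation by an $s$-free expression, and then observe that the resulting collection of $x$- and $y$-variables respects the index bounds $k\geq\iota$ and $l\leq\iota$. Your write-up is in fact more explicit than the paper's one-line proof, since you actually carry out the index bookkeeping that the paper leaves implicit.
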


\begin{proof}
This follows from Corollary~\ref{corollary:off-diag-coord-fn-defining-variety-iterated-steps} since for each $\iota > \gamma$,  $s_{\iota\gamma}$ may be solved 
so that it is independent of 
the coordinates of $s$, 
and also since 
$s_{\iota\iota}$ are not constrained under the moment map. 
\end{proof}

\begin{corollary}\label{corollary:diag-fn-defining-matrixvariety-explicit-eqn}
Writing 
  $F_{\iota}:=(\mu(r,s,i,j))_{\iota\iota}$, 
the image under the map given in Corollary~\ref{corollary:diagonalfunction-matrix-variety}
is  
\begin{align*}
F_{\iota} = \tr( jL^{\iota}i). 
 \end{align*}
\end{corollary}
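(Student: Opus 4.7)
The plan is to prove the identity $F_\iota=\tr(jL^\iota i)$ by reducing to the case in which $r$ is diagonal, where the left-hand side collapses to $x_\iota y_\iota$ and the right-hand side collapses via $L^\iota=E_{\iota\iota}$ to the same expression. Passing to the off-diagonal substitutions of Systematic Procedure~\ref{coordinatering:coordinate-ring-off-diagonal} is tantamount to restricting to the subvariety $\Omega\subset T^*(\mathfrak{b}\times\mathbb{C}^n)^{rss}$ cut out by the strictly lower-diagonal moment-map equations $([r,s]+ij)_{\mu\nu}=0$ for $\mu>\nu$; it therefore suffices to verify that $F_\iota$ and $\tr(jL^\iota i)$ agree pointwise on $\Omega$.

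To carry this out, I would first fix a point $(r,s,i,j)\in\Omega$ and apply the Borel element $b$ of Proposition~\ref{proposition:distinct-ev-diagonalizable} to diagonalize $r$, producing $(r',s',i',j'):=b.(r,s,i,j)$ with $r'=\diag(r)$, $i'=bi$, $j'=jb^{-1}$. The $B$-equivariance of $\mu$ combined with the observation that $\Ad_b^*$ fixes the image in $\mathfrak{b}^*\cong\mathfrak{g}/\mathfrak{u}$ of every diagonal matrix (since $bDb^{-1}-D\in\mathfrak{u}$ for any diagonal $D$) then ensures both that $(r',s',i',j')$ remains in $\Omega$ and that the diagonal value $F_\iota(r',s',i',j')$ equals $F_\iota(r,s,i,j)$. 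Because $r'$ is diagonal, $[r',s']$ has vanishing diagonal, so $F_\iota(r',s',i',j')=(i'j')_{\iota\iota}=(bi)_\iota\,(jb^{-1})_\iota$. The explicit formulas \eqref{eqn:diagonalizingmatrix-B} and \eqref{eqn:diagonalizingmatrix-inverse-of-B} give the $\iota$-th row of $b$ as $b_{\iota\iota}$ times the $\iota$-th row of $L^\iota$, and the $\iota$-th column of $b^{-1}$ as $b_{\iota\iota}^{-1}$ times the $\iota$-th column of $L^\iota$; these scalars cancel, leaving
\[
(bi)_\iota\,(jb^{-1})_\iota \;=\; (jL^\iota)_\iota\,(L^\iota i)_\iota \;=\; \sum_{\mu,\gamma} j_\mu L^\iota_{\mu\iota} L^\iota_{\iota\gamma} i_\gamma.
\]
The final step is to invoke the single-term collapse $L^\iota_{\mu\iota}L^\iota_{\iota\gamma}=L^\iota_{\mu\gamma}$ extracted from the proof of Lemma~\ref{lemma:lkrmatrices-orthogonal-rss}, where the double sum for $(L^\iota)^2=L^\iota$ reduces to the lone term $\alpha=\iota$; this yields $jL^\iota i=\tr(jL^\iota i)$.

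The main obstacle is the bookkeeping needed to identify ``applying the substitution procedure'' with ``restricting to the subvariety $\Omega$'', after which the verification is a short matrix calculation powered by Proposition~\ref{proposition:distinct-ev-diagonalizable}'s closed form for $b$ and Lemma~\ref{lemma:lkrmatrices-orthogonal-rss}'s idempotent identities. A small but essential subtlety is that $F_\iota$ is not $B$-invariant on all of $T^*(\mathfrak{b}\times\mathbb{C}^n)^{rss}$; the invariance one needs is forced only on $\Omega$, where the class of $\mu(r,s,i,j)$ in $\mathfrak{b}^*$ is represented by a diagonal matrix and hence fixed pointwise by $\Ad_b^*$.
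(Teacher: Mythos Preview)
Your argument is correct. The paper states this corollary without proof, presumably intending it as the outcome of carrying out the recursive substitutions of Systematic Procedure~\ref{coordinatering:coordinate-ring-off-diagonal} explicitly; your route is more conceptual and avoids that computation entirely.

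The key difference is that you recast ``apply the substitution procedure'' as ``restrict to the $B$-stable subvariety $\Omega$ where the strictly sub-diagonal moment-map equations vanish,'' and then observe that on $\Omega$ the element $\mu(r,s,i,j)\in\mathfrak{b}^*$ is diagonal and therefore fixed by $\Ad_b^*$. This lets you transport $F_\iota$ unchanged to the diagonalized point, where it collapses to $i'_\iota j'_\iota$; unwinding via the closed forms \eqref{eqn:diagonalizingmatrix-B}--\eqref{eqn:diagonalizingmatrix-inverse-of-B} and the identity $L^\iota_{\mu\iota}L^\iota_{\iota\gamma}=L^\iota_{\mu\gamma}$ from the proof of Lemma~\ref{lemma:lkrmatrices-orthogonal-rss} gives $\tr(jL^\iota i)$ at the original point. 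Your final remark that $B$-invariance of $F_\iota$ holds only on $\Omega$, not on all of $T^*(\mathfrak{b}\times\mathbb{C}^n)^{rss}$, is exactly the right caveat. The payoff of your approach is that it simultaneously explains \emph{why} the answer is $\tr(jL^\iota i)$ (both sides are $B$-invariant on $\Omega$ and agree on the diagonal slice), whereas a direct recursive computation would merely verify it.
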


\subsection{B-invariant functions}\label{subsection:B-invariantfunctions}

We will show that $B$-invariant functions on $\mu^{-1}(0)^{rss}$ include $F_{\iota}$ (involving $r$, $i$, and $j$) as  in Proposition~\ref{proposition:diagonal-rss-natural-matrix-representation}, $G_{\iota}$ (involving $r$ and $s$) as in Proposition~\ref{proposition:B-invariant-functions-involving-s}, $H_{\iota}$
(involving $r$) as in Proposition~\ref{proposition:B-invariant-functions-diagonal-entries-of-r}, and $K_{\gamma\nu}$ (involving the inverse of the difference of the diagonal coordinates of $r$) as in Proposition~\ref{proposition:B-invariant-invert-difference-diag-r}. They are summarized as follows: 
\[ 
\begin{aligned}
F_{\iota}(r,s,i,j) &=  \tr\left(j L^{\iota} i  \right), 
			\\
G_{\iota}(r,s,i,j)&=
  \tr \left( L^{\iota} s\right),  
			\\ 
H_{\iota}(r,s,i,j)&=\tr(L^{\iota}r),  
			\\
K_{\gamma\nu}(r,s,i,j) &= [\tr((L^{\nu}-L^{\gamma})r)]^{-1},  
			\\
\end{aligned}
\]  
where $1\leq \iota\leq n\mbox{ and }\: 1\leq \gamma< \nu \leq n$.

The rational functions above coincide with the classical notion that the trace of an oriented cycle (of a quiver) as well as the trace of a path that begin and end at a framed vertex is an invariant function (\cite{MR958897}, \cite{MR1834739}, \cite{MR1623674}). 
Furthermore, a strategy to calculate semi-invariant polynomials is given in the proof of Proposition 8.2.1 in the Appendix by Gan and Ginzburg (\cite{MR2210660}); 
these techniques apply to Nakajima's affine varieties $\mathfrak{M}_0(\mathbf{v},\mathbf{w})$ 
and quiver varieties $\mathfrak{M}(\mathbf{v},\mathbf{w})$ (\cite{MR1604167}).

\begin{proposition}\label{proposition:diagonal-rss-natural-matrix-representation}
Denoting 
$F_{\iota}(r,s,i,j):= ([r,s]+ij)_{\iota\iota}$ from Corollary~\ref{corollary:diag-fn-defining-matrixvariety-explicit-eqn},  
\[ 
F_{\iota}(r,s,i,j) =  \left[
\tr\left( \prod_{
1\leq k\leq n, k\not=\iota 
} l_k(r) \right)\right]^{-1}  \tr\left(  j \left(\prod_{
1\leq k\leq n, k\not=\iota
} l_k(r) \right)i \right),  
\]  a $B$-invariant function. 
\end{proposition}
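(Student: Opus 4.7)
The plan is to separate the claim into two parts: first, establish the closed-form expression for $F_\iota$ on $\mu^{-1}(0)^{rss}$, and second, verify $B$-invariance directly from the matrix formula.

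For the formula, the right-hand side is well-defined on $\mu^{-1}(0)^{rss}$: since $r$ is upper triangular with pairwise distinct eigenvalues, $\prod_{k\neq \iota} l_k(r)$ is upper triangular with diagonal entries $\prod_{k\neq\iota}(r_{jj}-r_{kk})$; this vanishes for $j\neq\iota$ (take $k=j$) and equals $\prod_{k\neq\iota}(r_{\iota\iota}-r_{kk})\neq 0$ for $j=\iota$. Hence $\tr\!\big(\prod_{k\neq\iota} l_k(r)\big)$ is invertible, so both $L^\iota$ and the scalar factor are defined. The identity $F_\iota=\tr(jL^\iota i)$ is the content of Corollary~\ref{corollary:diag-fn-defining-matrixvariety-explicit-eqn}, obtained by applying Systematic Procedure~\ref{coordinatering:coordinate-ring-off-diagonal} to eliminate the off-diagonal $s_{\mu\nu}$'s from $([r,s]+ij)_{\iota\iota}$. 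The displayed expression in the proposition is then just this identity with $L^\iota$ replaced by its Definition~\ref{defn:coordinatefunction-of-product-of-rs} and the scalar $[\tr(\prod_{k\neq\iota}l_k(r))]^{-1}$ pulled outside the trace by linearity.

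For $B$-invariance, recall the $B$-action $b.(r,s,i,j)=(\Ad_b(r),\Ad_b^*(s),bi,jb^{-1})$. I would compute directly from the matrix form $F_\iota(r,s,i,j)=\tr(jL^\iota(r)i)$. Under the $B$-action,
\[
F_\iota(b.(r,s,i,j))=\tr\!\bigl((jb^{-1})\,L^\iota(\Ad_b r)\,(bi)\bigr).
\]
By Lemma~\ref{lemma:Baction-on-Liota-operator}, $L^\iota(\Ad_b r)=\Ad_b(L^\iota(r))=bL^\iota(r)b^{-1}$, so the right-hand side collapses to
\[
\tr\!\bigl(jb^{-1}\cdot bL^\iota(r)b^{-1}\cdot bi\bigr)=\tr(jL^\iota(r)i)=F_\iota(r,s,i,j).
\]
Note that $s$ does not appear in the closed-form expression, so the overlined $\Ad_b^*$-action plays no role; this is consistent because the $s$-dependence has already been eliminated via the moment-map relations.

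The main (mild) obstacle is checking that nothing goes wrong when passing between the two descriptions of $F_\iota$: the left-hand side $([r,s]+ij)_{\iota\iota}$ is manifestly $B$-equivariant as a section valued in $\mathfrak{b}^*$ (since $\mu$ is $B$-equivariant and the $\iota\iota$-coordinate is a weight-zero projection), while the right-hand side is a rational function well-defined on $\mu^{-1}(0)^{rss}$. Both descriptions must therefore agree as regular functions on $\mu^{-1}(0)^{rss}$, and the $B$-invariance deduced from the right-hand description automatically transfers. No new calculation beyond Lemma~\ref{lemma:Baction-on-Liota-operator} and Corollary~\ref{corollary:diag-fn-defining-matrixvariety-explicit-eqn} is needed.
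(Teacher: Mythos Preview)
Your proposal is correct and follows essentially the same approach as the paper: the formula is taken directly from Corollary~\ref{corollary:diag-fn-defining-matrixvariety-explicit-eqn} (with $L^\iota$ unwound via Definition~\ref{defn:coordinatefunction-of-product-of-rs}), and $B$-invariance is verified by the identical computation using Lemma~\ref{lemma:Baction-on-Liota-operator} to replace $L^\iota(\Ad_b r)$ by $bL^\iota(r)b^{-1}$ and cancel. Your additional remarks on well-definedness of the scalar factor and the disappearance of $s$ are helpful context but not required for the argument.
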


\begin{proof}
For any $d\in B$, 
\[\begin{aligned} 
F_{\iota}(d.(r,s,i,j)) &=  \tr(jd^{-1} L^{\iota}(\Ad_d(r)) di) \\
&= \tr(  j d^{-1} \: d L^{\iota} (r)  d^{-1} \:  d i)   \mbox{ by Lemma~\ref{lemma:Baction-on-Liota-operator}} \\
&=\tr(  jL^{\iota}(r) i) = F_{\iota}(r,s,i,j).  \\
\end{aligned} 
\] 
\end{proof}

\begin{proposition}\label{proposition:B-invariant-functions-involving-s}
Denoting $G_{\iota}(r,s,i,j):=s_{\iota\iota}'$ from  \eqref{eqn:s-single-prime-first-prop},   
\[ G_{\iota}(r,s,i,j)=
\left[ \tr\left( \prod_{
1\leq k\leq n, k\not=\iota 
} l_k(r) \right)\right]^{-1} 
  \tr \left(\prod_{ 
1\leq k\leq n, k\not=\iota 
} l_k(r) \; s\right),   
\] 
a $B$-invariant function. 
\end{proposition}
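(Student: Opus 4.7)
The plan is to unpack the formula $G_\iota(r,s,i,j) = \tr(L^\iota(r)\, s)$ where $L^\iota(r)$ is the idempotent from Definition~\ref{defn:coordinatefunction-of-product-of-rs}, so that the normalization factor $[\tr(\prod_{k\neq \iota} l_k(r))]^{-1}$ gets absorbed. Once expressed this way, $B$-invariance reduces to a short computation using Lemma~\ref{lemma:Baction-on-Liota-operator} and the cyclicity of trace, exactly as in the proof of Proposition~\ref{proposition:diagonal-rss-natural-matrix-representation}.

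First I would observe that the expression $\tr(L^\iota(r)\, s)$ is a well-defined function on $\mathfrak{b}\times \mathfrak{b}^*$: since $r \in \mathfrak{b}$ and $\mathfrak{b}$ is closed under matrix multiplication, $L^\iota(r)$ lies in $\mathfrak{b}$. Hence for any representative $s' \in \mathfrak{g}$ of the class $s \in \mathfrak{g}/\mathfrak{u} \cong \mathfrak{b}^*$, the value $\tr(L^\iota(r)\, s')$ is independent of the choice of representative, because if $u \in \mathfrak{u}$ is strictly upper triangular then $L^\iota(r)\, u$ is strictly upper triangular and has trace zero. In particular the pairing used here is exactly the one the paper uses to identify $\mathfrak{b}^*$ with $\mathfrak{g}/\mathfrak{u}$.

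Next I would compute, for arbitrary $d \in B$,
\[
G_\iota(d.(r,s,i,j)) = \tr\bigl(L^\iota(\Ad_d(r))\cdot \Ad_d^*(s)\bigr).
\]
By Lemma~\ref{lemma:Baction-on-Liota-operator}, $L^\iota(\Ad_d(r)) = \Ad_d(L^\iota(r)) = d L^\iota(r) d^{-1}$, which lies in $\mathfrak{b}$. Picking $dsd^{-1}$ as a representative of $\Ad_d^*(s) = \overline{dsd^{-1}}$ and applying the previous paragraph together with cyclicity of trace yields
\[
\tr\bigl(dL^\iota(r)d^{-1} \cdot dsd^{-1}\bigr) = \tr\bigl(L^\iota(r)\, s\bigr) = G_\iota(r,s,i,j).
\]

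There is essentially no obstacle: the only subtlety is bookkeeping around $\mathfrak{b}^* = \mathfrak{g}/\mathfrak{u}$, i.e.\ checking that the cyclic reshuffling is legal in the quotient. That legality is exactly what is provided by $L^\iota(r) \in \mathfrak{b}$, so the argument is essentially a mirror of the one for $F_\iota$, with the role of $jL^\iota i$ replaced by $L^\iota s$ paired via the trace.
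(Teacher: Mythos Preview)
Your proof is correct and follows essentially the same approach as the paper: rewrite $G_\iota$ as $\tr(L^\iota(r)\,s)$, apply Lemma~\ref{lemma:Baction-on-Liota-operator} to get $L^\iota(\Ad_d(r))=dL^\iota(r)d^{-1}$, and then use cyclicity of trace. Your additional remark that $\tr(L^\iota(r)\,s')$ is independent of the representative $s'$ of $s\in\mathfrak{g}/\mathfrak{u}$ (because $L^\iota(r)\in\mathfrak{b}$) is a welcome clarification the paper leaves implicit.
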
 

\begin{proof}
 For any $d\in B$, 
\[\begin{aligned} 
G_{\iota}(d.(r,s,i,j)) &= \tr(L^{\iota}(\Ad_d(r)) dsd^{-1}) \\
 &= \tr(d L^{\iota}(r) \cancel{d^{-1}}  \cancel{d}sd^{-1}) \mbox{ by Lemma~\ref{lemma:Baction-on-Liota-operator}}  \\ 
 &= \tr(d L^{\iota}(r)  sd^{-1}) \\ 
  &= \tr(L^{\iota}(r)  s) = G_{\iota}(r,s,i,j).\\  
\end{aligned} 
\] 
\end{proof}

\begin{proposition}\label{proposition:B-invariant-functions-diagonal-entries-of-r}
Denoting $H_{\iota}(r,s,i,j):=r_{\iota\iota}$ from Proposition~\ref{proposition:diag-entries-r-is-trace}, 
\[ H_{\iota}(r,s,i,j)=
\left[ \tr\left( \prod_{
1\leq k\leq n, k\not=\iota 
} l_k(r) \right)\right]^{-1} 
  \tr \left(\prod_{ 
1\leq k\leq n, k\not=\iota 
} l_k(r) \; r\right),  
\] a $B$-invariant function. 
\end{proposition}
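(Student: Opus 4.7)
The plan is to deduce both claims almost immediately from results already established in the preliminaries, in parallel with the proofs of Propositions~\ref{proposition:diagonal-rss-natural-matrix-representation} and \ref{proposition:B-invariant-functions-involving-s}.

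First I would establish the stated formula. By Proposition~\ref{proposition:diag-entries-r-is-trace} we have $r_{\iota\iota} = \tr(L^{\iota} r)$, and unfolding the definition of $L^{\iota}$ from \eqref{eqn:n-idempotents-L-iota} and pulling the scalar $\left[\tr\bigl(\prod_{k\neq\iota}l_k(r)\bigr)\right]^{-1}$ outside the trace yields exactly the displayed expression for $H_{\iota}(r,s,i,j)$. This is a purely formal rewriting and requires no new computation.

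Next I would verify $B$-invariance by the same argument pattern used in the two preceding propositions. For any $d \in B$, one has $H_{\iota}(d.(r,s,i,j)) = \tr\bigl(L^{\iota}(\Ad_d(r))\cdot\Ad_d(r)\bigr)$. Applying Lemma~\ref{lemma:Baction-on-Liota-operator} gives $L^{\iota}(\Ad_d(r)) = d\,L^{\iota}(r)\,d^{-1}$, so the expression becomes $\tr\bigl(d\,L^{\iota}(r)\,d^{-1}\cdot d r d^{-1}\bigr) = \tr\bigl(d\,L^{\iota}(r)\,r\,d^{-1}\bigr) = \tr(L^{\iota}(r)\,r)$ by cyclicity of the trace, which equals $H_{\iota}(r,s,i,j)$. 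As a sanity check, one sees directly that $B$-invariance of $H_{\iota}$ is consistent with Proposition~\ref{proposition:diagonal-being-invariant}, which guarantees that diagonal entries of a Borel matrix are unchanged by conjugation by elements of $B$.

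There is no real obstacle here: the substantive content was already absorbed in Proposition~\ref{proposition:diag-entries-r-is-trace} (which identifies $r_{\iota\iota}$ as $\tr(L^{\iota}r)$) and Lemma~\ref{lemma:Baction-on-Liota-operator} (which controls how $L^{\iota}$ transforms under the $B$-action). The only minor care needed is to note that the trace pairing $\tr(L^{\iota}r)$ is well-defined at the level of $\mathfrak{b}^*\cong \mathfrak{g}/\mathfrak{u}$-valued quantities when this function is later used in the $B$-invariant context of Section~\ref{subsection:B-invariantfunctions}; but since $r\in\mathfrak{b}$ here rather than $s\in\mathfrak{b}^*$, no such subtlety arises and the argument is a two-line trace manipulation.
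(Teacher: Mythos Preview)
Your proposal is correct and follows essentially the same approach as the paper. The paper's proof consists of a single sentence instructing the reader to replace $s$ with $r$ in the argument for Proposition~\ref{proposition:B-invariant-functions-involving-s}; you have simply written out that substitution explicitly, invoking Proposition~\ref{proposition:diag-entries-r-is-trace} and Lemma~\ref{lemma:Baction-on-Liota-operator} at the expected points.
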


\begin{proof}
This proof is analogous to the proof of  Proposition~\ref{proposition:B-invariant-functions-involving-s}: to prove this proposition, replace $s$ with $r$.  
\end{proof}   
 
\begin{corollary}\label{corollary:B-invar-fns-operator-and-r}
$H_{\iota}(r,s,i,j)$ in Proposition~\ref{proposition:B-invariant-functions-diagonal-entries-of-r} may be written as $e_{\iota}^* \: r\: e_{\iota}$, 
 where $e_{\iota}$ is the standard basis vector for $\mathbb{C}^n$ and $e_{\iota}^*$ is the standard covector. 
\end{corollary}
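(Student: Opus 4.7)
The plan is to observe that this is essentially a trivial linear-algebra identity combined with the identification of $H_{\iota}$ established in Proposition~\ref{proposition:diag-entries-r-is-trace}. By definition, $H_{\iota}(r,s,i,j) = r_{\iota\iota}$, so the only content of the corollary is the identity $r_{\iota\iota} = e_{\iota}^* \, r \, e_{\iota}$, which holds for any $n \times n$ matrix $r$.

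First I would recall from Proposition~\ref{proposition:diag-entries-r-is-trace} that $H_{\iota}(r,s,i,j) = r_{\iota\iota} = \tr(L^{\iota} r)$. Then I would compute $e_{\iota}^* \, r \, e_{\iota}$ directly: since $e_{\iota}$ picks out the $\iota$-th column of $r$ and $e_{\iota}^*$ then picks out its $\iota$-th component, we obtain
\[
e_{\iota}^* \, r \, e_{\iota} = \sum_{\alpha, \beta} (e_{\iota}^*)_{\alpha} \, r_{\alpha\beta} \, (e_{\iota})_{\beta} = r_{\iota\iota}.
\]
Combining the two equalities yields the claim.

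There is no real obstacle here; the result is essentially a reformulation of Proposition~\ref{proposition:diag-entries-r-is-trace} using elementary basis vectors in place of the idempotent $L^{\iota}$. The only observation worth making is that, while $L^{\iota}$ and $e_{\iota} e_{\iota}^*$ agree on the diagonal (both have a single $1$ in the $(\iota,\iota)$-position and zeros elsewhere along the diagonal), they differ off the diagonal; nevertheless, both quantities extract $r_{\iota\iota}$ after the relevant trace or quadratic-form operation. This alternative expression is useful because it exhibits $H_{\iota}$ in the form of a ``trace of a path'' $e_{\iota}^* r e_{\iota}$ in the sense alluded to earlier in Section~\ref{subsection:B-invariantfunctions}.
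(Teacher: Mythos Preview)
Your proof is correct and follows essentially the same approach as the paper: both recognize that the identity $e_{\iota}^* r e_{\iota} = r_{\iota\iota}$ is a triviality, and the paper's proof simply says ``This is clear.'' The only addition in the paper is a brief supplementary remark verifying directly that the expression $e_{\iota}^* r e_{\iota}$ is $B$-invariant (since $(drd^{-1})_{\iota\iota} = d_{\iota\iota} r_{\iota\iota} d_{\iota\iota}^{-1} = r_{\iota\iota}$), but this is not needed for the statement itself and your argument is complete as written.
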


\begin{proof}  
This is clear, and the product $e_{\iota}^* \: r\: e_{\iota}$ of matrices is $B$-invariant since for any $d\in B$, $(drd^{-1})_{\iota\iota}=d_{\iota\iota}r_{\iota\iota}d_{\iota\iota}^{-1}=r_{\iota\iota}$. 
\end{proof} 

\begin{proposition}\label{proposition:B-invariant-invert-difference-diag-r}
Denoting $K_{\gamma\nu}(r,s,i,j):= (r_{\nu\nu} - r_{\gamma\gamma})^{-1}$ 
from Proposition~\ref{proposition:difference-of-r-is-trace},  
where 
$1\leq \gamma < \nu \leq n$,  
\[ K_{\gamma\nu}(r,s,i,j) = [\tr((L^{\nu}-L^{\gamma})r)]^{-1}, 
\] a $B$-invariant function.  
\end{proposition}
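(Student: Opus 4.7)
The plan is to establish $B$-invariance by either of two routes, both of which are short given the machinery already developed.

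The most direct route is to observe that Proposition~\ref{proposition:difference-of-r-is-trace} already provides the identity
\[
(r_{\nu\nu}-r_{\gamma\gamma})^{-1}=[\tr((L^{\nu}-L^{\gamma})r)]^{-1},
\]
so it suffices to prove that $(r_{\nu\nu}-r_{\gamma\gamma})^{-1}$ is $B$-invariant. For any $d\in B$, Proposition~\ref{proposition:diagonal-being-invariant} gives $\diag(\Ad_d(r))=\diag(r)$, so in particular $(\Ad_d(r))_{\nu\nu}=r_{\nu\nu}$ and $(\Ad_d(r))_{\gamma\gamma}=r_{\gamma\gamma}$. Hence the difference $r_{\nu\nu}-r_{\gamma\gamma}$ is $B$-invariant. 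Since we are working on $\mu^{-1}(0)^{rss}$, the diagonal entries of $r$ are pairwise distinct, so $r_{\nu\nu}-r_{\gamma\gamma}$ is invertible everywhere on the locus and its reciprocal is a well-defined regular function, which is therefore also $B$-invariant.

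As a sanity check (and to parallel the proofs of the three previous propositions), I would verify $B$-invariance directly from the trace expression using Lemma~\ref{lemma:Baction-on-Liota-operator}. For any $d\in B$,
\[
\tr\bigl((L^{\nu}(\Ad_d(r))-L^{\gamma}(\Ad_d(r)))\,\Ad_d(r)\bigr)
=\tr\bigl(d(L^{\nu}(r)-L^{\gamma}(r))d^{-1}\cdot drd^{-1}\bigr)
=\tr\bigl((L^{\nu}(r)-L^{\gamma}(r))r\bigr),
\]
using cyclicity of the trace. Since the bracketed quantity is a regular function with invertible value on $\mu^{-1}(0)^{rss}$, its inverse is $B$-invariant as claimed.

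There is no real obstacle here; the work was already done in Proposition~\ref{proposition:diagonal-being-invariant} (invariance of diagonal entries), Proposition~\ref{proposition:difference-of-r-is-trace} (the trace identity), and Lemma~\ref{lemma:Baction-on-Liota-operator} (equivariance of $L^\iota$). The only subtlety worth recording explicitly is that invertibility of $\tr((L^{\nu}-L^{\gamma})r)$ on the regular semisimple locus is what allows $K_{\gamma\nu}$ to be regular there in the first place.
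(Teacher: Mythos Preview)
Your proposal is correct and essentially follows the paper's approach. The paper's proof writes $(K_{\gamma\nu})^{-1}=\tr(L^{\nu}r)-\tr(L^{\gamma}r)$ and appeals (implicitly) to the $B$-invariance of each $H_\iota=\tr(L^\iota r)$ established in the preceding proposition, then notes nonvanishing; your first route short-circuits this by invoking Proposition~\ref{proposition:diagonal-being-invariant} directly on $r_{\nu\nu}-r_{\gamma\gamma}$, and your second route makes the invariance computation explicit via Lemma~\ref{lemma:Baction-on-Liota-operator} and trace cyclicity---all three are immediate consequences of the same earlier lemmas.
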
 

\begin{proof} 
For any $1\leq \gamma< \nu\leq n$,  
$\left( K_{\gamma\nu}(r,s,i,j)\right)^{-1} =  \tr(L^{\nu}r - L^{\gamma}r)  
=  \tr(L^{\nu}r)  -  \tr(L^{\gamma}r)$. So $(K_{\gamma\nu})^{-1}$ is $B$-invariant. Since it is never vanishing, 
$K_{\gamma\nu}$ is $B$-invariant. 
\end{proof} 

\begin{corollary}\label{corollary:B-invar-fns-difference-of-operator-and-r} 
 $K_{\gamma\nu}(r,s,i,j)$ in Proposition~\ref{proposition:B-invariant-invert-difference-diag-r} 
 may be written as 
$(e_{\nu}^* (r-r_{\gamma\gamma}\I)e_{\nu})^{-1}$. 
\end{corollary}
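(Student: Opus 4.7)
The plan is to observe that the expression $e_{\nu}^*(r - r_{\gamma\gamma}\I)e_{\nu}$ is, by definition of the elementary basis vector $e_{\nu}$ and covector $e_{\nu}^*$, precisely the $(\nu,\nu)$-entry of the matrix $r - r_{\gamma\gamma}\I$. Since scalar multiples of the identity only affect diagonal entries and subtract $r_{\gamma\gamma}$ from each, this $(\nu,\nu)$-entry equals $r_{\nu\nu} - r_{\gamma\gamma}$. Taking the reciprocal, which is well-defined because $r \in \mathfrak{b}^{rss}$ guarantees pairwise distinct diagonal entries so in particular $r_{\nu\nu} \neq r_{\gamma\gamma}$ whenever $\gamma < \nu$, yields
\[
\left(e_{\nu}^*(r - r_{\gamma\gamma}\I)e_{\nu}\right)^{-1} = (r_{\nu\nu} - r_{\gamma\gamma})^{-1} = K_{\gamma\nu}(r,s,i,j),
\]
as claimed. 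That is the entire content of the asserted identity.

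For completeness one should also verify that the expression $(e_{\nu}^*(r-r_{\gamma\gamma}\I)e_{\nu})^{-1}$ is manifestly $B$-invariant, which squares with Proposition~\ref{proposition:B-invariant-invert-difference-diag-r}. The plan is to invoke Proposition~\ref{proposition:diagonal-being-invariant} twice: it states that both $r_{\nu\nu}$ and $r_{\gamma\gamma}$, viewed as regular functions on $\mathfrak{b}$, are individually fixed under the $B$-conjugation action. Hence $r_{\nu\nu} - r_{\gamma\gamma}$ is a $B$-invariant function on $\mathfrak{b}^{rss}$, and so is its reciprocal.

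There is no substantive obstacle: the corollary is essentially a restatement identifying the trace-free formula of Proposition~\ref{proposition:B-invariant-invert-difference-diag-r} with a simpler quadratic-form expression. The only thing to be a little careful about is to note at the outset that $r - r_{\gamma\gamma}\I$ is well-defined as a matrix (since $r_{\gamma\gamma}$ is a scalar), and that nothing in the expression requires any hypothesis beyond $\gamma \neq \nu$ together with $r \in \mathfrak{b}^{rss}$ to ensure invertibility.
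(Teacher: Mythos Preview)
Your proof is correct and follows essentially the same approach as the paper: both first observe that $e_{\nu}^*(r-r_{\gamma\gamma}\I)e_{\nu}=r_{\nu\nu}-r_{\gamma\gamma}$ is immediate from the definitions, and then verify $B$-invariance. The only cosmetic difference is that the paper checks $B$-invariance by directly computing $(d(r-r_{\gamma\gamma}\I)d^{-1})_{\nu\nu}=r_{\nu\nu}-r_{\gamma\gamma}$, whereas you invoke Proposition~\ref{proposition:diagonal-being-invariant} for each of $r_{\nu\nu}$ and $r_{\gamma\gamma}$ separately; these amount to the same thing.
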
  

\begin{proof}
This is clear, and the product of matrices 
$(e_{\nu}^* (r-r_{\gamma\gamma}\I)e_{\nu})^{-1}$ is $B$-invariant since for any $d\in B$ and for any $\gamma < \nu$, 
$(d(r-r_{\gamma\gamma}\I)d^{-1})_{\nu\nu} 
 =(drd^{-1}-r_{\gamma\gamma}\I)_{\nu\nu} 
 = (drd^{-1})_{\nu\nu} -(r_{\gamma\gamma}\I)_{\nu\nu}
=r_{\nu\nu}-r_{\gamma\gamma}$, which is never vanishing; so its inverse is well-defined. 
\end{proof}

\subsection{The initial ideal and regular sequence}\label{subsection:initial-ideal-regular-sequence}

\begin{definition}\label{definition:passingfrom-regular-to-polynomial-concept}
We will define $z_{ij}^{(kl)}:= \dfrac{r_{kl}}{r_{ii}-r_{jj}} $ for $i\not= j$. 
\end{definition}


\begin{weight}\label{weight:weight-on-coordinate-ring} 
The coordinate ring $\mathbb{C}[x_k,y_l,z_{ij}^{(kl)}]$ has the following integral weight on each variable: $\wt(x_k)= 1, \wt(y_l)=1, \wt(z_{ij}^{(kl)})=0$. 
\end{weight} 

\begin{monomial}\label{monomial:grobner-basis-monomial-ordering}
We fix a term order $>$ on $\mathbb{C}[x_k, y_l, z_{ij}^{(kl)}]$ via the following refinement: 
write 
\[ m= x_1^{a_1}\cdots x_n^{a_n}
y_n^{b_n}\cdots y_1^{b_1}
(z_{ij}^{(kl)})^{c_{ij}^{(kl)}} >_{\lex,\rev}
 x_1^{a_1'}\cdots x_n^{a_n'} 
 y_n^{b_n'}\cdots y_1^{b_1'}
 (z_{ij}^{(kl)})^{{c_{ij}^{(kl)}}'}=m'
\] 
  if
\[ (a_1,\ldots, a_n,b_n,\ldots, b_1,c_{ij}^{(kl)}) - 
  (a_1',\ldots, a_n', b_n',\ldots, b_1', c_{ij}^{(kl)'})>0 
\] in the sense that the left-most nonzero coordinate of the difference of the exponent vectors is positive. 

We impose any ordering on the $z_{ij}^{(kl)}$ as long as they succeed the ordering on the $x_{\iota}$'s and the $y_{\gamma}$'s; thus, we will view them as constants, which coincide with their weights as imposed in Remark~\ref{weight:weight-on-coordinate-ring}. 
\end{monomial}	

We will write $>$ rather than $>_{\lex,\rev}$ throughout this section. 

\begin{remark}\label{remark:monomial-ordering-no-mention-of-total-ordering} 
Total ordering by total degree in Monomial Ordering~\ref{monomial:grobner-basis-monomial-ordering} does not need to be mentioned since each $F_{\iota}$ is a homogeneous quadratic function. Furthermore, if we want to view $z_{ij}^{(kl)}$ as a rational function (rather than as a constant) and impose an ordering, one may define such ordering by $\dfrac{f_1}{f_2}>\dfrac{g_1}{g_2}$ if $f_1 g_2 > f_2 g_1$. 
%
%
\end{remark}

Note that we have imposed lexicographical order on the $x_i$'s, and 
reversed the indices on the $y_i$'s and applied lex on the $y_i$'s
(caution: this is not the same as reverse lex order since that has infinite descending sequences; thus it is not a monomial order), with the ordering on the $x_i$'s preceding the $y_i$'s. Monomial Ordering~\ref{monomial:grobner-basis-monomial-ordering} of  $\mathbb{C}[x_k,y_l,z_{ij}^{(kl)}]$ is multiplicative (i.e., if $m>m'$, then $m\widetilde{m}> m'\widetilde{m}$) and artinian ($m>1$ for all nonunit monomials $m$).

\begin{lemma}\label{lemma:initial-term-of-each-F}
With respect to Monomial Ordering~\ref{monomial:grobner-basis-monomial-ordering}, the initial term $\In(F_{\iota})$ of each $F_{\iota}$ equals $x_{\iota}y_{\iota}$.
\end{lemma}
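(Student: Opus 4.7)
The plan is to expand $F_\iota = \tr(jL^\iota i)$ explicitly in the cotangent coordinates and then read off the leading monomial using the vanishing structure of $L^\iota$ from Lemma~\ref{lemma:lkrmatrices-orthogonal-rss}. Writing $i=(x_1,\ldots,x_n)^T$ and $j=(y_1,\ldots,y_n)$, one has
\[
F_\iota \;=\; \sum_{\gamma,\mu=1}^{n} y_\gamma\, L^\iota_{\gamma\mu}\, x_\mu .
\]
By the vanishing conditions \eqref{gather:L-iota-operator-options}, $L^\iota_{\gamma\mu}=0$ unless $\gamma\le\iota\le\mu$, and the proof of Lemma~\ref{lemma:lkrmatrices-orthogonal-rss} records $L^\iota_{\iota\iota}=1$. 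Therefore
\[
F_\iota \;=\; x_\iota y_\iota \;+\; \sum_{\substack{\gamma\le\iota\le\mu\\(\gamma,\mu)\neq(\iota,\iota)}} L^\iota_{\gamma\mu}\, x_\mu y_\gamma,
\]
and each coefficient $L^\iota_{\gamma\mu}$ above is (via the explicit product-of-$l_k(r)$ formulas from the remark following Proposition~\ref{proposition:distinct-ev-diagonalizable}) a polynomial in the variables $z_{ij}^{(kl)}$.

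Next I would compare $x_\iota y_\iota$ against each competing monomial $x_\mu y_\gamma$ under Monomial Ordering~\ref{monomial:grobner-basis-monomial-ordering}. There are exactly two cases. If $\mu>\iota$, then the $x$-part of the exponent vector differs first at position $a_\iota$ (since $x_\iota$ precedes $x_\mu$ in the lex order on the $x$'s), where $x_\iota y_\iota$ has exponent $1$ and $x_\mu y_\gamma$ has exponent $0$; so $x_\iota y_\iota > x_\mu y_\gamma$. If instead $\mu=\iota$ and $\gamma<\iota$, the $x$-parts agree and the comparison passes to the $y$-part; because the $y$-indices are reversed so that $y_n>\cdots>y_1$, position $b_\iota$ is reached first and again $x_\iota y_\iota$ has exponent $1$ there while $x_\iota y_\gamma$ has exponent $0$. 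Thus $x_\iota y_\iota$ dominates every other $(x,y)$-monomial appearing in $F_\iota$.

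Finally, because the $z_{ij}^{(kl)}$ succeed all of the $x$'s and $y$'s in the monomial order and are treated as constants of weight zero (Remark~\ref{weight:weight-on-coordinate-ring}), the polynomial-in-$z$ coefficients do not affect the comparison of $(x,y)$-leading terms. Combined with $L^\iota_{\iota\iota}=1$, this yields $\In(F_\iota)=x_\iota y_\iota$. There is no real obstacle here: the entire argument is a bookkeeping exercise on the support of $L^\iota$ together with a direct lex comparison, and the only point worth pausing on is making sure the coefficient of $x_\iota y_\iota$ is genuinely the nonzero constant $1$ rather than something that could cancel, which is precisely the content of the trace computation in Lemma~\ref{lemma:lkrmatrices-orthogonal-rss}.
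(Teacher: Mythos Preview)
Your proof is correct and follows essentially the same approach as the paper: both identify that the monomials appearing in $F_\iota$ are of the form $x_\mu y_\gamma$ with $\gamma\le\iota\le\mu$ (you via the vanishing pattern of $L^\iota$ from Lemma~\ref{lemma:lkrmatrices-orthogonal-rss}, the paper via the multi-index description from Corollary~\ref{corollary:diag-fn-defining-matrixvariety-explicit-eqn}), note that $x_\iota y_\iota$ appears with coefficient $1$, and then verify it dominates under the given ordering. Your write-up is, if anything, slightly more explicit in carrying out the lex comparison case by case.
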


\begin{proof} 
Since each monomial corresponds to a unique exponent vector, 
write the exponents of each monomial of $F_{\iota}$ as a pair of multi-indices $\mathbf{a}=(a_1,\ldots, a_n)$ and $\mathbf{b}=(b_1,\ldots, b_n)$, i.e., 
$\mathbf{a}$ and $\mathbf{b}$ 
may be thought of as column vectors living in $\mathbb{Z}_{\geq 0}^n$ (for the time being, we omit keeping track of $z_{ij}^{(kl)}$). 
It is clear by Corollary~\ref{corollary:diag-fn-defining-matrixvariety-explicit-eqn} that 
both $\mathbf{a}$ and $\mathbf{b}$ are in 
$\{e_1,\ldots, e_n \}$, where $e_i$ is the standard basis vector for $\mathbb{Z}^n$, since exactly one of the exponents for $x_{\gamma}$'s and one of the exponents for  $y_{\nu}$'s are nonzero for each monomial of $F_{\iota}$. Since the degree of each monomial of $F_{\iota}$ is 2, higher powers of $x_{\gamma}$ or $y_{\nu}$ cannot occur. 

Now for a fixed $F_{\iota}$, we inspect the monomials in Corollary~\ref{corollary:diag-fn-defining-matrixvariety-explicit-eqn} to conclude 
the inclusion of sets 
\[\{ \mathbf{a}: \mathbf{a} \mbox{ is the multi-index of some monomial of }F_{\iota} \}\supseteq \{e_{\iota},\ldots, e_{n} \}.  
\] 
 The vector $\mathbf{a}$ corresponding to the first term $x_{\iota}y_{\iota}$ is $e_{\iota}$ while all the other summations show that $\mathbf{a}$ is in $\{ e_{\iota+1},\ldots, e_n\}$.   
When $\mathbf{a} =e_{\iota}$, 
 the possibilities for its corresponding  $\mathbf{b}$-vector 
 take on all values $e_{1},\ldots, e_{\iota}$,  
 which one may check by looking at the monomials in 
 Corollary~\ref{corollary:diag-fn-defining-matrixvariety-explicit-eqn}.  
In order to determine the leading term, 
if $\mathbf{a}=e_{\alpha}$, we want $\alpha$ to be as small as possible 
 since we have imposed lex on the $x_{\gamma}$'s, and  
 if $\mathbf{b}=e_{\beta}$, we want $\beta$ to be as big as possible  
since we have imposed a reverse ordering on the $y_{\nu}$'s. 
Since $x_{\iota}y_{\iota}$ occurs once in $F_{\iota}$ with coefficient 1 with 
$c_{ij}^{(kl)}=0$, the initial term of $F_{\iota}$ is $x_{\iota}y_{\iota}$.  
\end{proof}


\begin{lemma}\label{lemma:initial-term-of-Fiota}
The initial terms of $\{ F_{\iota}\}_{1\leq \iota \leq n}$ form a regular sequence. 
\end{lemma}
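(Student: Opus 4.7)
The plan is to combine Lemma~\ref{lemma:initial-term-of-each-F}, which identifies $\In(F_\iota) = x_\iota y_\iota$, with the classical fact that a sequence of monomials in pairwise disjoint variables forms a regular sequence in a polynomial ring. Since the pairs $\{x_\iota, y_\iota\}$ for $\iota = 1, \ldots, n$ are manifestly disjoint subsets of the variables of $R := \mathbb{C}[x_k, y_l, z_{ij}^{(kl)}]$, the sequence $x_1 y_1, \ldots, x_n y_n$ should indeed be regular.

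I would proceed by induction on $\iota$. The base case is immediate since $R$ is an integral domain and $x_1 y_1$ is a nonzero monomial, hence a non-zerodivisor. For the inductive step, I would write $R = A_\iota[x_\iota, y_\iota]$, where $A_\iota$ is the polynomial subring generated by all variables other than $x_\iota, y_\iota$, and observe that the ideal $I_{\iota-1} := (x_1 y_1, \ldots, x_{\iota-1} y_{\iota-1})$ is extended from an ideal $I_{\iota-1}' \subseteq A_\iota$. Consequently $R/I_{\iota-1} \cong (A_\iota/I_{\iota-1}')[x_\iota, y_\iota]$. The core observation is then that multiplication by $uv$ on any polynomial ring $B[u,v]$ is injective: if $f = \sum_{a,b} f_{ab}\, u^a v^b$ satisfies $f \cdot uv = 0$, then comparing coefficients of each monomial $u^{a+1} v^{b+1}$ forces $f_{ab} = 0$, irrespective of whether $B$ has zero divisors. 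Applying this with $B = A_\iota/I_{\iota-1}'$ completes the induction. Properness of the full ideal $(x_1 y_1, \ldots, x_n y_n)$ is clear since none of its generators is a unit.

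I do not anticipate any genuine obstacle here: the argument is entirely structural and relies only on the disjointness of variable supports, together with the fact that $\mathbb{C}[x_\iota, y_\iota]$ is a free (hence flat) $\mathbb{C}$-module. The only point requiring care is the verification that each pair $(x_\iota, y_\iota)$ is truly absent from all preceding generators $x_\kappa y_\kappa$ for $\kappa < \iota$, but this is transparent from the indexing, so the main work is simply packaging the induction cleanly.
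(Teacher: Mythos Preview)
Your proposal is correct and follows the same approach as the paper: invoke Lemma~\ref{lemma:initial-term-of-each-F} to identify $\In(F_\iota)=x_\iota y_\iota$, then observe that these monomials in pairwise disjoint variables form a regular sequence. The paper's proof is the one-line assertion ``This follows from Lemma~\ref{lemma:initial-term-of-each-F},'' leaving the regular-sequence verification implicit, whereas you have simply written out that standard verification in full.
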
 

\begin{proof}
This follows from Lemma~\ref{lemma:initial-term-of-each-F}.  
\end{proof} 

\begin{lemma}\label{lemma:deriving-reg-seq-from-initialterms} 
The set $\{ F_{\iota}\}_{1\leq \iota\leq n}$ of functions is $\mathbb{C}[T^*(\mathfrak{b}\times \mathbb{C}^n)^{rss}]$-regular. 
\end{lemma}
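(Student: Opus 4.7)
The plan is to lift the regularity of the initial terms from Lemma~\ref{lemma:initial-term-of-Fiota} to the regularity of $\{F_\iota\}$ itself via a standard Gr\"obner-degeneration argument.

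First, I would observe that the coordinate ring $R:=\mathbb{C}[T^*(\mathfrak{b}\times\mathbb{C}^n)^{rss}]$ is a localization of a polynomial ring and hence Cohen-Macaulay. Under the substitution of Definition~\ref{definition:passingfrom-regular-to-polynomial-concept}, $R$ takes the form $A[x_k,y_l]$, where the coefficient ring $A$ absorbs the $z_{ij}^{(kl)}$, the diagonal entries $r_{ii}$ (together with the inverses $(r_{ii}-r_{jj})^{-1}$), and the $s_{ij}$. By Corollary~\ref{corollary:diag-fn-defining-matrixvariety-explicit-eqn}, each $F_\iota$ lies in $A[x_k,y_l]$ as a bilinear form in $x,y$ that is homogeneous of weight $2$ under the grading of Remark~\ref{weight:weight-on-coordinate-ring}.

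Next, I would invoke the Gr\"obner-degeneration principle: if the initial terms of a finite collection of polynomials in a polynomial ring form a regular sequence with respect to a monomial order, then so do the polynomials themselves. Concretely, the Rees algebra associated with the weight filtration produces a flat $\mathbb{A}^1$-family of ideals whose special fiber is $(x_1 y_1,\ldots,x_n y_n)$ and whose general fiber is $(F_1,\ldots,F_n)$. Flatness preserves codimension, and by Lemma~\ref{lemma:initial-term-of-Fiota} the special fiber is a codimension-$n$ complete intersection; hence $(F_1,\ldots,F_n)$ has codimension $n$ in the Cohen-Macaulay ring $R$. Since the ideal is generated by exactly $n$ elements of codimension equal to its minimal number of generators, the generators form a regular sequence by a standard Cohen-Macaulay criterion.

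The main obstacle is making the Gr\"obner-degeneration argument rigorous in a setting where the weight function of Remark~\ref{weight:weight-on-coordinate-ring} assigns weight zero to all the variables in $A$. To finesse this, I would first pass to the fraction field $K:=\mathrm{Frac}(A)$: over the polynomial ring $K[x_k,y_l]$ each $F_\iota$ is genuinely homogeneous of degree $2$ with initial term $x_\iota y_\iota$, and the classical Gr\"obner-degeneration over a field applies with no modification, yielding that $(F_1,\ldots,F_n)$ is a codimension-$n$ complete intersection in $K[x_k,y_l]$. Descent from $K[x_k,y_l]$ back to $A[x_k,y_l]$ is then obtained from the Cohen-Macaulay property of $R$ combined with the Hilbert-series (or Koszul-complex) characterization of regular sequences in Cohen-Macaulay rings, since the upper-triangular shape of the bilinear forms $F_\iota$ (with nonvanishing diagonal coefficients $y_\iota+\sum_{\alpha<\iota}L^\iota_{\alpha\iota}y_\alpha$ in the variable $x_\iota$) rules out extraneous minimal primes that could lower the codimension.
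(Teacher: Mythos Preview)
Your approach is essentially the paper's: both deduce regularity of the $F_\iota$ from regularity of their initial terms (Lemma~\ref{lemma:initial-term-of-Fiota}), and the paper simply cites Proposition~15.15 of Eisenbud's \emph{Commutative Algebra} for this implication. Your fraction-field detour in the last paragraph is unnecessary, since the monomial order of~\ref{monomial:grobner-basis-monomial-ordering} already lives in a genuine polynomial ring over $\mathbb{C}$ (so Eisenbud's result applies verbatim) and regular sequences survive the subsequent localization to $\mathbb{C}[T^*(\mathfrak{b}\times\mathbb{C}^n)^{rss}]$.
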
  

\begin{proof} 
The $F_{\iota}$'s form a regular sequence since their initial terms form a regular sequence by Proposition 15.15 in \cite{MR1322960}.  
\end{proof}

\section{Proof of Theorem \texorpdfstring{$\ref{theorem:rss-locus-to-complex-space-B-invariant}$}{rss isomorphism of varieties}}\label{section:proof-of-last-proposition-coordinate-ring}

The following proves Theorem~\ref{theorem:rss-locus-to-complex-space-B-invariant}. 

\begin{proof}
We have 
\[ 
\begin{aligned}
\mathbb{C}[\mu^{-1}(0)^{rss}] &= \mathbb{C}[T^*(\mathfrak{b}\times\mathbb{C}^n)^{rss}
]/\! \left<\: (\mu(r,s,i,j))_{\iota\gamma} \:\right>  \\ 
&\cong \dfrac{\mathbb{C}[r_{\alpha\beta}, \: x_k, \: y_l ][(r_{\nu\nu}-r_{\gamma\gamma})^{-1}]}{
\left<\: ([r,s]+ij)_{\iota\iota} \:\right> } \otimes \mathbb{C}[s_{11},\ldots, s_{nn}]  \\ 
&= \dfrac{\mathbb{C}[r_{\alpha\beta}, \: x_k, \: y_l ][(r_{\nu\nu}-r_{\gamma\gamma})^{-1}]}{
\left<\: F_{\iota}(r,s,i,j) \: \right> } \otimes \mathbb{C}[s_{11},\ldots, s_{nn}],  
\end{aligned}
\] 
where the second isomorphism holds by Corollary~\ref{corollary:diagonalfunction-matrix-variety} and the third 
equality holds by Corollary~\ref{corollary:diag-fn-defining-matrixvariety-explicit-eqn}. 
The locus 
		$\mu^{-1}(0)^{rss}$ is a complete intersection by Lemma~\ref{lemma:deriving-reg-seq-from-initialterms}. 
By Propositions~\ref{proposition:diagonal-rss-natural-matrix-representation}, \ref{proposition:B-invariant-functions-involving-s},  \ref{proposition:B-invariant-functions-diagonal-entries-of-r}, and \ref{proposition:B-invariant-invert-difference-diag-r}, 
\[ \begin{aligned} 
\mathbb{C}[\mu^{-1}(0)^{rss}]^B &=
   \dfrac{ \mathbb{C}\left[
    F_{\iota}(r,s,i,j),  G_{\iota}(r,s,i,j), H_{\iota}(r,s,i,j)\right]
    \left[ K_{\gamma\nu}(r,s,i,j) 
 \right]}{
\left< F_{\iota}(r,s,i,j)\right> } \\
&\cong  \mathbb{C}\left[r_{11},\ldots, r_{nn}, s_{11}',\ldots,s_{nn}'  
\right]\left[(r_{\nu\nu}-r_{\gamma\gamma})^{-1} \right] \\   
&\cong \mathbb{C}[\mathbb{C}^{2n}\setminus \Delta_n]. 
\end{aligned}
\] 
\end{proof}


\appendix
 
\def\cprime{$'$} \def\cprime{$'$} \def\cprime{$'$} \def\cprime{$'$}
\providecommand{\bysame}{\leavevmode\hbox to3em{\hrulefill}\thinspace}
\providecommand{\MR}{\relax\ifhmode\unskip\space\fi MR }
\providecommand{\MRhref}[2]{%
  \href{http://www.ams.org/mathscinet-getitem?mr=#1}{#2}
}
\providecommand{\href}[2]{#2}


\begin{thebibliography}{Lus90b}

\bibitem[Bei83]{Beilinson83}
Alexander Beilinson, \emph{Localization of representations of reductive {L}ie algebra}, Proc. of ICM 1982, (1983), 699--716.

\bibitem[BB81]{Beilinson-Bernstein-81}
Alexander Beilinson and Joseph Bernstein, \emph{Localization de $\mathfrak{g}$-modules}, C.R. Acad. Sci. Paris, \textbf{292} (1981), 15--18.


\bibitem[BM81]{Borho-Mac81}
Walter Borho and Robert MacPherson, \emph{Repr\'esentations des groupes de {W}eyl et homologie d'intersection pour les vari\'et\'es nilpotentes}, C. R. Acad. Sci. Paris S\'er. I Math., \textbf{292} (1981), no.~15, 707--710.  

\bibitem[CFR12]{Cerulli-Irelli-Feigin-Reineke}
Giovanni Cerulli Irelli, Evgeny Feigin and Markus Reineke, \emph{Quiver {G}rassmannians and degenerate flag varieties},  Algebra Number Theory, \textbf{6} (2012), no.~1, 165--194.

\bibitem[CG10]{MR2838836}
Neil Chriss and Victor Ginzburg, \emph{Representation theory and complex
  geometry}, Modern Birkh\"auser Classics, Birkh\"auser Boston Inc., Boston,
  MA, 2010, Reprint of the 1997 edition.
  
\bibitem[Cra11]{Craw11}
Alastair Craw, \emph{Quiver flag varieties and multigraded linear series}, Duke Math. J. \textbf{156} (2011), 469--500.

\bibitem[CB01]{MR1834739}
William Crawley-Boevey, \emph{Geometry of the moment map for representations of quivers}, Compositio Math. \textbf{126} (2001), no.~3, 257--293.

\bibitem[DG84]{Dol-Gold84}
Igor Dolgachev and Norman Goldstein, \emph{On the {S}pringer resolution of the minimal unipotent conjugacy class}, J. Pure Appl. Algebra, \textbf{32} (1984), 33--47.  


\bibitem[Eis95]{MR1322960}
David Eisenbud, \emph{Commutative algebra}, Graduate Texts in Mathematics, vol.
  150, Springer-Verlag, New York, 1995, With a view toward algebraic geometry.

\bibitem[GG06]{MR2210660}
Wee~Liang Gan and Victor Ginzburg, \emph{Almost-commuting variety,
  $\mathscr{D}$-modules, and {C}herednik algebras}, IMRP Int. Math. Res. Pap.
  (2006), 26439, 1--54, With an appendix by Ginzburg.

\bibitem[Gin98]{MR1649626}
Victor Ginzburg, \emph{Geometric methods in the representation theory of
  {H}ecke algebras and quantum groups}, Representation theories and algebraic
  geometry ({M}ontreal, {PQ}, 1997), NATO Adv. Sci. Inst. Ser. C Math. Phys.
  Sci., vol. 514, Kluwer Acad. Publ., Dordrecht, 1998, Notes by Vladimir
  Baranovsky [V. Yu. Baranovski{\u\i}], pp.~127--183.

\bibitem[Gin09]{Ginzburg-Nakajima-quivers}
\bysame, \emph{Lectures on {N}akajima's quiver varieties},
  \url{http://arxiv.org/pdf/0905.0686v2}, 2009.
    
\bibitem[GNR16]{GNR16} 
Eugene Gorsky, Andrei Negut, and Jacob Rasmussen, 
\emph{Flag {H}ilbert schemes, colored projectors and {K}hovanov-{R}ozansky homology}, \url{https://arxiv.org/abs/1608.07308}, 2016. 

\bibitem[HTT07]{Hotta-Takeuchi-Tanisaki}
Ryoshi Hotta, Kiyoshi Takeuchi and Toshiyuki Tanisaki, 
\emph{$\mathcal{D}$-modules, {P}erverse {S}heaves, and {R}epresentation {T}heory}. Progress in Mathematics, Birkh$\ddot{a}$user, 2007.   
 
  
\bibitem[Hum95]{Humphreys95}
James E. Humphreys, \emph{Conjugacy classes in semisimple algebraic groups}, Mathematical Surveys and Monographs, \textbf{43}, American Mathematical Society, Providence (1995), pp.~xviii+196. 

\bibitem[Im14]{Im-doctoral-thesis}
Mee~Seong Im, \emph{On semi-invariants of filtered representations of quivers
  and the cotangent bundle of the enhanced {G}rothendieck-{S}pringer
  resolution}, \url{https://www.ideals.illinois.edu/handle/2142/49392}, 2014.

\bibitem[KL09]{MR2525917}
Mikhail Khovanov and Aaron~D. Lauda, \emph{A diagrammatic approach to
  categorification of quantum groups. {I}}, Represent. Theory \textbf{13}
  (2009), 309--347.

\bibitem[KL11]{MR2763732}
\bysame, \emph{A diagrammatic approach to categorification of quantum groups
  {II}}, Trans. Amer. Math. Soc. \textbf{363} (2011), no.~5, 2685--2700.

\bibitem[Kos63]{Kostant63}
Bertram Kostant, \emph{Lie group representations on polynomial rings}, Amer. J. Math. \textbf{85} (1963), 327--404. 


\bibitem[LBP90]{MR958897}
Lieven Le~Bruyn and Claudio Procesi, \emph{Semisimple representations of
  quivers}, Trans. Amer. Math. Soc. \textbf{317} (1990), no.~2, 585--598.

\bibitem[Lus90a]{MR1035415}
George Lusztig, \emph{Canonical bases arising from quantized enveloping algebras},
  J. Amer. Math. Soc. \textbf{3} (1990), no.~2, 447--498.

\bibitem[Lus90b]{MR1182165}
\bysame, \emph{Canonical bases arising from quantized enveloping algebras.
  {II}}, Progr. Theoret. Phys. Suppl. (1990), no.~102, 175--201 (1991), Common
  trends in mathematics and quantum field theories (Kyoto, 1990).

\bibitem[Lus98]{MR1623674}
\bysame, \emph{On quiver varieties}, Adv. Math. \textbf{136} (1998), no.~1,
  141--182.

\bibitem[Lus00]{MR1758244}
\bysame, \emph{Semicanonical bases arising from enveloping algebras}, Adv.
  Math. \textbf{151} (2000), no.~2, 129--139.

\bibitem[LS85]{Lusztig-Spaltenstein-85}
George Lusztig and Nicolas Spaltenstein, \emph{On the generalized {S}pringer correspondence for classical groups}, Advanced Studies in Pure Mathematics, \textbf{6} (1985), 289--316.

\bibitem[Nak99]{MR1711344}
Hiraku Nakajima, \emph{Lectures on {H}ilbert schemes of points on surfaces}, University
  Lecture Series, vol.~18, American Mathematical Society, Providence, RI, 1999.
  
\bibitem[Nak16]{Nakajima16}
\bysame, \emph{More lectures on {H}ilbert schemes of points on surfaces}, Advanced Studies in Pure Mathematics \textbf{69} (2016), Development of Moduli Theory - Kyoto 2013, 173--205.


\bibitem[Nak98]{MR1604167}
\bysame, \emph{Quiver varieties and {K}ac-{M}oody algebras}, Duke Math.
  J. \textbf{91} (1998), no.~3, 515--560.


\bibitem[Nev11]{Nevins-GSresolutions}
Thomas Nevins, \emph{Stability and {H}amiltonian reduction for {G}rothendieck-{S}pringer resolutions},
  \url{http://www.math.uiuc.edu/~nevins/papers/b-hamiltonian-reduction-2011-0316.pdf},
  2011.
  
\bibitem[Prz15]{Przezdziecki15}
Tomasz Przezdziecki, \emph{Geometric approach to {KLR} algebras and their representation theory}, \url{http://www.math.uni-bonn.de/ag/stroppel/Master's\%20Thesis_Tomasz\%20Przezdziecki.pdf}, 2015. 

\bibitem[Rou08]{Rouquier-2-Kac-Moody-algebras}
Raphael Rouquier, \emph{2-{K}ac-{M}oody algebras},
  \url{http://arxiv.org/pdf/0812.5023v1}, 2008.
  
\bibitem[Spa82]{Spaltenstein82}  
Nicolas Spaltenstein, \emph{Classes unipotentes et sous-groupes de {B}orel}, Lecture Notes in Mathematics, vol. 946, Springer-Verlag, Berlin-New York, 1982. 

\bibitem[Spr78]{Springer78}
Tonny Albert Springer, \emph{A construction of representations of {W}eyl groups}, Invent. Math. \textbf{44} (1978), no.~3, 279--293. 

\bibitem[Spr69]{Springer69}
\bysame, \emph{The unipotent variety of a semi-simple group}, Algebraic Geometry (Internat. Colloq., Tata Inst. Fund. Res., Bombay, 1968), Oxford Univ. Press, London, 373--391. 

\bibitem[Spr76]{MR0442103}
\bysame, \emph{Trigonometric sums, {G}reen functions of finite groups and representations of {W}eyl groups}, Invent. Math. \textbf{36} (1976), 173--207. 

\bibitem[Ste74]{Steinberg74}
Robert Steinberg, \emph{Conjugacy classes in algebraic groups}, Lecture Notes in Mathematics, Berlin-New York: Springer-Verlag \textbf{366} (1974). 

\bibitem[Ste76]{Steinberg76}
\bysame, \emph{On the desingularization of the unipotent variety}, Invent. Math. \textbf{36} (1976), 209--224. 

\bibitem[SW11]{Stroppel-Webster-quiver-schur-algebras-q-fock-space}
Catharina Stroppel and Ben Webster, \emph{Quiver {S}chur algebras and
  $q$-{F}ock space}, \url{http://arxiv.org/pdf/1110.1115}, 2011.
  

\end{thebibliography}
\end{document}